\documentclass[11pt, a4paper, reqno]{amsart}
\newcommand{\N}{\mathbb{N}}

\usepackage[utf8]{inputenc}
\usepackage{amscd}
\usepackage{amssymb}
\usepackage{newunicodechar}
\usepackage[utf8]{inputenc}
\usepackage{amsthm}
\usepackage[table]{xcolor}
\usepackage{array}
\usepackage{listings}
\usepackage[colorlinks=true,urlcolor=black]{hyperref}  
\usepackage{tikz}

\usetikzlibrary{calc}

\usepackage{amsmath}
\usepackage{graphicx,enumitem}
\setlist[enumerate]{topsep=0pt, itemsep=0pt, parsep=0pt, partopsep=0pt}
\usepackage{latexsym}

\usepackage{hyperref}
\usepackage[OT1]{fontenc}
\usepackage{bm}
\usepackage{upref}
\usepackage{color}
\usepackage{xcolor}
\usepackage{subcaption}
\usepackage[utf8]{inputenc}
\usepackage[T1]{fontenc}
\usepackage{adjustbox}
\usepackage{tikz}
\usepackage{tipa}
\usepackage[OT1]{fontenc}
\usetikzlibrary{calc}
\usepackage[margin=1in]{geometry}

\usetikzlibrary{
    arrows.meta,
    calc,
    shadows.blur
}
\usetikzlibrary{decorations.pathreplacing,positioning}
\usepackage{tikz-cd}
\usepackage{listings}
\usepackage{xcolor}
\newtheorem{question}{Question}
\usepackage[T1]{fontenc}
\lstdefinelanguage{SageMath}{
  language=Python,
  morekeywords={
    GF, PolynomialRing, vector_space_dimension,
    ideal, gens, hom
  },
  sensitive=true,
  morecomment=[l]{\#},
  morestring=[b]",
  morestring=[b]'
}

\theoremstyle{plain}
\newtheorem{theorem}{Theorem}[section]
\newtheorem{prop}[theorem]{Proposition}
\newtheorem{lemma}[theorem]{Lemma}
\newtheorem{corollary}[theorem]{Corollary}

\theoremstyle{definition}
\newtheorem{defi}[theorem]{Definition}
\newtheorem{ex}[theorem]{Example}

\theoremstyle{remark}
\newtheorem{remark}[theorem]{Remark}

\usepackage{booktabs}
 \usepackage{mathrsfs}
 
\theoremstyle{plain}

\numberwithin{equation}{section}
\allowdisplaybreaks
\setlist{leftmargin=15pt,labelindent=15pt}
\setlist[enumerate]{wide=0pt, leftmargin=15pt, labelwidth=15pt, align=left}
\setlist[enumerate]{label=(\roman*),topsep=0pt,itemsep=2pt,parsep=0pt}
\newcommand{\Fq}{\mathbb{F}_q}

\DeclareMathOperator{\Par}{Par}
\newcommand{\C}{\mathscr{C}}

\setlist[enumerate]{label=(\roman*)}

\makeatletter
\renewcommand{\subsection}{\@startsection{subsection}{2}{\z@}%
  {-3.25ex \@plus -1ex \@minus -.2ex}%
  {1.5ex \@plus .2ex}%
  {\normalfont\bfseries}}
\makeatother

\title[Affine Varieties over Finite Fields and Polygonal Number Sequences]{Rational Points on Affine Varieties over Finite Fields and Polygonal Number Sequences}

\author[Sakshi Gupta]{Sakshi Gupta}
\author[Anit Kuckian]{Anit Kuckian}
\author[Indranath Sengupta]{Indranath Sengupta$^{*}$}
\address[] {\newline
Indian Institute of Technology Gandhinagar, 
    Palaj, Gandhinagar, Gujarat 382355, India}
\email[] {sakshi.gupta2693@gmail.com, anit.kuckian@alumni.iitgn.ac.in, indranathsg@iitgn.ac.in}
\thanks{$^{*}$Corresponding Author}
\keywords{Polygonal numbers, double Danielewski surfaces, finite fields, rational points, polynomial systems}  
\subjclass[2010]{Primary 11T06; Secondary 14G05, 11B83}

\begin{document}

\vspace{-9mm}
\begin{abstract}
Authors, in an earlier work, proved that prime power subsequences of the polygonal numbers corresponding to $4$, $6$, $8$, and $10$ arise naturally as the numbers of $\mathbb{F}_q$-rational points of certain double Danielewski surfaces. In this work, authors generalize the previous construction and give a new family of affine varieties over finite fields such 
that the number of $\mathbb{F}_q$-rational points of these varieties give the prime power subsequence of every polygonal number sequence.
\end{abstract}

\maketitle

\section{Introduction}

Polygonal numbers form one of the classical families of figurate numbers,
obtained by arranging points in polygonal patterns; see, for example,
\cite{figurate}. Triangular, square, pentagonal, and hexagonal numbers are the
first familiar cases. The polygonal-number sequences are indexed by integers $m\geq3$, with $m=3,4,5,$ and $6$ corresponding to the triangular, square, pentagonal, and hexagonal numbers, respectively. 
Centered polygonal numbers arise from a related
construction in which a central point is surrounded by successive polygonal
layers. Throughout this paper, the term \emph{polygonal numbers} refers to the
classical polygonal numbers; centered polygonal numbers will always be
referred to explicitly. Although these sequences have a geometric and
combinatorial origin, their arithmetic properties have also been studied in
number theory.

\medskip
The present work is motivated by a connection between polygonal numbers and
double Danielewski surfaces over finite fields. These surfaces arose in
the study of the cancellation problem in affine algebraic geometry \cite{Gupta2019}.

\medskip
Let $q$ be a prime power, let $\Fq$ denote the finite field with $q$
elements, and let $\Fq^{\times}=\Fq\setminus\{0\}$. We write
$\mathbb N=\{1,2,\ldots\}$. Over $\Fq$, a double Danielewski surface is
defined by equations
$$
X^{d_1}Y=P(X,Z),
\qquad
X^{d_2}T=Q(X,Y,Z),
$$
where $d_1,d_2\in\mathbb N$, $P\in\Fq[X,Z]$, and
$Q\in\Fq[X,Y,Z]$, subject to suitable degree and monicity conditions.

\medskip
In our earlier work \cite{ddsfq}, we studied rational points on Danielewski
and double Danielewski surfaces over finite fields. 
For a double Danielewski surface, the solutions with nonzero $X$-coordinate contribute $q(q-1)$ points, while the solutions with $X=0$ are counted using the simultaneous zeros of $P(0,Z)$ and $Q(0,Y,Z)$.
For suitable choices of
$P$ and $Q$, Theorem~7.2 of \cite{ddsfq} gives example of double Danielewski surfaces
with
$$
q^2,\qquad
2q^2-q,\qquad
3q^2-2q,\qquad
4q^2-3q
$$
$\Fq$-rational points. These are precisely the square, hexagonal, octagonal,
and decagonal numbers evaluated at $q$. This observation leads to the following question.

\begin{question}
Can one construct polynomial families whose $\Fq$-rational point counts realize the prime-power subsequences of polygonal numbers and, separately, of centered polygonal numbers?
\end{question}

\medskip
The aim of this paper is to answer this question by introducing a familly of polynomial systems based on the form of the double Danielewski surfaces. For each such system, setting
$X=0$ in the polynomials gives an associated
polynomial family. If $N_0$ denotes the number of common zeros of this
associated family over $\Fq$, then the rational-point count has the form
$$
q(q-1)+qN_0.
$$
Hence it is enough to construct associated families with the required values
of $N_0$.

\medskip
The four polygonal-number point counts obtained in \cite{ddsfq} motivate the initial cases $m=4,6,8,$ and $10$ in our construction. We construct new associated families for these cases and use them to obtain the remaining even cases recursively. The odd cases are obtained by adding one more equation. We then use the same method for centered polygonal numbers, with different initial associated families and an extra step to recover the central point. In this way, we obtain the prime-power subsequences of polygonal and centered polygonal numbers from explicit polynomial families.

\medskip
The paper is organized as follows: In Section~\ref{sec:preliminaries}, we
recall polygonal and centered polygonal numbers and define their prime-power
subsequences. In Section~\ref{sec:point-counting-framework}, we introduce the
general polynomial families used throughout the paper and prove the main
point-count formula (Theorem~\ref{thm q(q-1)+qN_0}). We also show that, for a family of $n$ polynomials, the value of $m$ is bounded above in terms of $n$ (Corollary~\ref{cor:polygonal-bound}).

\medskip
In Section~\ref{sec:even-polygonal-surfaces}, we construct the associated
families recursively from the four initial cases corresponding to $m=4,6,8,$ and $10$, and
obtain the prime-power subsequences for all even polygonal numbers
(Theorem~\ref{thm:even-polygonal-realization}).
Section~\ref{sec:odd-polygonal-surfaces} gives the corresponding construction
for odd polygonal numbers using an additional equation
(Theorem~\ref{thm:odd-polygonal-realization}). In
Section~\ref{sec:centered-polygonal-surfaces}, we consider centered polygonal numbers, first without the central point for even and odd $m$
(Theorems~\ref{thm:centered-even-realization} and
\ref{thm:centered-odd-realization}), and then recover the full centered
polygonal numbers (Theorem~\ref{thm:centered-polygonal-realization}).

\medskip
As the recursive families become larger for larger values of $m$,
Section~\ref{sec:symbolic-codes} introduces symbolic codes for writing them
in a compact form. Finally, Section~\ref{sec:computational-resources} gives
SageMath \cite{sagemath} codes for constructing the polynomial families
and Macaulay2 \cite{M2} code for checking the corresponding point counts. Finally, Section~\ref{sec:further-directions} gives some possible directions for further study.

\section{Preliminaries}
\label{sec:preliminaries}

\subsection{Polygonal numbers}

\begin{defi}
Let $m\geq3$ and $n\geq1$ be integers. The $n$-th $m$-gonal
number is
$$
P_m(n)
=
\sum_{k=0}^{n-1}\bigl(1+k(m-2)\bigr).
$$
Equivalently,
$$
P_m(n)
=
\frac{n\bigl((m-2)n-m+4\bigr)}{2}.
$$
\end{defi}

The triangular and square cases are illustrated in
Figures~\ref{fig:triangular-numbers}
and~\ref{fig:square-numbers}, respectively.

\begin{figure}[htbp]
\centering

\begin{tikzpicture}[scale=0.67,transform shape]

\colorlet{tline}{gray!70}
\def\d{1.12}
\def\h{0.96}
\def\dotsize{6pt}

\newcommand{\OddTriBall}[3]{%
  \node[
    circle,
    shade,
    ball color=#1,
    inner sep=0pt,
    minimum size=\dotsize
  ] at ({#2},{#3}) {};
}

\newcommand{\OddTriNumber}[3]{%
  \begin{scope}[shift={(#1,0)}]
    \ifnum#2=1
      \OddTriBall{red}{0}{0}
      \node at (0,-2.0) {#3};
    \else
      \pgfmathtruncatemacro{\N}{#2}

      \foreach \m in {\N,...,2} {
        \pgfmathtruncatemacro{\depth}{\N-\m+1}
        \pgfmathtruncatemacro{\j}{\N-\m}
        \pgfmathtruncatemacro{\Mm}{\m-1}

        \def\shellcolor{red}
        \ifnum\depth=1\relax
          \def\shellcolor{red}
        \fi
        \ifnum\depth=2\relax
          \def\shellcolor{blue}
        \fi
        \ifnum\depth=3\relax
          \def\shellcolor{yellow!85!orange}
        \fi
        \ifnum\depth=4\relax
          \def\shellcolor{green!70!black}
        \fi
        \ifnum\depth=5\relax
          \def\shellcolor{orange}
        \fi

        \ifnum\m>1
          \foreach \k
          [evaluate=\k as \kp using int(\k+1)]
          in {0,...,\numexpr\m-2\relax} {
            \draw[tline,line width=1pt]
              ({\d*\j},{(\Mm/2-\k)*\h})
              --
              ({\d*\j},{(\Mm/2-\kp)*\h});
          }
        \fi

        \foreach \k in {0,...,\Mm} {
          \OddTriBall{\shellcolor}
          {\d*\j}
          {(\Mm/2-\k)*\h}
        }

        \draw[tline,line width=1pt]
          ({\d*\j},{(\m-1)/2*\h})
          --
          ({\d*(\j+1)},{(\m-2)/2*\h});

        \draw[tline,line width=1pt]
          ({\d*\j},{-(\m-1)/2*\h})
          --
          ({\d*(\j+1)},{-(\m-2)/2*\h});
      }

      \OddTriBall{orange}{\d*(\N-1)}{0}

      \node at ({\d*(\N-1)/2},-2.0) {#3};
    \fi
  \end{scope}
}

\OddTriNumber{0.0}{1}{$P_3(1)=1$}
\OddTriNumber{1.8}{2}{$P_3(2)=3$}
\OddTriNumber{4.4}{3}{$P_3(3)=6$}
\OddTriNumber{8.0}{4}{$P_3(4)=10$}
\OddTriNumber{13.5}{5}{$P_3(5)=15$}

\end{tikzpicture}

\vspace{-0.3cm}

$$
\vspace{-0.3cm}
P_3(n)=\frac{n(n+1)}{2}.
$$

\caption{The first five triangular numbers.}
\label{fig:triangular-numbers}
\vspace{0.5em}
\end{figure}

\begin{figure}[htbp]
\centering

\begin{tikzpicture}[scale=0.75,transform shape]

\colorlet{sline}{gray!70}
\def\dotsize{6pt}

\newcommand{\SquareBall}[3]{%
  \node[
    circle,
    shade,
    ball color=#1,
    inner sep=0pt,
    minimum size=\dotsize
  ] at #2 {};
}

\newcommand{\SquareNestThreeD}[3]{%
  \begin{scope}[shift={(#1,0)}]
    \ifnum#2=1
      \SquareBall{red}{(0,0)}{}
      \node at (0,-1.9) {#3};
    \else
      \coordinate (L0) at (0,0);
      \coordinate (T0) at (1.35,1.25);
      \coordinate (R0) at (2.70,0);
      \coordinate (B0) at (1.35,-1.25);

      \pgfmathtruncatemacro{\N}{#2}

      \foreach \m in {\N,...,2} {
        \pgfmathsetmacro{\s}{(\m-1)/(\N-1)}
        \pgfmathtruncatemacro{\depth}{\N-\m+1}

        \def\shellcolor{red}
        \ifnum\depth=1\relax
          \def\shellcolor{red}
        \fi
        \ifnum\depth=2\relax
          \def\shellcolor{blue}
        \fi
        \ifnum\depth=3\relax
          \def\shellcolor{yellow!85!orange}
        \fi
        \ifnum\depth=4\relax
          \def\shellcolor{green!70!black}
        \fi
        \ifnum\depth=5\relax
          \def\shellcolor{orange}
        \fi

        \coordinate (L) at ($(R0)!\s!(L0)$);
        \coordinate (T) at ($(R0)!\s!(T0)$);
        \coordinate (R) at (R0);
        \coordinate (B) at ($(R0)!\s!(B0)$);

        \draw[sline,line width=1pt]
        (L)--(T)--(R)--(B)--cycle;

        \pgfmathtruncatemacro{\mm}{\m-2}

        \foreach \P/\Q in {L/T,T/R,R/B,B/L} {
          \foreach \k in {0,...,\mm} {
            \pgfmathsetmacro{\t}{\k/(\m-1)}
            \SquareBall{\shellcolor}
            {($(\P)!\t!(\Q)$)}
            {}
          }
        }
      }

      \node at (1.35,-1.9) {#3};
    \fi
  \end{scope}
}

\SquareNestThreeD{0.0}{1}{$P_4(1)=1$}
\SquareNestThreeD{1.8}{2}{$P_4(2)=4$}
\SquareNestThreeD{6.2}{3}{$P_4(3)=9$}
\SquareNestThreeD{10.1}{4}{$P_4(4)=16$}
\SquareNestThreeD{14.2}{5}{$P_4(5)=25$}

\end{tikzpicture}

\vspace{-0.3cm}

$$
P_4(n)=n^2.
\vspace{-0.3cm}
$$

\caption{The first five square numbers.}
\label{fig:square-numbers}
\end{figure}

Let
$$
\mathcal{Q}
=
\left\{
q:
q=p^r
\text{ for some prime }p
\text{ and integer }r\geq1
\right\}
$$
be the set of all prime powers, written in increasing order as
$$
\mathcal{Q}
=
\{q_1<q_2<q_3<\cdots\}.
$$

\begin{defi}
For a fixed integer $m\geq3$, prime-power subsequence of the
$m$-gonal numbers is
$$
(\mathcal{P}_m(q))_{q\in \mathcal{Q}}
=
\bigl(
P_m(q_1),
P_m(q_2),
P_m(q_3),
\ldots
\bigr).
$$

\end{defi}

\subsection{Centered polygonal numbers}


\begin{defi}\label{def:centered-polygonal-number}
Let $m\geq3$ and $n\geq1$ be integers. The $n$-th centered
$m$-gonal number is
$$
C_m(n)
=
1+m+2m+\cdots+(n-1)m.
$$
Equivalently,
$$
C_m(n)
=
1+\frac{mn(n-1)}{2}.
$$
\end{defi}

This construction is illustrated for centered triangular and centered
square numbers in Figures~\ref{fig:centered-triangular-numbers}
and~\ref{fig:centered-square-numbers}, respectively.

\begin{figure}[htbp]
\centering

\begin{tikzpicture}[scale=0.72,transform shape]

\colorlet{ctline}{gray!70}
\def\ctdotsize{6pt}

\newcommand{\CenteredTriBall}[2]{%
  \node[
    circle,
    shade,
    ball color=#1,
    inner sep=0pt,
    minimum size=\ctdotsize
  ] at #2 {};
}

\newcommand{\CenteredTriShell}[4]{%
  \coordinate (#1A) at (90:#2);
  \coordinate (#1B) at (210:#2);
  \coordinate (#1C) at (330:#2);

  \draw[ctline,line width=1pt]
  (#1A)--(#1B)--(#1C)--cycle;

  \pgfmathtruncatemacro{\ctlast}{#3-1}

  \foreach \P/\Q in {A/B,B/C,C/A} {
    \foreach \j in {0,...,\ctlast} {
      \pgfmathsetmacro{\t}{\j/#3}
      \CenteredTriBall{#4}
      {($(#1\P)!\t!(#1\Q)$)}
    }
  }
}

\newcommand{\CenteredTriNumber}[3]{%
  \begin{scope}[shift={(#1,0)}]

    \pgfmathtruncatemacro{\N}{#2}

    \CenteredTriBall{orange}{(0,0)}

    \ifnum\N>1
      \pgfmathtruncatemacro{\Nm}{\N-1}

      \foreach \k in {1,...,\Nm} {
        \pgfmathtruncatemacro{\depth}{\N-\k}

        \def\shellcolor{red}
        \ifnum\depth=1\relax
          \def\shellcolor{red}
        \fi
        \ifnum\depth=2\relax
          \def\shellcolor{blue}
        \fi
        \ifnum\depth=3\relax
          \def\shellcolor{yellow!85!orange}
        \fi
        \ifnum\depth=4\relax
          \def\shellcolor{green!70!black}
        \fi
        \ifnum\depth=5\relax
          \def\shellcolor{violet}
        \fi

        \pgfmathsetmacro{\radius}{0.58*\k}

        \CenteredTriShell
        {CT#2S\k}
        {\radius}
        {\k}
        {\shellcolor}
      }
    \fi

    \node at (0,-1.85) {#3};

  \end{scope}
}

\CenteredTriNumber{0.0}{1}{$C_3(1)=1$}
\CenteredTriNumber{1.8}{2}{$C_3(2)=4$}
\CenteredTriNumber{4.8}{3}{$C_3(3)=10$}
\CenteredTriNumber{9.0}{4}{$C_3(4)=19$}
\CenteredTriNumber{14.6}{5}{$C_3(5)=31$}

\end{tikzpicture}

\vspace{-0.3cm}

$$
\vspace{-0.3cm}
C_3(n)
=
1+\frac{3n(n-1)}{2}.
$$

\caption{The first five centered triangular numbers.}
\label{fig:centered-triangular-numbers}
\end{figure}

\begin{figure}[htbp]
\centering

\begin{tikzpicture}[scale=0.68,transform shape]

\colorlet{csline}{gray!70}
\def\csdotsize{6pt}

\newcommand{\CenteredSquareBall}[2]{%
  \node[
    circle,
    shade,
    ball color=#1,
    inner sep=0pt,
    minimum size=\csdotsize
  ] at #2 {};
}

\newcommand{\CenteredSquareShell}[4]{%
  \coordinate (#1T) at (90:#2);
  \coordinate (#1R) at (0:#2);
  \coordinate (#1B) at (270:#2);
  \coordinate (#1L) at (180:#2);

  \draw[csline,line width=1pt]
  (#1T)--(#1R)--(#1B)--(#1L)--cycle;

  \pgfmathtruncatemacro{\cslast}{#3-1}

  \foreach \P/\Q in {T/R,R/B,B/L,L/T} {
    \foreach \j in {0,...,\cslast} {
      \pgfmathsetmacro{\t}{\j/#3}
      \CenteredSquareBall{#4}
      {($(#1\P)!\t!(#1\Q)$)}
    }
  }
}

\newcommand{\CenteredSquareNumber}[3]{%
  \begin{scope}[shift={(#1,0)}]

    \pgfmathtruncatemacro{\N}{#2}

    \CenteredSquareBall{orange}{(0,0)}

    \ifnum\N>1
      \pgfmathtruncatemacro{\Nm}{\N-1}

      \foreach \k in {1,...,\Nm} {
        \pgfmathtruncatemacro{\depth}{\N-\k}

        \def\shellcolor{red}
        \ifnum\depth=1\relax
          \def\shellcolor{red}
        \fi
        \ifnum\depth=2\relax
          \def\shellcolor{blue}
        \fi
        \ifnum\depth=3\relax
          \def\shellcolor{yellow!85!orange}
        \fi
        \ifnum\depth=4\relax
          \def\shellcolor{green!70!black}
        \fi
        \ifnum\depth=5\relax
          \def\shellcolor{violet}
        \fi

        \pgfmathsetmacro{\radius}{0.55*\k}

        \CenteredSquareShell
        {CS#2S\k}
        {\radius}
        {\k}
        {\shellcolor}
      }
    \fi

    \node at (0,-3.0) {#3};

  \end{scope}

}
\CenteredSquareNumber{0.0}{1}{$C_4(1)=1$}
\CenteredSquareNumber{2.2}{2}{$C_4(2)=5$}
\CenteredSquareNumber{5.8}{3}{$C_4(3)=13$}
\CenteredSquareNumber{10.6}{4}{$C_4(4)=25$}
\CenteredSquareNumber{16.8}{5}{$C_4(5)=41$}

\end{tikzpicture}

\vspace{-0.3cm}

$$
\vspace{-0.3cm}
C_4(n)
=
1+2n(n-1)
=
n^2+(n-1)^2.
$$

\caption{The first five centered square numbers.}
\label{fig:centered-square-numbers}
\end{figure}

The prime-power subsequence  of the centered $m$-gonal numbers is
$$
(\mathcal{C}_m(q))_{q \in \mathcal{Q}}
=
\bigl( C_m(q_1), C_m(q_2), C_m(q_3), \ldots \bigr).
$$

\section{A general point-counting framework}
\label{sec:point-counting-framework}

We now introduce a general family of polynomial systems whose $\mathbb F_q$-rational points will be used to realize the prime power subsequences of polygonal and centered polygonal numbers.
\medskip

Let $q$ be a prime power and let $n\geq1$. For $1\leq i\leq n$, let
$
H_i\in\mathbb F_q[X,Y_0,\ldots,Y_{i-1}]
$
and let $d_i$ be a positive integer. Define
\begin{equation}
G_i
=
X^{d_i}Y_i-H_i(X,Y_0,\ldots,Y_{i-1}),
\qquad
1\leq i\leq n,
\label{eq:defining-polynomials}
\end{equation}
and let
$
\mathcal G=(G_1,\ldots,G_n)
$ denote the family of $n$ polynomials. 

Let $\mathcal F=(F_1,\ldots,F_n)$ be the family obtained from
$\mathcal G$ by evaluating each $H_i$ at $X=0$, where
\begin{equation}
F_i(Y_0,\ldots,Y_{i-1})
=
H_i(0,Y_0,\ldots,Y_{i-1}),
\qquad 1\leq i\leq n.
\label{eq:associated-family}
\end{equation}
Suppose $H_{i} = A_{0} + A_{1}X + \cdots + A_{e}X^{e}$, then $F_{i} = A_{0}$ and 
it follows that $H_i-F_i$ is divisible by $X$. We may write
$
H_i=F_i+XR_i
$, 
for some
$
R_i\in\mathbb F_q[X,Y_0,\ldots,Y_{i-1}]. 
$
We call $\mathcal F$ the \textit{associated family} of $\mathcal G$, where 
\begin{equation}
G_i
=
X^{d_i}Y_i
-
F_i(Y_0,\ldots,Y_{i-1})
-
XR_i(X,Y_0,\ldots,Y_{i-1}),
\qquad
1\leq i\leq n.
\label{eq:Gi-FR}
\end{equation}

Conversely, given such a family $\mathcal F$,
choosing positive integers $d_i$ and polynomials $R_i$ in
\eqref{eq:Gi-FR} gives a family $\mathcal G$ having
$\mathcal F$ as its associated family.
Clearly, the choice of $\mathcal G$ is not unique.

\smallskip
Throughout the paper, capital letters
$X,Y_0,Y_1,Y_2,\ldots$ denote variables, and the corresponding
lowercase letters $x,y_0,y_1,y_2,\ldots$ denote their values in
$\mathbb F_q$.

\smallskip
We associate to $\mathcal G$ the set
\begin{equation}
S_{\mathcal G}(\mathbb F_q)
=
\left\{
(x,y_0,y_1,\ldots,y_n)\in\mathbb F_q^{n+2}
:
G_i(x,y_0,y_1,\ldots,y_i)=0
\text{ for all }1\leq i\leq n
\right\} \subseteq \Fq^{n+2}.
\label{eq:SG}
\end{equation}

Note that, $G_i(0,y_0,y_1,\ldots,y_i)= F_i(y_0,y_1,\ldots,y_{i-1})$, $1\leq i\leq n.
$
\medskip

For $n=2$, if $H_1(X,Y_0)$ is monic in $Y_0$ with
$\deg_{Y_0}H_1\geq2$, and $H_2(X,Y_0,Y_1)$ is monic in $Y_1$ with
$\deg_{Y_1}H_2\geq2$, then the polynomial family
$
\left(
X^{d_1}Y_1-H_1(X,Y_0),\,
X^{d_2}Y_2-H_2(X,Y_0,Y_1)
\right)
$
defines a double Danielewski surface. Thus, we call $\mathcal{G} = (G_1,\ldots,G_n)$ the {$m$-gonal Danielewski variety.}


\begin{theorem}\label{thm q(q-1)+qN_0}
Let $ \mathcal{G}=(G_1,\ldots,G_n) $ be a family of $n$ polynomials satisfying \eqref{eq:defining-polynomials}, and let $S_{\mathcal{G}}(\mathbb{F}_q)$ be the set defined in \eqref{eq:SG}. Define $$ N_0(\mathcal{G}) = \#\left\{ (y_0,y_1,\ldots,y_{n-1})\in\mathbb{F}_q^n : F_i(y_0,y_1,\ldots,y_{i-1})=0 \text{ for all }1\leq i\leq n \right\}. $$ Then $$ \#S_{\mathcal{G}}(\mathbb{F}_q) = q(q-1)+qN_0(\mathcal{G}). $$ 
\end{theorem}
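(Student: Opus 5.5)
We split $S_{\mathcal G}(\mathbb F_q)$ according to whether the first coordinate $x$ is zero or not. The two pieces are disjoint and cover the set, so $\#S_{\mathcal G}(\mathbb F_q)$ is the sum of their sizes. We compute each piece directly from the triangular shape of the system \eqref{eq:defining-polynomials}: each $G_i$ involves $Y_i$ only through the term $X^{d_i}Y_i$, and otherwise depends only on $X,Y_0,\ldots,Y_{i-1}$.

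\emph{Points with $x\neq0$.} Fix $x\in\mathbb F_q^{\times}$ and $y_0\in\mathbb F_q$ arbitrarily. Since $x^{d_i}$ is a unit, the equation $G_1=0$ determines $y_1=x^{-d_1}H_1(x,y_0)$ uniquely. Inductively, once $y_0,\ldots,y_{i-1}$ are fixed, $G_i=0$ forces
$$
y_i=x^{-d_i}H_i(x,y_0,\ldots,y_{i-1}),
$$
and any such choice satisfies $G_i=0$. So for each pair $(x,y_0)\in\mathbb F_q^{\times}\times\mathbb F_q$ there is exactly one point of $S_{\mathcal G}(\mathbb F_q)$, and conversely every point with $x\ne0$ arises this way. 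This piece therefore has exactly $(q-1)q$ elements. An induction on $i$ makes the bijection precise.

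\emph{Points with $x=0$.} Here $G_i(0,y_0,\ldots,y_i)=F_i(y_0,\ldots,y_{i-1})$, as noted after \eqref{eq:SG}. The conditions become $F_i(y_0,\ldots,y_{i-1})=0$ for $1\le i\le n$, and none of them involves $y_n$. The $x=0$ piece is therefore in bijection with $\{\text{common zeros of }\mathcal F \text{ in } \mathbb F_q^n\}\times\mathbb F_q$, where the last factor is the free coordinate $y_n$. Its size is $qN_0(\mathcal G)$. The variables $y_1,\ldots,y_{n-1}$ do appear in later $F_j$, so they are constrained and not free; only $y_n$ is free.

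Adding the two counts gives $q(q-1)+qN_0(\mathcal G)$. The argument is elementary. The only point needing care is the observation that $y_n$ appears in no $F_i$, which is what produces the factor $q$ rather than an independent count. I do not expect any real obstacle beyond writing the inductive bijection in the $x\neq0$ case cleanly.
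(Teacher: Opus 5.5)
Your proposal is correct and follows the paper's proof essentially verbatim. Like the paper, you split by $x\neq0$ versus $x=0$, solve $y_i=x^{-d_i}H_i(x,y_0,\ldots,y_{i-1})$ successively to get $q(q-1)$ points in the first case, and in the second case reduce to the common zeros of $\mathcal F$ with $y_n$ free; the identity is strictly $G_i(0,y_0,\ldots,y_i)=-F_i(y_0,\ldots,y_{i-1})$, a sign that does not affect the zero sets.
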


\begin{proof}
We decompose $S_{\mathcal{G}}(\mathbb{F}_q)$ into two disjoint parts
according as $x\neq0$ or $x=0$.

First suppose that
$
x\in\mathbb F_q^\times.
$
Since $x^{d_i}\neq0$, the equation
$$
y_i
=
x^{-d_i}H_i(x,y_0,\ldots,y_{i-1}),
\qquad
1\leq i\leq n.
$$
determines $y_i$ uniquely once
$
x,y_0,\ldots,y_{i-1}
$
are known. Thus, after choosing
$
x\in\mathbb F_q^\times
$
and
$
y_0\in\mathbb F_q,
$
the values $y_1,\ldots,y_n$ are determined successively.
Hence there are $q(q-1)$ points with $x\neq0$.

\medskip
Now suppose that $x=0$. Since $d_i\geq1$, the defining
equations become
$$
F_i(y_0,y_1,\ldots,y_{i-1})=0,
\qquad 1\leq i\leq n.
$$
By definition, there are
$N_0(\mathcal{G})$
choices for $(y_0,y_1,\ldots,y_{n-1})$. The variable $y_n$
is free and therefore has $q$ possible values. Hence there are
$
qN_0(\mathcal{G})
$
points with $x=0$. Combining the two cases we get $\#S_{\mathcal{G}}(\mathbb{F}_q)
=
q(q-1)+qN_0(\mathcal{G})$.
\end{proof}

\begin{corollary}
\label{cor:x-nonzero-fiber}
Under the hypotheses of Theorem~\ref{thm q(q-1)+qN_0}, the projection
$$
\left\{
(x,y_0,\ldots,y_n)\in S_{\mathcal G}(\mathbb F_q):x\neq0
\right\}
\longrightarrow
\mathbb F_q^\times\times\mathbb F_q,
$$
given by
$$
(x,y_0,\ldots,y_n)\longmapsto(x,y_0),
$$
is a bijection.
\end{corollary}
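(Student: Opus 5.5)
The plan is to extract the bijection directly from the first half of the proof of Theorem~\ref{thm q(q-1)+qN_0}, where the points with $x\neq0$ were counted. Let
$$
S^{\times}=\left\{(x,y_0,\ldots,y_n)\in S_{\mathcal G}(\mathbb F_q):x\neq0\right\},
$$
and let $\pi\colon S^{\times}\to\mathbb F_q^\times\times\mathbb F_q$ be the projection $(x,y_0,\ldots,y_n)\mapsto(x,y_0)$. The map is well defined because $x\neq0$ on $S^{\times}$. The argument is to construct an explicit inverse $\sigma\colon\mathbb F_q^\times\times\mathbb F_q\to S^{\times}$ by recursion on $i$.

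Given $(x,y_0)$ with $x\in\mathbb F_q^\times$, I define recursively
$$
y_i=x^{-d_i}H_i(x,y_0,\ldots,y_{i-1}),\qquad 1\leq i\leq n.
$$
This is legitimate because $x^{d_i}$ is invertible in $\mathbb F_q$, and each $y_i$ depends only on values already fixed. By construction $G_i(x,y_0,\ldots,y_i)=x^{d_i}y_i-H_i(x,y_0,\ldots,y_{i-1})=0$ for every $i$. Hence $\sigma(x,y_0)=(x,y_0,y_1,\ldots,y_n)$ lies in $S^{\times}$, and clearly $\pi\circ\sigma$ is the identity.

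For $\sigma\circ\pi=\mathrm{id}$, take any point $(x,y_0,\ldots,y_n)\in S^{\times}$. I show by induction on $i$ that its coordinate $y_i$ equals the value produced by the recursion from $(x,y_0)$. The case $i=0$ is trivial. For the inductive step, the equation $G_i=0$ together with $x\neq0$ forces $y_i=x^{-d_i}H_i(x,y_0,\ldots,y_{i-1})$, and by the inductive hypothesis the earlier coordinates agree with the recursively constructed ones. Thus $\pi$ is injective and surjective, so it is a bijection.

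There is no real obstacle here. The only point needing care is to note that $x\neq0$ is exactly what allows division by $x^{d_i}$. I would also remark that, as a consistency check, the bijection recovers the count $q(q-1)$ used in Theorem~\ref{thm q(q-1)+qN_0}.
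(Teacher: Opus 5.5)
Your proof is correct and is essentially the paper's argument. The paper simply cites the $x\neq0$ part of the proof of Theorem~\ref{thm q(q-1)+qN_0}, where $y_1,\ldots,y_n$ are determined successively and uniquely from $(x,y_0)$ by $y_i=x^{-d_i}H_i(x,y_0,\ldots,y_{i-1})$; you have made that explicit by writing down the inverse map and checking both compositions.
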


\begin{proof}
This follows from the $x\neq0$ part of the proof of
Theorem~\ref{thm q(q-1)+qN_0}.
\end{proof}
Theorem~\ref{thm q(q-1)+qN_0} shows that the point count depends on
$\mathcal G$ only through $N_0(\mathcal G)$, which is determined by its
associated family $\mathcal F=(F_1,\ldots,F_n)$. Hence different defining families with
the same associated family have the same $\mathbb F_q$-rational point
count. Thus, to obtain a prescribed point count, it is enough to
construct an associated family $\mathcal F$ with the required number
of $\Fq$-rational points.

\subsection{Bounds on $m$}

We now derive the bounds implied by Theorem~\ref{thm q(q-1)+qN_0}.
Since $N_0(\mathcal{G})$ counts a subset of $\mathbb{F}_q^n$,
$$
0\leq N_0(\mathcal{G})\leq q^n.
$$
Hence Theorem~\ref{thm q(q-1)+qN_0} gives
$$
q(q-1)
\leq
\#S_{\mathcal{G}}(\mathbb{F}_q)
\leq
q(q-1)+q^{n+1}.
$$

We now derive the restriction on $m$ imposed by the point-count bound.

\begin{corollary}\label{cor:polygonal-bound}
Suppose that, for some $m\geq3$, there exists a family
$
\mathcal G=(G_1,\ldots,G_n)
$
of $n$ polynomials as in~\eqref{eq:defining-polynomials}
such that
$$
\left(
\#S_{\mathcal G}(\mathbb F_q)
\right)_{q\in\mathcal Q}
=
\left(
P_m(q)
\right)_{q\in\mathcal Q}.
$$
Then $m$ is even and
\begin{equation}
4\leq m\leq2^{n+1}+2.
\label{eq:polygonal-bound}
\end{equation}
\end{corollary}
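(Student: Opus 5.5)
By Theorem~\ref{thm q(q-1)+qN_0}, the hypothesis says that for every prime power $q$,
$$
q(q-1)+qN_0(\mathcal G)=P_m(q)=\frac{q\bigl((m-2)q-m+4\bigr)}{2},
$$
where $N_0(\mathcal G)$ depends on $q$. The plan is to divide by $q$ and solve for $N_0$:
$$
N_0(\mathcal G)=\frac{(m-2)q-m+4}{2}-(q-1)=\frac{(m-4)q-m+6}{2}=\frac{(m-4)(q-1)}{2}+1 .
$$
Everything then follows from two facts: $N_0(\mathcal G)$ is an integer, and $0\le N_0(\mathcal G)\le q^n$.

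\emph{Parity.} Take $q=2$. Then $N_0=\frac{m-4}{2}+1=\frac{m-2}{2}$, and integrality forces $m$ to be even. This is the one step needing care, because for odd $q$ the quantity $(m-4)(q-1)/2$ is always an integer, so odd prime powers give no parity information. The argument therefore has to use $q=2$, which lies in $\mathcal Q$ since the sequence is indexed by all prime powers. (Any power of $2$ would also do.)

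\emph{Lower bound.} Use $N_0\ge0$. If $m=3$, then at $q=2$ we would need $N_0=\tfrac12$, which is already excluded by the parity step; alternatively, for large $q$ we get $N_0=1-(q-1)/2<0$. So $m\ge4$. In fact nonnegativity for large $q$ directly gives $m\ge4$ when combined with integrality.

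\emph{Upper bound.} Use $N_0\le q^n$ at $q=2$: $\frac{m-2}{2}\le 2^n$, i.e. $m\le 2^{n+1}+2$. Larger $q$ give weaker constraints, since $(m-4)(q-1)/2+1\le q^n$ only becomes easier to satisfy as $q$ grows. Hence $q=2$ is the binding case for both the parity and the upper bound, which yields \eqref{eq:polygonal-bound}.
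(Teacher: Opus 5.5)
Your proof is correct and follows essentially the same route as the paper: specialize Theorem~\ref{thm q(q-1)+qN_0} at $q=2$ to get $m=2+2N_0(\mathcal G)$, so $m$ is even and $N_0(\mathcal G)=(m-2)/2\le 2^n$. Your extra remarks (the general formula for $N_0$ in terms of $q$, the large-$q$ check for $m=3$, and the observation that larger $q$ give weaker constraints) are harmless but not needed, since $m\ge4$ follows at once from $m\ge3$ and $m$ even.
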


\begin{proof}
Since $P_m(2)=m$, Theorem~\ref{thm q(q-1)+qN_0} gives
$$
m
=
2+2N_0(\mathcal{G}).
$$
Hence $m$ is even and
$
N_0(\mathcal{G})=(m-2)/2.
$
Since $N_0(\mathcal{G})\leq2^n$, the result follows.
\end{proof}

The condition in Corollary~\ref{cor:polygonal-bound} is obtained from
the case $q=2$. The next proposition shows that, for even $m$, this
condition is sufficient for the point-count bounds to hold for every
prime power $q$.

\begin{prop}\label{prop:q2-suffices}
Let $n\geq 2$ and let $m\geq4$ be even. Then
$$
q^2
\leq
P_m(q)
\leq
q(q-1)+q^{n+1}
$$
for every prime power $q$ if and only if
$$
4\leq m\leq2^{n+1}+2.
$$
\end{prop}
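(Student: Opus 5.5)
Write $P_m(q)=\frac{q((m-2)q-m+4)}{2}$ and set $k=(m-2)/2\geq1$, an integer because $m$ is even. Then
$$
P_m(q)=q\bigl(kq-k+1\bigr)=q(q-1)+kq(q-1)+q-q(q-1)=q(q-1)+q\bigl(k(q-1)+1\bigr)-q(q-1)+q(q-1),
$$
which simplifies to the identity
$$
P_m(q)=q(q-1)+q\bigl((k-1)(q-1)+1\bigr).
$$
I will therefore rewrite both inequalities in terms of $k$ and $q$. The statement then reduces to elementary inequalities in the integer variable $q\geq2$.

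\emph{Lower bound.} The inequality $q^2\leq P_m(q)$ is equivalent to $(k-1)(q-1)+1\geq q$, that is, $(k-2)(q-1)\geq0$. This holds for every $q\geq2$ exactly when $k\geq2$, i.e.\ $m\geq6$. I need to double-check the case $m=4$ against the claimed range. There $k=1$ and $P_4(q)=q^2$, so equality holds. So the identity above must be rechecked: directly, $P_m(q)-q^2=q\bigl((k-1)q-k+1\bigr)-\dots$; the correct computation gives $P_m(q)=kq^2-(k-1)q$. Hence $P_m(q)-q^2=(k-1)q(q-1)\geq0$ if and only if $k\geq1$, i.e.\ $m\geq4$. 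I will use this cleaner form $P_m(q)=kq^2-(k-1)q$, which gives $P_m(q)=q(q-1)+q\cdot k(q-1)$. So the lower bound holds for all $q$ precisely when $m\geq4$.

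\emph{Upper bound.} The inequality $P_m(q)\leq q(q-1)+q^{n+1}$ becomes $k(q-1)\leq q^n$, i.e.\ $k\leq q^n/(q-1)$ for every prime power $q$. For necessity, take $q=2$ to get $k\leq 2^n$, i.e.\ $m\leq 2^{n+1}+2$; this is essentially the argument of Corollary~\ref{cor:polygonal-bound}. For sufficiency, I will show that $f(q)=q^n/(q-1)$ is nondecreasing for integers $q\geq2$ when $n\geq2$. Then $k\leq 2^n=f(2)\leq f(q)$ for all $q$. To prove monotonicity, I will compare $f(q+1)/f(q)=\bigl(\tfrac{q+1}{q}\bigr)^n\cdot\tfrac{q-1}{q}$. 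Using $n\geq2$, this is at least $\tfrac{(q+1)^2(q-1)}{q^3}=\tfrac{q^3+q^2-q-1}{q^3}\geq1$, since $q^2-q-1>0$ for $q\geq2$. Alternatively, I can argue directly that $2^n(q-1)\leq q^n$ because $q^n\geq q^2\cdot 2^{n-2}$ and $q^2\geq 4(q-1)$.

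Combining the two equivalences gives the statement. The only delicate point is the sufficiency of the $q=2$ condition, which is the monotonicity step. This is where the hypothesis $n\geq2$ is used: for $n=1$ the ratio $q/(q-1)$ decreases, and the $q=2$ bound $k\leq2$ fails at $q=3$.
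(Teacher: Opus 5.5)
There is a genuine error in how you reduce the upper bound, though the statement itself survives. The identity you finally adopt, $P_m(q)=q(q-1)+q\cdot k(q-1)$, is false. Its right-hand side is $(k+1)q(q-1)=\tfrac{m}{2}q(q-1)=C_m^\ast(q)$, the centered count, whereas $P_m(q)=kq^2-(k-1)q$. For $m=4$, $q=3$ it gives $12$ instead of $9$.

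The identity you wrote first, $P_m(q)=q(q-1)+q\bigl((k-1)(q-1)+1\bigr)$, was the correct one. You dropped it only because of a slip in the lower bound: dividing $q^2\le P_m(q)$ by $q$ gives $q\le (q-1)+(k-1)(q-1)+1$, i.e.\ $(k-1)(q-1)\ge0$, but you lost the term $q-1$ coming from $q(q-1)$. (The displayed chain in your first paragraph also has a false intermediate equality: its last expression equals $q(q-1)+kq(q-1)+q$.)

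As a result, the upper bound is equivalent to $(k-1)(q-1)+1\le q^n$, not to $k(q-1)\le q^n$. What you actually analyse is the inequality $C_m^\ast(q)\le q(q-1)+q^{n+1}$ of Proposition~\ref{prop:centered-q2-suffices}.

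The argument is repairable.
\begin{itemize}
\item Necessity is unaffected, since at $q=2$ both conditions read $k\le 2^n$ (indeed $P_m(2)=C_m^\ast(2)=m$).
\item For sufficiency, your bound $k(q-1)\le q^n$ is stronger than needed, because $(k-1)(q-1)+1=k(q-1)-(q-2)\le k(q-1)$ for $q\ge2$. You must state this as a one-way implication, not an equivalence.
\end{itemize}

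The paper's route is simpler and needs no monotonicity. It writes $q(q-1)+q^{n+1}-P_m(q)=q(q-1)\bigl(1+q+\cdots+q^{n-1}-\tfrac{m-4}{2}\bigr)$ and uses $\tfrac{m-4}{2}=k-1\le 2^n-1\le 1+q+\cdots+q^{n-1}$.

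This also shows your closing remark is wrong for this statement: the hypothesis $n\ge2$ is not needed. For $n=1$ the allowed values are $m\in\{4,6\}$, and $P_6(q)=2q^2-q=q(q-1)+q^2$ holds with equality for every $q$. The failure at $q=3$ you describe is $C_6^\ast(3)=18>15$, which again concerns the centered count, not $P_m$.
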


\begin{proof}
For the forward implication, the case $q=2$ gives the result.

Conversely, suppose that
$$
4\leq m\leq2^{n+1}+2.
$$
Since
$$
P_m(q)
=
q^2+\frac{m-4}{2}\,q(q-1),
$$
the lower bound holds for every prime power $q$. For the upper bound,
$$
q(q-1)+q^{n+1}-P_m(q)
=
q(q-1)
\left(
1+q+\cdots+q^{n-1}-\frac{m-4}{2}
\right).
$$
Now
$$
\frac{m-4}{2}\leq2^n-1,
$$
while $q\geq2$ gives
$$
1+q+\cdots+q^{n-1}
\geq
1+2+\cdots+2^{n-1}
=
2^n-1.
$$
Hence
$$
\frac{m-4}{2}
\leq
1+q+\cdots+q^{n-1},
$$
and therefore $
P_m(q)
\leq
q(q-1)+q^{n+1}.
$
\end{proof}

\begin{question}
For a prescribed even integer $m$, what is the least integer $n \geq 2$ for which there exists a family
$
\mathcal{G}=(G_1,\ldots,G_n)
$
as in~\eqref{eq:defining-polynomials} such that
$
\#S_{\mathcal{G}}(\mathbb{F}_q)
=
P_m(q)
$
for every prime power $q$?
\end{question}

If
$
m>2^{n+1}+2,
$
then Corollary~\ref{cor:polygonal-bound} shows that a family of $n$ polynomials cannot realize $P_m(q)$ for all $q \in \mathcal{Q}$. Thus more than $n$ polynomials are necessary.

For $m'>m$,
$$
P_{m'}(q)-P_m(q)
=
\frac{m'-m}{2}\,q(q-1).
$$
By Theorem~\ref{thm q(q-1)+qN_0}, passing from $P_m(q)$ to
$P_{m'}(q)$ requires an increase of
$$
\frac{m'-m}{2}(q-1)
$$
in $N_0(\mathcal{G})$. Since adjoining only a free variable multiplies
the point count by $q$, the constructions in the next section adjoin a new variable together with a new  polynomial.

\section{Polynomial families realizing even polygonal numbers}
\label{sec:even-polygonal-surfaces}

We now construct families $\mathcal G$ such that
$\#S_{\mathcal G}(\mathbb F_q)=P_m(q)$ for all even $m\geq4$.

\subsection{Even values of $m$ and the number of polynomials}

For the recursive construction, we group the even values of $m$
according to the least number of polynomials $n \geq 2$ for which the bound
in Corollary~\ref{cor:polygonal-bound} is satisfied. More precisely, for
each even integer $m\geq4$, we consider the least integer $n\geq2$ such
that
$$
m\leq2^{n+1}+2.
$$
Thus values of $m$ with the same least integer $n$ are grouped together.

\begin{defi}\label{def:In}
For each even integer $m\geq4$, define
$$
n(m)
=
\min
\left\{
n\geq2:
m\leq2^{n+1}+2
\right\}.
$$
For $n\geq2$, define
$$
I_n
=
\left\{
m\in2\mathbb{Z}:
m\geq4,~ n(m)=n
\right\}.
$$
\end{defi}

By Corollary~\ref{cor:polygonal-bound}, if a family
$
\mathcal{G}=(G_1,\ldots,G_r)
$
of $r \geq 2$ polynomials as in~\eqref{eq:defining-polynomials}
satisfies
$
\#S_{\mathcal{G}}(\mathbb{F}_q)
=
P_m(q)
$
for every prime power $q$, then
$
r\geq n(m).
$
Thus $n(m)$ is the least integer satisfying the bound.

\begin{lemma}\label{lem:In-description}
We have
$
I_2
=
\{4,6,8,10\},
$
and, for every $n\geq3$,
$$
I_n
=
\left\{
m\in2\mathbb{Z}:
2^n+4\leq m\leq2^{n+1}+2
\right\}.
$$
\end{lemma}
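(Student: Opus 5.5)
The plan is to unwind Definition~\ref{def:In} directly, using that the map $n\mapsto 2^{n+1}+2$ is strictly increasing. Since $m\mapsto$ the condition $m\leq 2^{n+1}+2$ is monotone in $n$ (if it holds for $n$ it holds for all larger $n$), the set $\{n\geq2: m\leq 2^{n+1}+2\}$ is an upward-closed, nonempty (because $2^{n+1}+2\to\infty$) subset of $\{2,3,\ldots\}$. Hence $n(m)=n$ holds exactly when $m\leq 2^{n+1}+2$ and either $n=2$, or $n\geq3$ and $m>2^{n}+2$ (the condition fails at $n-1$). Everything reduces to translating these two inequalities into an interval of even integers.

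For $n=2$: $n(m)=2$ iff $m\leq 2^3+2=10$. Combined with $m$ even and $m\geq4$, this gives $I_2=\{4,6,8,10\}$.

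For $n\geq3$: $n(m)=n$ iff $2^n+2<m\leq 2^{n+1}+2$. Since $m$ is even and $2^n+2$ is even (as $n\geq1$), the strict inequality $m>2^n+2$ is equivalent to $m\geq 2^n+4$. Also $2^n+4\geq 12>4$, so the constraint $m\geq4$ is automatic. This yields $I_n=\{m\in2\mathbb Z: 2^n+4\leq m\leq 2^{n+1}+2\}$.

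There is no real obstacle; the only point needing care is the parity step converting the strict lower bound $m>2^n+2$ into $m\geq 2^n+4$, and noting that the minimum in the definition exists and is characterized by failure at $n-1$ only when $n-1\geq2$, which is why $n=2$ is handled separately.
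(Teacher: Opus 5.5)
Your proposal is correct and follows the same route as the paper. You unwind the minimum in Definition~\ref{def:In} to get $m\le 10$ when $n=2$ and $2^n+2<m\le 2^{n+1}+2$ when $n\ge3$, then use parity to sharpen the strict lower bound to $m\ge 2^n+4$; your remarks that the condition is monotone in $n$ and that $m\ge4$ is automatic for $n\ge3$ make explicit what the paper leaves implicit.
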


\begin{proof}
For $n=2$, the condition $n(m)=2$ gives
$
4\leq m\leq10,
$
and hence
$
I_2=\{4,6,8,10\}.
$

Let $n\geq3$. By Definition~\ref{def:In}, $n(m)=n$ if and only if
$$
2^n+2<m\leq2^{n+1}+2.
$$
Since $m$ is even, this is equivalent to \,\, $
2^n+4\leq m\leq2^{n+1}+2.$
\end{proof}

The first few sets are
$$
I_2=\{4,6,8,10\},
\quad 
I_3=\{12,14,16,18\}
\quad \text{and}\quad
I_4=\{20,22,24,26,28,30,32,34\}.
$$
Further, for $n\geq3$,
$
\#I_n=2^{n-1}.$ The sets $I_n$ satisfy the following recursion.

\begin{lemma}\label{lem:interval-recursion}
For every $n\geq3$,
$$
I_n-2^n
=
I_2\cup I_3\cup\cdots\cup I_{n-1},
$$
where
$$
I_n-2^n
=
\{m-2^n:m\in I_n\}.
$$
\end{lemma}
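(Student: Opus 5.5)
The plan is to compare both sides as explicit sets of even integers, using Lemma~\ref{lem:In-description}.

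For $n\geq3$, Lemma~\ref{lem:In-description} gives
$$
I_n-2^n=\{m'\in2\mathbb Z: 4\leq m'\leq 2^n+2\}.
$$
This holds because $m\in I_n$ exactly when $m$ is even and $2^n+4\leq m\leq 2^{n+1}+2$. Subtracting $2^n$, which is even for $n\geq1$, preserves parity and sends these bounds to $4$ and $2^n+2$.

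For the right-hand side, I will show that the sets $I_2,\ldots,I_{n-1}$ tile the even integers in $[4,2^n+2]$ consecutively. By Lemma~\ref{lem:In-description}, $I_2=\{4,\ldots,10\}$, so its top is $10=2^3+2$. For $3\leq k\leq n-1$, the set $I_k$ runs over the even integers from $2^k+4$ to $2^{k+1}+2$. The bottom of $I_k$ is exactly $2$ more than the top $2^k+2$ of $I_{k-1}$, with the convention that $I_2$ has top $2^3+2$. An induction on $n$, or a telescoping check of endpoints, then gives
$$
I_2\cup\cdots\cup I_{n-1}=\{m'\in2\mathbb Z:4\leq m'\leq 2^{n}+2\}.
$$
The base case is $n=3$, where the union is just $I_2=\{4,\ldots,10\}=\{4,\ldots,2^3+2\}$, matching $I_3-8=\{4,6,8,10\}$.

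Alternatively, I can bypass the explicit intervals and argue directly from Definition~\ref{def:In}. An even $m'\geq4$ lies in $I_2\cup\cdots\cup I_{n-1}$ if and only if $n(m')\leq n-1$. This happens if and only if $m'\leq 2^{n}+2$, because the condition $m'\leq2^{k+1}+2$ is monotone in $k$. This gives the same description of the union.

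There is no real obstacle here. The only point needing care is the boundary bookkeeping: $I_2$ has a different form, namely $\{4,\ldots,10\}$ rather than $\{2^2+4,\ldots\}=\{8,\ldots\}$, so the union must be started from $4$ and not from $2^2+4$.
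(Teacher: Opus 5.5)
Your proof is correct and is essentially the paper's argument: both sides are identified, via Lemma~\ref{lem:In-description}, with the set of even integers in $[4,2^n+2]$. You also supply the endpoint bookkeeping that the paper leaves implicit, namely the consecutive tiling of $I_2,\ldots,I_{n-1}$ with $I_2$ treated separately, and your alternative route through the monotonicity of the condition $m'\leq 2^{k+1}+2$ in Definition~\ref{def:In} is equally valid since $n-1\geq 2$.
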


\begin{proof}
By the definition of $I_n$,
$$
I_n-2^n
=
\{4,6,\ldots,2^n+2\}.
$$
Also,
$$
I_2\cup I_3\cup\cdots\cup I_{n-1}
=
\{4,6,\ldots,2^n+2\}.
$$
Hence the two sets are equal.
\end{proof}

\subsection{The parent relation}

For $m\in I_n$, the integer $n$ is determined by $m$.

\begin{prop}\label{prop:level-of-m}
Let $m\in I_n$ with $n\geq3$. Then
$
n
=
\left\lfloor\log_2(m-4)\right\rfloor.
$
\end{prop}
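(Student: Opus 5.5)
By Lemma~\ref{lem:In-description}, for $n\geq3$ every $m\in I_n$ is an even integer with
$$
2^n+4\leq m\leq 2^{n+1}+2 .
$$
The plan is to turn this into bounds on $m-4$ that place it between consecutive powers of $2$, and then take $\log_2$ and the floor.

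Subtracting $4$ gives
$$
2^n\leq m-4\leq 2^{n+1}-2<2^{n+1}.
$$
Since $n\geq3$, the lower bound gives $m-4\geq 8>0$, so $\log_2(m-4)$ is defined. As $\log_2$ is strictly increasing on $(0,\infty)$, we get
$$
n\leq \log_2(m-4)<n+1.
$$
By the definition of the floor function, $\left\lfloor\log_2(m-4)\right\rfloor=n$.

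The only point needing care is the strict upper inequality. It comes from the fact that the largest element of $I_n$ is $2^{n+1}+2$, so $m-4\leq 2^{n+1}-2$ stays strictly below $2^{n+1}$. The hypothesis $n\geq3$ is used only so that the explicit interval description of Lemma~\ref{lem:In-description} applies. The argument is a direct calculation with no real obstacle.
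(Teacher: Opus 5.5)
Your proof is correct and is essentially the paper's argument: both take $2^n+4\leq m\leq 2^{n+1}+2$ from Lemma~\ref{lem:In-description}, subtract $4$ to get $2^n\leq m-4<2^{n+1}$, and read off $\left\lfloor\log_2(m-4)\right\rfloor=n$. You also make explicit the positivity of $m-4$ and the monotonicity of $\log_2$, which the paper leaves implicit.
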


\begin{proof}
Since $m\in I_n$,
$$
2^n+4\leq m\leq2^{n+1}+2.
$$
Hence
$$
2^n\leq m-4<2^{n+1},
$$
and the result follows.
\end{proof}

Lemma~\ref{lem:interval-recursion} shows that, for each $m\in I_n$ with
$n\geq3$, there is a unique $m'\in I_2\cup\cdots\cup I_{n-1}$ such that
$$
m=m'+2^n.
$$
We denote this value $m'$ by $\Par(m)$.

\begin{defi}\label{def:parent}
Let $m\in I_n$ with $n\geq3$. The parent of $m$ is
$
\Par(m)
=
m-2^n.
$
\end{defi}

By Proposition~\ref{prop:level-of-m},
$
\Par(m)
=
m-
2^{\left\lfloor\log_2(m-4)\right\rfloor}.
$

\begin{corollary}\label{cor:parent-bijection}
For every $n\geq3$, the map
$$
\Par:I_n
\longrightarrow
I_2\cup I_3\cup\cdots\cup I_{n-1},
\qquad
m\longmapsto m-2^n,
$$
is a bijection.
\end{corollary}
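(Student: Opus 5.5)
The plan is to obtain the bijection directly from Lemma~\ref{lem:interval-recursion}, checking well-definedness, injectivity and surjectivity in turn.

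\emph{Well-definedness.} Fix $n\geq3$ and $m\in I_n$. By Lemma~\ref{lem:interval-recursion}, $m-2^n$ lies in $I_n-2^n=I_2\cup\cdots\cup I_{n-1}$. So the map $\Par$ really takes values in the stated codomain. This agrees with Definition~\ref{def:parent}, because Proposition~\ref{prop:level-of-m} shows that $n$ is determined by $m$.

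\emph{Injectivity.} The map is the restriction of the translation $m\mapsto m-2^n$ on $\mathbb Z$, and $n$ is fixed throughout. Hence $m_1-2^n=m_2-2^n$ forces $m_1=m_2$.

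\emph{Surjectivity.} Take any $m'\in I_2\cup\cdots\cup I_{n-1}$. The reverse inclusion in Lemma~\ref{lem:interval-recursion} gives $m'=m-2^n$ for some $m\in I_n$, so $m'=\Par(m)$.

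\emph{Cardinality check.} Lemma~\ref{lem:In-description} gives $\#I_n=2^{n-1}$. On the other side,
$$
\#(I_2\cup\cdots\cup I_{n-1})=4+\sum_{k=3}^{n-1}2^{k-1}=2^{n-1},
$$
since the sets $I_k$ are pairwise disjoint. The two counts agree, so the map is an injection between finite sets of equal size. This gives surjectivity a second time, independently of the reverse inclusion.

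I do not expect a real obstacle. The only point needing care is that $I_2$ is not described by the general formula $2^n+4\leq m\leq 2^{n+1}+2$. The lower end of the union must therefore be computed from $I_2=\{4,6,8,10\}$, which is already handled in the proof of Lemma~\ref{lem:interval-recursion}.
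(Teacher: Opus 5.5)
Your proof is correct and follows the paper's own argument: injectivity because $m\mapsto m-2^n$ is a translation, and surjectivity (together with well-definedness) from the equality $I_n-2^n=I_2\cup\cdots\cup I_{n-1}$ in Lemma~\ref{lem:interval-recursion}. Your cardinality count $4+\sum_{k=3}^{n-1}2^{k-1}=2^{n-1}=\#I_n$ is also correct, but it only re-confirms surjectivity, which the reverse inclusion already gives.
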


\begin{proof}
By Lemma~\ref{lem:interval-recursion},
$$
I_n-2^n
=
I_2\cup I_3\cup\cdots\cup I_{n-1}.
$$
Hence every element of
$
I_2\cup\cdots\cup I_{n-1}
$
is of the form $m-2^n$ for some $m\in I_n$, so $\Par$ is surjective.

If
$
\Par(m_1)=\Par(m_2)
$
for $m_1,m_2\in I_n$, then
$
m_1-2^n=m_2-2^n,
$
and hence $m_1=m_2$. Thus $\Par$ is injective.
\end{proof}

\begin{lemma}\label{lem:polygonal-parent-difference}
Let $m\in I_n$ with $n\geq3$. Then
$$
P_m(q)-P_{\Par(m)}(q)
=
2^{n-1}q(q-1).
$$
\end{lemma}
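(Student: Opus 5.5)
The plan is to derive the lemma from two earlier facts: the identity for the difference of two polygonal numbers at the same argument, and the definition of the parent.

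First, recall the computation preceding Section~\ref{sec:even-polygonal-surfaces}. For $m'<m$,
$$
P_m(q)-P_{m'}(q)=\frac{m-m'}{2}\,q(q-1).
$$
This follows directly from the closed form $P_m(q)=\frac{q((m-2)q-m+4)}{2}$, because the right-hand side is affine in $m$ with slope $\frac{q^2-q}{2}$. Equivalently, it follows from the rewriting $P_m(q)=q^2+\frac{m-4}{2}q(q-1)$ used in the proof of Proposition~\ref{prop:q2-suffices}.

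Second, apply Definition~\ref{def:parent}. For $m\in I_n$ with $n\geq3$ we have $\Par(m)=m-2^n$. By Corollary~\ref{cor:parent-bijection}, $\Par(m)$ lies in $I_2\cup\cdots\cup I_{n-1}$, so it is an even integer at least $4$ and $P_{\Par(m)}$ is defined. Setting $m'=\Par(m)$ gives $m-m'=2^n$. Hence
$$
P_m(q)-P_{\Par(m)}(q)=\frac{2^n}{2}\,q(q-1)=2^{n-1}q(q-1).
$$

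There is no real obstacle. The only point needing care is that $\Par(m)\geq3$, so that $P_{\Par(m)}$ makes sense. This is guaranteed by Lemma~\ref{lem:interval-recursion}, which gives $I_n-2^n=\{4,6,\ldots,2^n+2\}$.
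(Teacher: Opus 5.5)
Your proof is correct and is essentially the paper's argument: the paper also combines $m-\Par(m)=2^n$ with the identity $P_m(q)-P_{m'}(q)=\frac{m-m'}{2}\,q(q-1)$. Your check that $\Par(m)\geq 4$, so that $P_{\Par(m)}$ is defined, is a small detail the paper leaves implicit.
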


\begin{proof}
Since
$
m-\Par(m)=2^n,
$
we have
$$
P_m(q)-P_{\Par(m)}(q)
=
\frac{m-\Par(m)}{2}\,q(q-1)
=
2^{n-1}q(q-1).
$$
\end{proof}

\subsection{Parent chains}

We now iterate the parent map.
\begin{lemma}\label{lem:parent-termination}
Let $n\geq3$ and $m\in I_n$. Repeated application of $\Par$ on $m$ reaches an element of $I_2$ after finitely many steps.
\end{lemma}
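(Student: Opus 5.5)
The plan is to argue by strong induction on $n\geq3$, proving that for every $m\in I_n$ the chain $m,\Par(m),\Par^2(m),\ldots$ is well defined and reaches an element of $I_2$ after at most $n-2$ applications of $\Par$. The only tool needed is Corollary~\ref{cor:parent-bijection}: for $m\in I_n$ with $n\geq3$, it places $\Par(m)=m-2^n$ in $I_2\cup I_3\cup\cdots\cup I_{n-1}$. Equivalently, by Lemma~\ref{lem:interval-recursion}, $\Par(m)\in I_k$ for a unique $k$ with $2\leq k\leq n-1$. The sets $I_k$ are pairwise disjoint, since $n(m)$ is a well-defined function of $m$. Hence this level $k$ is determined, and $\Par$ can be applied again whenever $k\geq3$.

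For the base case $n=3$, take $m\in I_3=\{12,14,16,18\}$. Then $\Par(m)=m-8\in\{4,6,8,10\}=I_2$, so one step suffices. For the inductive step, assume the claim for all levels $3\leq k\leq n-1$ and let $m\in I_n$. Then $\Par(m)\in I_k$ with $2\leq k\leq n-1$. If $k=2$, we are done after one step. Otherwise $k\geq3$, and the induction hypothesis says that repeated application of $\Par$ to $\Par(m)$ reaches $I_2$ after at most $k-2$ steps. So $m$ reaches $I_2$ after at most $1+(k-2)\leq n-2$ steps.

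An alternative way to see termination is through the level function. Define $\ell(m)=n$ for $m\in I_n$; by Proposition~\ref{prop:level-of-m}, $\ell(m)=\lfloor\log_2(m-4)\rfloor$ when $n\geq3$. Each application of $\Par$ strictly decreases $\ell$ while keeping it at least $2$. A strictly decreasing sequence of integers bounded below by $2$ is finite, so the chain terminates, and it can only stop at an element of $I_2$, where $\Par$ is not defined.

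The only delicate point is to check that $\Par(m)$ lands in some $I_k$ with $k<n$, rather than merely being an even integer. This is exactly the content of Lemma~\ref{lem:interval-recursion}. I also need the iteration to stay within the domain of $\Par$, which is the union of the $I_k$ with $k\geq3$, until it reaches $I_2$. Beyond that, no real obstacle is expected.
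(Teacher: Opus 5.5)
Your proof is correct and follows essentially the paper's route. Corollary~\ref{cor:parent-bijection} places $\Par(m)$ in $I_2\cup\cdots\cup I_{n-1}$, so the level index strictly decreases at each step and the chain must stop at $I_2$; your strong induction formalizes this descent and also gives the explicit bound of at most $n-2$ applications of $\Par$.
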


\begin{proof}
If $m\in I_n$, then Corollary~\ref{cor:parent-bijection} gives
$$
\Par(m)\in I_2\cup I_3\cup\cdots\cup I_{n-1}.
$$
If $\Par(m)\notin I_2$, then $\Par(m)\in I_r$ for some $3\leq r<n$,
and we apply $\Par$ again. At each step, the index $r$ of the set $I_r$
decreases. Hence, after finitely many steps, an element of $I_2$ is
reached.
\end{proof}

\begin{defi}\label{def:parent-chain}
Let $n\geq3$ and $m\in I_n$. By Lemma~\ref{lem:parent-termination},
there is a finite sequence
$
m_0,m_1,\ldots,m_t=m
$
such that
$
m_0\in I_2
$
and
$$
m_{i-1}=\Par(m_i),
\qquad 1\leq i\leq t.
$$
The sequence
$$
m=m_t
\longmapsto
m_{t-1}
\longmapsto
\cdots
\longmapsto
m_1
\longmapsto
m_0
$$
is called the \emph{parent chain} of $m$.
\end{defi}

Each term of the parent chain is uniquely determined by the preceding
term, since $\Par(m_i)$ is uniquely determined by $m_i$. Hence the
parent chain of $m$ is unique.

\begin{ex}\label{ex:parent-chain-358}
Let $m=358$. By Proposition~\ref{prop:level-of-m},
$$
\left\lfloor\log_2(358-4)\right\rfloor
=
\left\lfloor\log_2 354\right\rfloor
=
8,
$$
so $358\in I_8$. Hence
$$
\Par(358)=358-2^8=102.
$$
Next,
$$
\left\lfloor\log_2(102-4)\right\rfloor
=
\left\lfloor\log_2 98\right\rfloor
=
6,
$$
so $102\in I_6$ and
$$
\Par(102)=102-2^6=38.
$$
Finally,
$$
\left\lfloor\log_2(38-4)\right\rfloor
=
\left\lfloor\log_2 34\right\rfloor
=
5,
$$
so $38\in I_5$ and
$$
\Par(38)=38-2^5=6.
$$
Since $6\in I_2$, the parent chain of $358$ is
$$
358\longmapsto102\longmapsto38\longmapsto6.
$$
\end{ex}

\begin{lemma}\label{lem:parent-endpoint}
Let
$$
m=m_t\longmapsto m_{t-1}\longmapsto\cdots\longmapsto m_0
$$
be the parent chain of $m$. Then
$$
m_0\equiv m\pmod8.
$$
In particular, $m_0\in I_2$ is uniquely determined by the residue of
$m$ modulo $8$.
\end{lemma}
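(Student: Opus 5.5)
The plan is to show that each application of $\Par$ preserves the residue modulo $8$, and then to use that $I_2$ meets each relevant residue class exactly once.

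\emph{Each step preserves the residue.} Let $m_i$ be a term of the chain with $i\geq1$. Then $m_i\in I_{r}$ for some $r\geq3$, since otherwise $\Par$ would not be applied to it. By Definition~\ref{def:parent},
$$
m_{i-1}=\Par(m_i)=m_i-2^{r}.
$$
Because $r\geq3$, the power $2^r$ is divisible by $8$, so $m_{i-1}\equiv m_i\pmod 8$.

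\emph{Conclusion along the chain.} Inducting along the chain from $m_t=m$ down to $m_0$ gives $m_0\equiv m\pmod8$. If $m\in I_2$, the chain is trivial and the claim holds immediately.

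\emph{Uniqueness.} By Lemma~\ref{lem:In-description}, $I_2=\{4,6,8,10\}$. These four numbers have residues $4,6,0,2$ modulo $8$, which are pairwise distinct and are exactly the four even residue classes. Every $m$ under consideration is even, so its residue modulo $8$ lies in one of these classes. Hence $m_0$ is the unique element of $I_2$ in the residue class of $m$.

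There is no real obstacle. The only point needing care is that every parent step subtracts $2^r$ with $r\geq3$, never $r=2$, and this holds because $\Par$ is defined only on $I_n$ for $n\geq3$.
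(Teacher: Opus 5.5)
Your proposal is correct and follows essentially the same route as the paper's proof. Both show that each parent step subtracts $2^r$ with $r\geq3$, which preserves the residue modulo $8$, and both use that the residues of $4,6,8,10$ modulo $8$ are pairwise distinct. Your remark that these four residues exhaust the even classes is a small addition that also gives existence.
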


\begin{proof}
For $1\leq i\leq t$, if $m_i\in I_r$, then $r\geq3$ and
$$
m_{i-1}
=
\Par(m_i)
=
m_i-2^r.
$$
Since $8\mid2^r$,
$$
m_{i-1}\equiv m_i\pmod8.
$$
Hence
$$
m_0\equiv m\pmod8.
$$
Since
$$
I_2=\{4,6,8,10\},
$$
whose elements have distinct residues modulo $8$, the endpoint $m_0$
is uniquely determined.
\end{proof}
The parent chain reduces $m$ to an element $m_0\in I_2$. The polynomial
construction will proceed along the parent chain in the reverse direction.

\subsection{Block decomposition}

By Lemma~\ref{lem:parent-endpoint}, the last element of the parent chain
of $m$ is determined by $m$ modulo $8$.

\begin{lemma}\label{lem:block-representation}
Let $n\geq3$ and $m\in I_n$. Then there exist unique integers $j$ and
$k$ such that
$
1\leq j\leq2^{n-3},
~
1\leq k\leq4,
$
and
$$
m
=
2^n+4+8(j-1)+2(k-1).
$$
If $m_0\in I_2$ is the last element of the parent chain of $m$, then
$
m_0=2k+2.
$
\end{lemma}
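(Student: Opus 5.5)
The plan is to obtain the representation by division with remainder applied to $m-2^n-4$, and then read off $m_0$ from its residue modulo $8$ using Lemma~\ref{lem:parent-endpoint}.

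\emph{Existence.} By Lemma~\ref{lem:In-description}, $m\in I_n$ with $n\geq3$ means $m$ is even and $2^n+4\leq m\leq 2^{n+1}+2$. Set
$$
s=\frac{m-2^n-4}{2}.
$$
This is an integer because $m$ and $2^n+4$ are both even. The bounds on $m$ give $0\leq s\leq 2^{n-1}-1$. Division with remainder by $4$ gives unique integers $a\geq0$ and $0\leq b\leq3$ with $s=4a+b$. Since $s\leq 2^{n-1}-1=4\cdot2^{n-3}-1$, we get $0\leq a\leq 2^{n-3}-1$. Put $j=a+1$ and $k=b+1$. Then $1\leq j\leq 2^{n-3}$, $1\leq k\leq4$, and $m=2^n+4+2s=2^n+4+8(j-1)+2(k-1)$.

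\emph{Uniqueness.} Suppose $(j,k)$ satisfies both the ranges and the identity. Then $(m-2^n-4)/2=4(j-1)+(k-1)$ with $0\leq k-1\leq3$. By uniqueness of division with remainder, $j-1$ and $k-1$ are the quotient and remainder, so $(j,k)$ is determined.

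\emph{Endpoint.} Since $n\geq3$, we have $8\mid 2^n$, so $m\equiv 4+2(k-1)=2k+2 \pmod 8$. The number $2k+2$ lies in $\{4,6,8,10\}=I_2$. By Lemma~\ref{lem:parent-endpoint}, $m_0\in I_2$ satisfies $m_0\equiv m\pmod 8$, and elements of $I_2$ have distinct residues modulo $8$. Therefore $m_0=2k+2$.

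Every step is routine. The only points needing care are the bound $j\leq 2^{n-3}$, which uses the upper end $2^{n+1}+2$ of $I_n$, and the use of $n\geq3$ to ensure $2^n\equiv0\pmod 8$.
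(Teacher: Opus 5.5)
Your proposal is correct and follows essentially the same route as the paper. Both arguments apply division with remainder by $4$ to $(m-2^n-4)/2$ to obtain $(j,k)$, and then identify $m_0$ from $m\equiv 2k+2 \pmod 8$ via Lemma~\ref{lem:parent-endpoint}; your uniqueness step, which compares an arbitrary admissible pair with the quotient and remainder, is slightly more explicit than the paper's.
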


\begin{proof}
Since $m\in I_n$,
$$
2^n+4\leq m\leq2^{n+1}+2.
$$
As $m$ is even, there exists a unique integer $r$ such that
$$
m=2^n+4+2r,
\qquad
0\leq r\leq2^{n-1}-1.
$$

By the division algorithm, there exist unique integers $u$ and $v$ such
that
$$
r=4u+v,
\qquad
0\leq v\leq3.
$$
Since $r\leq2^{n-1}-1$, we have
$$
0\leq u\leq2^{n-3}-1.
$$
Set
$$
j=u+1,
\qquad
k=v+1.
$$
Then
$$
1\leq j\leq2^{n-3},
\qquad
1\leq k\leq4,
$$
and
$$
m
=
2^n+4+8(j-1)+2(k-1).
$$
The uniqueness of $u$ and $v$ gives the uniqueness of $j$ and $k$.

Since $n\geq3$, we have $8\mid2^n$. Hence
$$
m\equiv4+2(k-1)=2k+2\pmod8.
$$
By Lemma~\ref{lem:parent-endpoint},
$$
m_0\equiv m\pmod8.
$$
Since $m_0\in\{4,6,8,10\}$, it follows that
$$
m_0=2k+2.
$$
\end{proof}

\begin{corollary}\label{cor:k-parent-chain}
Let $m\in I_r$ and $m'\in I_s$, where $r,s\geq3$, and suppose that
$m$ and $m'$ lie in the same parent chain. Write
$$
m
=
2^r+4+8(a-1)+2(k-1)
$$
and
$$
m'
=
2^s+4+8(b-1)+2(k'-1)
$$
as in Lemma~\ref{lem:block-representation}. Then
$$
k=k'.
$$
\end{corollary}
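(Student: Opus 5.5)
Since $m$ and $m'$ lie in the same parent chain, and the parent chain of any element is unique, they share the same terminal element $m_0\in I_2$. To justify this, I would note that the statement is symmetric in $m$ and $m'$, so we may assume $m'$ occurs at or after $m$ in the chain of $m$, that is, $m'=\Par^{\ell}(m)$ for some $\ell\geq0$. Then the parent chain of $m'$ is the tail of the parent chain of $m$ starting at $m'$. This uses the remark after Definition~\ref{def:parent-chain} that each term determines the next one uniquely. Both chains therefore end at the same $m_0\in I_2$.

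Next I would apply Lemma~\ref{lem:block-representation} to each element separately. For $m\in I_r$ with $r\geq3$, it gives $m_0=2k+2$. For $m'\in I_s$ with $s\geq3$, it gives $m_0=2k'+2$. Hence $2k+2=2k'+2$, so $k=k'$.

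As an alternative that avoids talking about tails of chains, I could use Lemma~\ref{lem:parent-endpoint} directly. Each step of a parent chain preserves the residue modulo $8$, so $m\equiv m'\pmod 8$. Since $8\mid 2^r$ and $8\mid 2^s$, the block representations give $m\equiv 2k+2$ and $m'\equiv 2k'+2\pmod 8$. Because $1\leq k,k'\leq4$, the values $2k+2,2k'+2$ lie in $\{4,6,8,10\}$, whose elements are distinct modulo $8$, and so $k=k'$.

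The only step needing care is the meaning of ``lie in the same parent chain.'' I take it to mean that one element appears in the parent chain of the other, and I would make this explicit. Once that is settled, the rest is immediate from Lemmas~\ref{lem:parent-endpoint} and~\ref{lem:block-representation}.
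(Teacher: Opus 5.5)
Your proof is correct and matches the paper's: both chains end at the same $m_0\in I_2$, and Lemma~\ref{lem:block-representation} gives $m_0=2k+2=2k'+2$, so $k=k'$. Your tail-of-the-chain justification for the shared endpoint, and your alternative mod-$8$ argument, only spell out what the paper's one-line proof takes for granted.
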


\begin{proof}
Since $m$ and $m'$ lie in the same parent chain, their parent chains
end at the same element $m_0\in I_2$. By
Lemma~\ref{lem:block-representation},
$
m_0=2k+2=2k'+2.
$
Hence
$
k=k'.
$
\end{proof}

For a fixed $j$, the four values $k=1,2,3,4$ give four consecutive even
elements of $I_n$:
$$
2^n+4+8(j-1),\quad
2^n+6+8(j-1),\quad
2^n+8+8(j-1),\quad
2^n+10+8(j-1).
$$
By Lemma~\ref{lem:block-representation}, their parent chains end at
$4,6,8,$ and $10$, respectively. This gives the following grouping of the elements of $I_n$.

\begin{defi}\label{def:blocks}
Set
$
A_{2,1}=\{4,6,8,10\}.
$
For $n\geq3$ and
$
1\leq j\leq2^{n-3},
$
define
$$
a_{n,j}=2^n+4+8(j-1)
$$
and
$$
A_{n,j}
=
\{a_{n,j},a_{n,j}+2,a_{n,j}+4,a_{n,j}+6\}.
$$
We call $A_{n,j}$ the $j$-th block of $I_n$.
\end{defi}

By Lemma~\ref{lem:block-representation},
$$
I_n
=
\bigcup_{j=1}^{2^{n-3}}A_{n,j},
$$
and these sets are disjoint.

\begin{defi}\label{def:block-chain}
Let
$$
m=m_t\longmapsto m_{t-1}\longmapsto\cdots\longmapsto m_0
$$
be the parent chain of $m$. For each $0\leq i\leq t$, let
$
m_i\in A_{n_i,j_i}.
$
The sequence
$$
A_{n_t,j_t}
\longmapsto
A_{n_{t-1},j_{t-1}}
\longmapsto
\cdots
\longmapsto
A_{n_0,j_0}
$$
is called the \emph{block chain} of $m$.
\end{defi}

\begin{ex}\label{ex:block-358}
From Example~\ref{ex:parent-chain-358},
$$
358\longmapsto102\longmapsto38\longmapsto6.
$$

For $358\in I_8$,
$$
358=2^8+4+2(49),
\qquad
49=4\cdot12+1,
$$
so $j=13$ and $k=2$. Hence
$$
358\in A_{8,13}.
$$
By Corollary~\ref{cor:k-parent-chain}, the same value $k=2$ occurs for
$102$ and $38$. Moreover,
$$
102=2^6+4+2(17),
\qquad
17=4\cdot4+1,
$$
and
$$
38=2^5+4+2(1),
\qquad
1=4\cdot0+1.
$$
Thus
$$
102\in A_{6,5},
\qquad
38\in A_{5,1}.
$$
Therefore the block chain of $358$ is
$$
A_{8,13}
\longmapsto
A_{6,5}
\longmapsto
A_{5,1}
\longmapsto
A_{2,1}.
$$
\end{ex}

The arrangement of the blocks for $2\leq n\leq6$ is shown in
Figure~\ref{fig:block-dependency-B4}. For $r<n$, the blocks
$A_{n,j}$ and $A_{r,s}$ are joined by an edge if there exists
$m\in A_{n,j}$ such that
$
\Par(m)\in A_{r,s}.
$

\begin{figure}[ht!]
\centering

\resizebox{0.90\textwidth}{!}{%
\begin{tikzpicture}[
    block/.style={
        circle,
        draw=green!35!black,
        line width=0.9pt,
        shade,
        ball color=yellow!90!orange,
        text=black,
        font=\small\bfseries,
        minimum size=13mm,
        inner sep=1pt
    },
    dependency/.style={
    draw=black!75,
    line width=0.8pt
},
    arrowlabel/.style={
    midway,
    fill=white,
    inner xsep=3pt,
    inner ysep=1pt,
    font=\large\bfseries,
    text=black
}
]

\node[block] (A1) at (7,7.6) {$A_{2,1}$};

\node[block] (A51) at (17,4.7) {$A_{6,1}$};

\node[block] (A41) at (13.8,4.7) {$A_{5,1}$};
\node[block] (A55) at (13.8,2.0) {$A_{6,5}$};

\node[block] (A31) at (10.0,4.7) {$A_{4,1}$};
\node[block] (A53) at (11.7,2.0) {$A_{6,3}$};

\node[block] (A43) at (8.15,2.0) {$A_{5,3}$};
\node[block] (A57) at (8.1,-0.75) {$A_{6,7}$};

\node[block] (A2) at (2.0,4.7) {$A_{3,1}$};

\node[block] (A52) at (4.9,2.0) {$A_{6,2}$};

\node[block] (A42) at (2.0,2.0) {$A_{5,2}$};
\node[block] (A56) at (2.0,-0.75) {$A_{6,6}$};

\node[block] (A32) at (-1.5,2.0) {$A_{4,2}$};
\node[block] (A54) at (-0.3,-0.75) {$A_{6,4}$};

\node[block] (A44) at (-2.75,-0.75) {$A_{5,4}$};
\node[block] (A58) at (-2.7,-3.40) {$A_{6,8}$};

\draw[dependency] (A2) --
node[arrowlabel] {$\Large{L}$}
(A1);
\draw[dependency] (A31) -- node[arrowlabel] {$\Large{EL}$}(A1);
\draw[dependency] (A41) -- node[arrowlabel] {$\Large{E^2L}$}(A1);
\draw[dependency] (A51) -- node[arrowlabel] {$\Large{E^3L}$}(A1);

\draw[dependency] (A32) -- node[arrowlabel] {$\Large{L}$} (A2);
\draw[dependency] (A42) -- node[arrowlabel] {$\Large{EL}$}(A2);
\draw[dependency] (A52) -- node[arrowlabel] {$\Large{E^2L}$} (A2);

\draw[dependency] (A44) -- node[arrowlabel] {$\Large{L}$} (A32);
\draw[dependency] (A54) -- node[arrowlabel] {$\Large{EL}$} (A32);
\draw[dependency] (A58) --  node[arrowlabel] {$\Large{L}$}(A44);

\draw[dependency] (A56) -- node[arrowlabel] {$\Large{L}$} (A42);

\draw[dependency] (A43) -- node[arrowlabel] {$\Large{L}$} (A31);
\draw[dependency] (A53) -- node[arrowlabel] {$\Large{EL}$} (A31);
\draw[dependency] (A57) --  node[arrowlabel] {$\Large{L}$}(A43);

\draw[dependency] (A55) -- node[arrowlabel] {$\Large{L}$} (A41);

\end{tikzpicture}%
}
\caption{ The blocks $A_{n,j}$ for $2\leq n\leq6$ and the parent relations between them.
\label{fig:block-dependency-B4}
}
\end{figure}

\subsection{Initial associated families}
\label{subsec:initial-associated-families}

By Lemma~\ref{lem:parent-endpoint}, every parent chain ends at an element
of
$$
I_2=\{4,6,8,10\}.
$$
We first define the associated polynomial families for these four values.

Let
$$
\Delta(Y_0)=Y_0^{q-1},
\qquad
\gamma(Y_0)=1-\Delta(Y_0),
$$
and
$$
\alpha(Y_0,Y_1)
=
\Delta(Y_0)Y_1(Y_1+Y_0).
$$
For $y_0\in\mathbb F_q$,
$$
\Delta(y_0)
=
\begin{cases}
0,&y_0=0,\\
1,&y_0\neq0,
\end{cases}
\qquad
\gamma(y_0)
=
\begin{cases}
1,&y_0=0,\\
0,&y_0\neq0.
\end{cases}
$$

Define
\begin{equation}
\Omega(Y_0,Y_1)
=
\Delta(Y_0)+\Delta(Y_1)-\Delta(Y_0)\Delta(Y_1).
\label{eq:Omega}
\end{equation}
and
\begin{equation}
\begin{aligned}
\ell_0(Y_0,Y_1)&=1,\qquad
\ell_1(Y_0,Y_1)&=Y_1, \qquad 
\ell_2(Y_0,Y_1)&=Y_0, \qquad
\ell_3(Y_0,Y_1)&=Y_0+Y_1.
\end{aligned}
\label{eq:ell-j}
\end{equation}
For $m\in I_2=\{4,6,8,10\}$, set
\begin{equation}
\sigma_m(Y_0,Y_1)
=
\Omega(Y_0,Y_1)
\prod_{j=0}^{(m-4)/2}\ell_j(Y_0,Y_1),
\end{equation}

and define 
\begin{equation}
    \mathcal F^{(m)}
=
\bigl(Q(Y_0),\sigma_m(Y_0,Y_1)\bigr).
\label{eq:initial-Fm}
\end{equation}

\begin{prop}\label{prop:initial-even-systems}
Let $m\in\{4,6,8,10\}$, and let $\mathcal F^{(m)}$ be defined by
\eqref{eq:initial-Fm}. If $\mathcal G^{(m)}$ is a defining family
whose associated family is $\mathcal F^{(m)}$, then
$$
\#S_{\mathcal G^{(m)}}(\mathbb F_q)
=
P_m(q).
$$
\end{prop}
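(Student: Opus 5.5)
The plan is to reduce everything to a count of zeros in $\mathbb F_q^2$ using Theorem~\ref{thm q(q-1)+qN_0}. Since $\mathcal G^{(m)}$ has $n=2$ polynomials and associated family $\mathcal F^{(m)}=(Q(Y_0),\sigma_m(Y_0,Y_1))$, that theorem gives
$$
\#S_{\mathcal G^{(m)}}(\mathbb F_q)=q(q-1)+qN_0(\mathcal G^{(m)}).
$$
Solving $P_m(q)=q(q-1)+qN_0$ for $N_0$ using $P_m(q)=q\bigl((m-2)q-m+4\bigr)/2$ gives the target
$$
N_0(\mathcal G^{(m)})=\frac{m-4}{2}(q-1)+1 .
$$
Concretely, the targets are $1$, $q$, $2q-1$ and $3q-2$ for $m=4,6,8,10$. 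So it suffices to count the pairs $(y_0,y_1)\in\mathbb F_q^2$ with $Q(y_0)=0$ and $\sigma_m(y_0,y_1)=0$.

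I read $Q(Y_0)$ as imposing no condition, i.e.\ vanishing at every $y_0\in\mathbb F_q$ (for instance $Q=0$ or $Q=Y_0^q-Y_0$). This is consistent with the target counts. If the paper's $Q$ were different, this is the first point I would re-check. The count then reduces to the zero set of $\sigma_m$ in $\mathbb F_q^2$.

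Next I would use the pointwise values of $\Delta$. From \eqref{eq:Omega}, $\Omega(y_0,y_1)=1-\gamma(y_0)\gamma(y_1)$. Hence $\Omega(y_0,y_1)=0$ exactly when $y_0=y_1=0$, and $\Omega=1$ otherwise. Since $\mathbb F_q$ is a field, $\sigma_m(y_0,y_1)=0$ iff either $(y_0,y_1)=(0,0)$ or some factor $\ell_j(y_0,y_1)$ with $1\le j\le (m-4)/2$ vanishes; the factor $\ell_0=1$ never vanishes. Each $\ell_j$ with $j\ge1$ is a linear form through the origin. So the zero set of $\sigma_m$ is $\{(0,0)\}$ together with the union of the lines $\{\ell_j=0\}$, $1\le j\le (m-4)/2$.

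The final step is inclusion--exclusion, case by case:
\begin{itemize}
\item For $m=4$ the zero set is just the origin, giving $1$.
\item For $m=6$ it is the line $y_1=0$, giving $q$.
\item For $m=8$ it is the union of the two axes, giving $2q-1$.
\item For $m=10$ it is the union of the axes and the line $y_0+y_1=0$, giving $3q-2$.
\end{itemize}
For $m=10$ one must check that the three lines are pairwise distinct and meet only at the origin. This holds in every characteristic, including characteristic $2$ where $y_0+y_1=y_0-y_1$, because $y_0+y_1=0$ together with $y_0=0$ or $y_1=0$ forces the origin. These counts match the targets, and substituting into Theorem~\ref{thm q(q-1)+qN_0} gives $P_m(q)$. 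The main obstacle is really only the interpretation of $Q$; the rest is routine.
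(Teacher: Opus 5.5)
Your proposal is correct and follows the paper's proof. Like the paper, you reduce via Theorem~\ref{thm q(q-1)+qN_0} to counting zeros of $\sigma_m$ in $\mathbb F_q^2$, reading $Q$ exactly as the paper does, as vanishing at every $y_0\in\mathbb F_q$. You then use that $\Omega$ vanishes only at the origin and count the union of lines case by case to obtain $1$, $q$, $2q-1$ and $3q-2$. Your explicit check that the three lines for $m=10$ remain distinct in characteristic $2$ is a detail the paper asserts without comment.
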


\begin{proof}
Since $Q(y_0)=0$ for every $y_0\in\mathbb F_q$,
$
N_0(\mathcal G^{(m)})
$
is the number of zeros of
$
\sigma_m(Y_0,Y_1)
$
in $\mathbb F_q^2$. Moreover,
$
\Omega(y_0,y_1)=0
$
if and only if
$
(y_0,y_1)=(0,0).
$

We consider the four values of $m$.

\begin{enumerate}
\item For $m=4$,
$
\sigma_4(Y_0,Y_1)=\Omega(Y_0,Y_1).
$
Hence its only zero is $(0,0)$, and therefore
$
N_0(\mathcal G^{(4)})=1.
$

\item For $m=6$,
$
\sigma_6(Y_0,Y_1)
=
\Omega(Y_0,Y_1)Y_1.
$
Thus $\sigma_6=0$ if and only if
$
\Omega(Y_0,Y_1)=0
$
or
$
Y_1=0.
$
Since the only zero of $\Omega$ is $(0,0)$, which already lies on
$Y_1=0$, the zero set of $\sigma_6$ is the line $Y_1=0$. Hence
$
N_0(\mathcal G^{(6)})=q.
$

\item For $m=8$,
$
\sigma_8(Y_0,Y_1)
=
\Omega(Y_0,Y_1)Y_0Y_1.
$
Its zero set is the union of the two lines
$$
Y_0=0
\qquad\text{and}\qquad
Y_1=0,
$$
since the zero of $\Omega$ is their common point $(0,0)$. Each line
contains $q$ points, and they intersect only at $(0,0)$. Therefore
$
N_0(\mathcal G^{(8)})
=
2q-1.
$

\item For $m=10$,
$
\sigma_{10}(Y_0,Y_1)
=
\Omega(Y_0,Y_1)Y_0Y_1(Y_0+Y_1).
$
Its zero set is the union of the three lines
$$
Y_0=0,
\qquad
Y_1=0,
\qquad
Y_0+Y_1=0.
$$
These lines are distinct and meet pairwise only at $(0,0)$. Hence
their union has
$
1+3(q-1)=3q-2
$
points. Thus
$
N_0(\mathcal G^{(10)})=3q-2.
$
\end{enumerate}

Hence, for every $m\in\{4,6,8,10\}$,
$$
N_0(\mathcal G^{(m)})
=
1+\frac{m-4}{2}(q-1).
$$
By Theorem~\ref{thm q(q-1)+qN_0},
$$
\#S_{\mathcal G^{(m)}}(\mathbb F_q)
=
q(q-1)
+
q\left(
1+\frac{m-4}{2}(q-1)
\right)
=
P_m(q).
$$
\end{proof}

These four families form the initial associated cases of the recursive construction.
We next define the operations used to construct $\mathcal F^{(m)}$
from $\mathcal F^{(\Par(m))}$.

\subsection{Recursive operators}\label{subsec:recursive-operations}

Let $m\in I_n$ with $n\geq3$, and suppose that $\Par(m)\in I_r$.
By Lemma~\ref{lem:polygonal-parent-difference},
$$
P_m(q)-P_{\Par(m)}(q)
=
2^{n-1}q(q-1).
$$
Hence Theorem~\ref{thm q(q-1)+qN_0} shows that passing from
$P_{\Par(m)}(q)$ to $P_m(q)$ requires an increase of
$
2^{n-1}(q-1)
$
in $N_0$.

We define two operators on associated families to obtain this increase.

Let
$
\mathcal G=(G_1,\ldots,G_s)
$
be a defining family as in \eqref{eq:defining-polynomials}, and let
$
\mathcal F=(F_1,\ldots,F_s)
$
be its associated family as in \eqref{eq:associated-family}, where
$$
F_i\in\mathbb F_q[Y_0,\ldots,Y_{i-1}],
\qquad
1\leq i\leq s.
$$

For $1\leq i\leq s$, let
$$
\widehat F_i(Y_1,\ldots,Y_i)
=
F_i(Y_1,\ldots,Y_i),
$$
where $\widehat F_i$ is obtained from $F_i$ by replacing $Y_j$ by
$Y_{j+1}$ for $0\leq j\leq i-1$.

\begin{defi}\label{def:L-operation}
Define the operator $L$ by
$$
L(\mathcal F)
=
\left(
Y_0^q-Y_0,\,
\alpha(Y_0,Y_1)+\gamma(Y_0)\widehat F_1(Y_1),\,
\ldots,\,
\alpha(Y_0,Y_s)+\gamma(Y_0)\widehat F_s(Y_1,\ldots,Y_s)
\right).
$$
Then $L(\mathcal F)$ consists of $s+1$ polynomials in
$\mathbb F_q[Y_0,\ldots,Y_s]$.
\end{defi}

\begin{ex}\label{ex:L-F6}
For
$
\mathcal F^{(6)}=(Y_0^q-Y_0,Y_1),
$
we have
$$
\widehat F_1(Y_1)=Y_1^q-Y_1,
\qquad
\widehat F_2(Y_1,Y_2)=Y_2.
$$
Hence
$$
L(\mathcal F^{(6)})
=
\left(
Y_0^q-Y_0,\,
\alpha(Y_0,Y_1)+\gamma(Y_0)(Y_1^q-Y_1),\,
\alpha(Y_0,Y_2)+\gamma(Y_0)Y_2
\right).
$$
\end{ex}

The next lemma gives the change in $N_0$ under the operator $L$.

\begin{lemma}\label{lem:L-N0}
Let $\mathcal G$ be a defining family with associated family
$
\mathcal F=(F_1,\ldots,F_s)
$
as in \eqref{eq:associated-family}. Let $\mathcal G_L$ be a defining
family whose associated family is $L(\mathcal F)$. Then
$$
N_0(\mathcal G_L)
=
N_0(\mathcal G)+2^s(q-1).
$$
\end{lemma}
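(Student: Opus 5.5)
By Theorem~\ref{thm q(q-1)+qN_0}, $N_0(\mathcal G_L)$ is the number of common zeros in $\mathbb F_q^{s+1}$ of the $s+1$ polynomials in $L(\mathcal F)$. So the plan is to count the zero set $Z$ of $L(\mathcal F)$ directly. The first polynomial $Y_0^q-Y_0$ vanishes identically on $\mathbb F_q$, so $y_0$ is unconstrained a priori. We then split $Z$ according as $y_0=0$ or $y_0\neq0$, using the values of $\Delta$ and $\gamma$ recorded in Subsection~\ref{subsec:initial-associated-families}.

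\emph{Case $y_0=0$.} Here $\Delta(0)=0$ and $\gamma(0)=1$, so $\alpha(0,y_i)=0$. The remaining equations therefore become
$$\widehat F_i(y_1,\ldots,y_i)=F_i(y_1,\ldots,y_i)=0,\qquad 1\leq i\leq s.$$
After the shift $Y_j\mapsto Y_{j+1}$, these are exactly the defining conditions of $N_0(\mathcal G)$ on $(y_1,\ldots,y_s)\in\mathbb F_q^{s}$. Hence this slice contributes exactly $N_0(\mathcal G)$ points.

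\emph{Case $y_0\neq0$.} Now $\Delta(y_0)=1$ and $\gamma(y_0)=0$, so the equations decouple into
$$y_i(y_i+y_0)=0,\qquad 1\leq i\leq s,$$
each involving only $y_0$ and $y_i$. Since $y_0\neq0$, the roots $0$ and $-y_0$ are distinct, so each $y_i$ has exactly $2$ choices, independently of the others. This is the one point that needs care: the roots must not coincide, including in characteristic $2$. There $-y_0=y_0\neq0$, so the two roots are still distinct. Thus each of the $q-1$ values of $y_0$ contributes $2^s$ points, and this slice has $2^s(q-1)$ points.

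Adding the two disjoint slices gives $N_0(\mathcal G_L)=N_0(\mathcal G)+2^s(q-1)$. No step is a real obstacle; the only things to verify are the reindexing in the first case (so that $\widehat F_i$ imposes precisely the conditions defining $N_0(\mathcal G)$) and the distinctness of $0$ and $-y_0$ in the second.
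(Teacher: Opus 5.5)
Your proof is correct and follows the paper's argument essentially verbatim. You split on $y_0=0$, where $\gamma=1$ and $\alpha=0$, so after reindexing the equations are exactly those defining $N_0(\mathcal G)$, versus $y_0\neq0$, where each $y_i\in\{0,-y_0\}$ independently and the slice has $2^s(q-1)$ points; your explicit check that $0\neq -y_0$, including in characteristic $2$, is a detail the paper leaves implicit.
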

\begin{proof}
By definition of $L(\mathcal F)$, the solutions counted by
$N_0(\mathcal G_L)$ are the tuples
$
(y_0,y_1,\ldots,y_s)\in\mathbb F_q^{s+1}
$
satisfying
$$
y_0^q-y_0=0
$$
and
$$
\alpha(y_0,y_i)
+
\gamma(y_0)\widehat F_i(y_1,\ldots,y_i)
=
0,
\qquad
1\leq i\leq s.
$$
Since $y_0^q-y_0=0$ for every $y_0\in\mathbb F_q$, we separate the
solutions according as $y_0=0$ or $y_0\neq0$.

If $y_0=0$, then
$
\gamma(y_0)=1
$
and
$
\alpha(y_0,y_i)=0.
$
Hence the equations reduce to
$$
\widehat F_i(y_1,\ldots,y_i)=0,
\qquad
1\leq i\leq s.
$$
After relabelling $y_1,\ldots,y_s$ as $y_0,\ldots,y_{s-1}$, these are
the equations defining $N_0(\mathcal G)$. Thus the contribution from
$y_0=0$ is $N_0(\mathcal G)$.

If $y_0\neq0$, then
$
\gamma(y_0)=0
$
and
$$
\alpha(y_0,y_i)
=
y_i(y_i+y_0).
$$
Thus
$$
y_i\in\{0,-y_0\},
\qquad
1\leq i\leq s.
$$
For each of the $q-1$ choices of $y_0\in\mathbb F_q^\times$, there are
$2^s$ choices for $(y_1,\ldots,y_s)$. Hence the contribution from
$y_0\neq0$ is $2^s(q-1)$.

Therefore
$$
N_0(\mathcal G_L)
=
N_0(\mathcal G)+2^s(q-1).
$$
\end{proof}

Let $m\in I_n$ with $\Par(m)\in I_r$. If $L$ is applied to a family
of $r$ polynomials, Lemma~\ref{lem:L-N0} shows that $N_0$ increases by
$
2^r(q-1).
$
If $r=n-1$, this is the required increase.

If $r<n-1$, applying $L$ again is not suitable. A second
application of $L$ gives a total increase
$$
2^r(q-1)+2^{r+1}(q-1)
$$
in $N_0$, instead of the required
$
2^{n-1}(q-1).
$
We therefore introduce a second operator that leaves the contribution to
$N_0$ from $y_0=0$ unchanged and doubles the contribution from
$y_0\neq0$.

For $i\geq1$, set
\begin{equation}
\beta(Y_0,Y_i)
=
\alpha(Y_0,Y_i)+\gamma(Y_0)Y_i.
\label{eq:beta}
\end{equation}

\begin{defi}\label{def:extension}
For
$
\mathcal F=(F_1,\ldots,F_s),
$
define the operator $E$ by 
$$
E(\mathcal F)
=
\left(
F_1,\ldots,F_s,\beta(Y_0,Y_s)
\right).
$$
Set $E^0(\mathcal F)=\mathcal F$, and for $\ell\geq0$ define
$$
E^{\ell+1}(\mathcal F)
=
E\bigl(E^\ell(\mathcal F)\bigr).
$$
Thus, for $\ell\geq1$,
$$
E^\ell(\mathcal F)
=
\left(
F_1,\ldots,F_s,\,
\beta(Y_0,Y_s),\,
\beta(Y_0,Y_{s+1}),\,
\ldots,\,
\beta(Y_0,Y_{s+\ell-1})
\right).
$$
Then $E^\ell(\mathcal F)$ consists of $s+\ell$ polynomials in
$\mathbb F_q[Y_0,\ldots,Y_{s+\ell-1}]$.
\end{defi}

\begin{ex}\label{ex:E-F6}
Let
$
L(\mathcal F^{(6)})=(K_1,K_2,K_3)
$
be the family in Example~\ref{ex:L-F6}. Then
$$
E\left(L(\mathcal F^{(6)})\right)
=
\left(
K_1,K_2,K_3,\beta(Y_0,Y_3)
\right),
$$
and
$$
E^2\left(L(\mathcal F^{(6)})\right)
=
\left(
K_1,K_2,K_3,
\beta(Y_0,Y_3),
\beta(Y_0,Y_4)
\right).
$$
\end{ex}

The next lemma gives $N_0$ for $E^\ell(L(\mathcal F))$.
\begin{lemma}\label{lem:LE-N0}
Let $\mathcal G$ be a defining family with associated family
$
\mathcal F=(F_1,\ldots,F_s)
$
as in \eqref{eq:associated-family}. For $\ell\geq0$, let
$\mathcal G_\ell$ be a defining family whose associated family is
$
E^\ell(L(\mathcal F)).
$
Then
$
E^\ell(L(\mathcal F))
$
consists of $s+\ell+1$ polynomials in
$
\mathbb F_q[Y_0,\ldots,Y_{s+\ell}],
$
and
$$
N_0(\mathcal G_\ell)
=
N_0(\mathcal G)+2^{s+\ell}(q-1).
$$
\end{lemma}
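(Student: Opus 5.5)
The plan is to argue by induction on $\ell$. For $\ell=0$ the statement is exactly Lemma~\ref{lem:L-N0}, since $L(\mathcal F)$ consists of $s+1$ polynomials in $\mathbb F_q[Y_0,\ldots,Y_s]$.

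The polynomial count needs no real argument. By Definition~\ref{def:extension}, $E$ appends the single polynomial $\beta(Y_0,Y_{s+\ell})$ to a family of $s+\ell+1$ polynomials whose last member involves $Y_{s+\ell}$. This introduces exactly one new variable $Y_{s+\ell}$, so the counts follow at once.

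For the point count, rather than inducting on $N_0$ directly, I will prove a refined statement. Let $Z_\ell\subseteq\mathbb F_q^{s+\ell+1}$ be the common zero set of $E^\ell(L(\mathcal F))$. I claim that
\[
\#\{z\in Z_\ell: y_0=0\}=N_0(\mathcal G)
\qquad\text{and}\qquad
\#\{z\in Z_\ell: y_0\neq0\}=2^{s+\ell}(q-1).
\]
For $\ell=0$ this is precisely what the proof of Lemma~\ref{lem:L-N0} shows by separating the cases $y_0=0$ and $y_0\neq0$.

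For the inductive step, let $\mathcal G_{\ell+1}$ have associated family $E^{\ell+1}(L(\mathcal F))$. Its zero set $Z_{\ell+1}$ consists of the tuples $(z,y_{s+\ell+1})$ with $z\in Z_\ell$ and $\beta(y_0,y_{s+\ell+1})=0$. Evaluating \eqref{eq:beta} pointwise gives two cases.
\begin{itemize}
\item If $y_0=0$, then $\alpha(0,Y)=0$ and $\gamma(0)=1$, so the condition reads $y_{s+\ell+1}=0$. There is one extension, and the $y_0=0$ contribution stays $N_0(\mathcal G)$.
\item If $y_0\neq0$, then $\gamma(y_0)=0$ and the condition is $y_{s+\ell+1}(y_{s+\ell+1}+y_0)=0$. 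This has exactly the two distinct roots $0$ and $-y_0$, so the $y_0\neq 0$ contribution doubles to $2^{s+\ell+1}(q-1)$.
\end{itemize}
Summing the two contributions gives the formula for $N_0(\mathcal G_{\ell+1})$ by the definition of $N_0$ in Theorem~\ref{thm q(q-1)+qN_0}.

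The main obstacle is choosing an induction hypothesis strong enough to carry through. The total $N_0$ alone is not enough, because $E$ acts differently on the two fibres $y_0=0$ and $y_0\neq0$. That is why I track the two contributions separately. The only delicate point is checking that $0\neq -y_0$ when $y_0\neq0$, which holds in any characteristic.
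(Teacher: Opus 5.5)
Your proof is correct and is essentially the paper's argument. Both split the zero set according as $y_0=0$ or $y_0\neq0$, and both use that each appended equation $\beta(y_0,y_i)=0$ forces $y_i=0$ in the first case and gives $y_i\in\{0,-y_0\}$ in the second; the paper treats all $\ell$ appended equations at once (giving $2^\ell$ choices) instead of inducting on $\ell$. One small indexing slip: applying $E$ to $E^\ell(L(\mathcal F))$ appends $\beta(Y_0,Y_{s+\ell+1})$ and introduces the new variable $Y_{s+\ell+1}$, not $Y_{s+\ell}$, which is the index your inductive step already uses correctly.
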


\begin{proof}
By Lemma~\ref{lem:L-N0}, after applying $L$, the contribution from
$y_0=0$ is $N_0(\mathcal G)$, while the contribution from $y_0\neq0$
is $2^s(q-1)$.

For $E^\ell(L(\mathcal F))$, the additional equations are
$$
\beta(Y_0,Y_{s+1})=0,\,
\beta(Y_0,Y_{s+2})=0,\,
\ldots,\,
\beta(Y_0,Y_{s+\ell})=0.
$$
If $y_0=0$, then
$$
\beta(0,y_i)=y_i,
\qquad
s+1\leq i\leq s+\ell,
$$
so
$$
y_{s+1}=\cdots=y_{s+\ell}=0.
$$
Hence the contribution from $y_0=0$ remains $N_0(\mathcal G)$.

If $y_0\neq0$, then
$$
\beta(y_0,y_i)
=
y_i(y_i+y_0),
\qquad
s+1\leq i\leq s+\ell.
$$
Thus
$$
y_i\in\{0,-y_0\},
\qquad
s+1\leq i\leq s+\ell.
$$
The $\ell$ new variables give $2^\ell$ choices. Hence the contribution
from $y_0\neq0$ is
$$
2^s(q-1)2^\ell
=
2^{s+\ell}(q-1).
$$
Therefore
$$
N_0(\mathcal G_\ell)
=
N_0(\mathcal G)+2^{s+\ell}(q-1).
$$

Further, since $L(\mathcal F)$ has $s+1$ polynomials and $E^\ell$ adds $\ell$
polynomials, $E^\ell(L(\mathcal F))$ has $s+\ell+1$ polynomials.
\end{proof}

Let $m\in I_n$ and suppose that $\Par(m)\in I_r$. If $\mathcal F$
consists of $r$ polynomials, then Lemma~\ref{lem:LE-N0} shows that
$
E^\ell(L(\mathcal F))
$
has $r+\ell+1$ polynomials and increases $N_0$ by
$
2^{r+\ell}(q-1).
$
To obtain the required increase $2^{n-1}(q-1)$, we need
$$
2^{r+\ell}(q-1)
=
2^{n-1}(q-1),
$$
and hence
$$
\ell=n-r-1.
$$

If $m\in A_{n,j}$ and $\Par(m)\in A_{r,s}$, then the corresponding recursive step is given by the operator $E^{\,n-r-1}\circ L$, as indicated in Figure~\ref{fig:block-dependency-B4}.

\begin{defi}\label{def:extension-number}
For every even integer $m\geq12$, define
$$
\nu(m)
=
n(m)-n(\Par(m))-1.
$$
\end{defi}

If $m\in I_n$ and $\Par(m)\in I_r$, then
Corollary~\ref{cor:parent-bijection} gives $r<n$. Hence
$$
\nu(m)=n-r-1\geq0.
$$

\begin{defi}\label{def:recursive-even-family}
For every even integer $m\geq12$, define recursively
$$
\mathcal F^{(m)}
=
E^{\nu(m)}
\left(
L\bigl(\mathcal F^{(\Par(m))}\bigr)
\right).
$$
\end{defi}

By Lemma~\ref{lem:parent-termination}, repeated application of $\Par$
on $m$ reaches an element of $I_2$. Hence Definition~\ref{def:recursive-even-family},
together with the initial families in \eqref{eq:initial-Fm}, defines
$\mathcal F^{(m)}$ for every even integer $m\geq4$. 

\begin{lemma}\label{lem:recursive-family-size}
Let $m\in I_n$. Then the associated family $\mathcal F^{(m)}$ defined
by \eqref{eq:initial-Fm} when $n=2$, and by
Definition~\ref{def:recursive-even-family} when $n\geq3$, consists of
$n=n(m)$ polynomials.
\end{lemma}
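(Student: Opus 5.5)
The plan is to argue by strong induction on $n=n(m)$, or equivalently along the parent chain of $m$ given by Lemma~\ref{lem:parent-termination}. The claim is that $\mathcal F^{(m)}$ has exactly $n(m)$ polynomials.

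\emph{Base case $n=2$.} Here $m\in I_2=\{4,6,8,10\}$, and by \eqref{eq:initial-Fm} the family is $\mathcal F^{(m)}=\bigl(Q(Y_0),\sigma_m(Y_0,Y_1)\bigr)$. This visibly consists of two polynomials, so the base case is immediate.

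\emph{Inductive step.} Let $n\geq3$ and $m\in I_n$, and assume the claim for every even $m'$ with $n(m')<n$. By Corollary~\ref{cor:parent-bijection}, $\Par(m)\in I_r$ for some $2\leq r\leq n-1$. The induction hypothesis then says that $\mathcal F^{(\Par(m))}$ consists of $r$ polynomials. Definition~\ref{def:recursive-even-family} gives
$$
\mathcal F^{(m)}=E^{\nu(m)}\bigl(L(\mathcal F^{(\Par(m))})\bigr),\qquad \nu(m)=n-r-1\geq0 .
$$
We apply Lemma~\ref{lem:LE-N0} with $s=r$ and $\ell=\nu(m)$; more directly, we use Definitions~\ref{def:L-operation} and~\ref{def:extension}. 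The operator $L$ turns a family of $r$ polynomials into one of $r+1$ polynomials, and each application of $E$ adds one more. So $\mathcal F^{(m)}$ has $r+1+(n-r-1)=n$ polynomials, as required.

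The only point needing care is that the size count in Lemma~\ref{lem:LE-N0} depends only on the number of polynomials in $\mathcal F$, and not on $\mathcal F$ arising from a particular defining family $\mathcal G$. This is clear from the definitions of $L$ and $E$, which act on lists of polynomials. We also need $n(\Par(m))<n(m)$ so that the induction is well-founded; this is exactly Corollary~\ref{cor:parent-bijection}. No real obstacle is expected; the proof is a bookkeeping induction.
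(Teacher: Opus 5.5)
Your proposal is correct and follows the paper's proof essentially verbatim. Both use induction on $n$ with the base case $I_2$ (two polynomials), and in the inductive step use $\Par(m)\in I_r$ with $r<n$ together with the fact that $L$ adds one polynomial and $E^{\nu(m)}$ adds $\nu(m)=n-r-1$, for a total of $r+1+(n-r-1)=n$.
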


\begin{proof}
We use induction on $n$.

If $n=2$, then
$
m\in I_2=\{4,6,8,10\},
$
and each family in \eqref{eq:initial-Fm} consists of two polynomials.

Let $n\geq3$ and suppose that
$
\Par(m)\in I_r.
$
Since $r<n$, the induction hypothesis shows that
$\mathcal F^{(\Par(m))}$ consists of $r$ polynomials. Since
$
\nu(m)=n-r-1,
$
Definitions~\ref{def:L-operation}, \ref{def:extension}, and
\ref{def:recursive-even-family} show that
$\mathcal F^{(m)}$ consists of
$$
r+1+\nu(m)
=
r+1+(n-r-1)
=
n
$$
polynomials.
\end{proof}
Let $m\in I_n$ with $\Par(m)\in I_r$. Let
$\mathcal G^{(m)}$ and $\mathcal G^{(\Par(m))}$ be defining families
whose associated families, as in \eqref{eq:associated-family}, are
$\mathcal F^{(m)}$ and $\mathcal F^{(\Par(m))}$, respectively.
By
Lemma~\ref{lem:recursive-family-size},
$\mathcal F^{(\Par(m))}$ consists of $r$ polynomials. Since
$
\nu(m)=n-r-1,
$
Lemma~\ref{lem:LE-N0} gives
\begin{equation}\label{eq:recursive-N0}
N_0(\mathcal G^{(m)})
=
N_0(\mathcal G^{(\Par(m))})
+
2^{n-1}(q-1).
\end{equation}

If
$$
m=m_t\longmapsto m_{t-1}\longmapsto\cdots
\longmapsto m_1\longmapsto m_0
$$
is the parent chain of $m$, then repeated application of
Definition~\ref{def:recursive-even-family} gives
$$
\mathcal F^{(m)}
=
\left(E^{\nu(m_t)}\circ L\right)
\circ
\left(E^{\nu(m_{t-1})}\circ L\right)
\circ\cdots\circ
\left(E^{\nu(m_1)}\circ L\right)
\left(\mathcal F^{(m_0)}\right).
$$

Figure~\ref{fig:recursive-step} summarizes the passage from $\mathcal F^{(\Par(m))}$ to $\mathcal F^{(m)}$, together with the corresponding change in $N_0$ and in the number of polynomials.

\begin{figure}[htbp]
\centering
\small
\setlength{\fboxsep}{8pt}
\fbox{%
\begin{minipage}{0.92\textwidth}
\centering
\renewcommand{\arraystretch}{1.8}
$
\begin{array}{@{}r@{\qquad}c@{\quad}c@{\quad}c@{\quad}c@{\quad}c@{}}
\text{\rm even value}
&
\Par(m)\in I_r
&
\multicolumn{3}{c}{
\xrightarrow{\qquad m-\Par(m)=2^n\qquad}
}
&
m\in I_n
\\
\text{\rm associated family}
&
\mathcal F^{(\Par(m))}
&
\xrightarrow{\ L\ }
&
L\left(\mathcal F^{(\Par(m))}\right)
&
\xrightarrow{\ E^{\nu(m)}\ }
&
\mathcal F^{(m)}
\\
\text{\rm increase in }N_0
&
0
&
\xrightarrow{\ L\ }
&
2^r(q-1)
&
\xrightarrow{\ \times 2^{\nu(m)}\ }
&
2^{n-1}(q-1)
\\

\text{\rm number of polynomials}
&
r
&
\xrightarrow{\ +1\ }
&
r+1
&
\xrightarrow{\ +\nu(m)\ }
&
n
\end{array}
$
\end{minipage}%
}
\caption{The recursive step from $\Par(m)$ to $m$.}
\label{fig:recursive-step}
\end{figure}

\subsection{Realization of even polygonal numbers}

We now prove the point-count formula for the recursively constructed families.

\begin{theorem}\label{thm:even-polygonal-realization}
For every even integer $m\geq4$ and every prime power $q$, there exists
a defining family $\mathcal G^{(m)}$ such that
$$
\#S_{\mathcal G^{(m)}}(\mathbb F_q)
=
P_m(q)
=
\frac{(m-2)q^2-(m-4)q}{2}.
$$
Consequently,
$$
\left(
\#S_{\mathcal G^{(m)}}(\mathbb F_q)
\right)_{q\in\mathcal Q}
=
\left(
P_m(q)
\right)_{q\in\mathcal Q},
$$
the prime-power subsequence of the $m$-gonal numbers.
\end{theorem}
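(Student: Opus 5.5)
The plan is to show, by strong induction on $n=n(m)$, that for every even $m\geq4$ and every prime power $q$
$$
N_0(\mathcal G^{(m)})=1+\frac{m-4}{2}(q-1),
$$
where $\mathcal G^{(m)}$ is any defining family whose associated family is $\mathcal F^{(m)}$. Such a family exists: choose any positive integers $d_i$ and polynomials $R_i$ in \eqref{eq:Gi-FR}, for instance $d_i=1$ and $R_i=0$. Theorem~\ref{thm q(q-1)+qN_0} then gives
$$
\#S_{\mathcal G^{(m)}}(\mathbb F_q)=q(q-1)+q\Bigl(1+\tfrac{m-4}{2}(q-1)\Bigr)=q^2+\tfrac{m-4}{2}q(q-1).
$$
Expanding, this equals $\frac{(m-2)q^2-(m-4)q}{2}=P_m(q)$, as already observed in the proof of Proposition~\ref{prop:q2-suffices}. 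Letting $q$ range over $\mathcal Q$ yields the stated equality of sequences.

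For the base case $n=2$, that is $m\in I_2=\{4,6,8,10\}$, the formula for $N_0$ is exactly what was established in the proof of Proposition~\ref{prop:initial-even-systems}. There the first polynomial vanishes identically on $\mathbb F_q$, and the zeros of $\sigma_m$ are counted line by line.

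For the inductive step, take $m\in I_n$ with $n\geq3$, and let $\Par(m)\in I_r$. By Corollary~\ref{cor:parent-bijection} we have $2\leq r<n$, so the induction hypothesis applies to $\Par(m)$. By Lemma~\ref{lem:recursive-family-size}, $\mathcal F^{(\Par(m))}$ consists of $r$ polynomials. Definition~\ref{def:recursive-even-family} gives $\mathcal F^{(m)}=E^{\nu(m)}(L(\mathcal F^{(\Par(m))}))$ with $\nu(m)=n-r-1$. Lemma~\ref{lem:LE-N0}, applied with $s=r$ and $\ell=\nu(m)$, then yields \eqref{eq:recursive-N0}:
$$
N_0(\mathcal G^{(m)})=N_0(\mathcal G^{(\Par(m))})+2^{n-1}(q-1).
$$
By the induction hypothesis, $N_0(\mathcal G^{(\Par(m))})=1+\frac{\Par(m)-4}{2}(q-1)$. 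Since $\Par(m)=m-2^n$, we have $\frac{\Par(m)-4}{2}+2^{n-1}=\frac{m-4}{2}$, which closes the induction. Equivalently, at the level of point counts this is Lemma~\ref{lem:polygonal-parent-difference}.

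I expect no serious obstacle, since the earlier lemmas carry all the counting. The one point needing care is that Lemma~\ref{lem:LE-N0} depends only on the associated family. Its proof uses only the equations of $L(\mathcal F)$ and $E^\ell(L(\mathcal F))$ together with the value $N_0(\mathcal G)$, and by Theorem~\ref{thm q(q-1)+qN_0} that value is determined by $\mathcal F$ alone. So any choice of $\mathcal G^{(\Par(m))}$ may be used in the induction.

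A second point is the well-definedness of the polynomials $\Delta,\gamma,\alpha,\beta$. They depend on $q$ through $Y_0^{q-1}$, so the family $\mathcal F^{(m)}$, and hence $\mathcal G^{(m)}$, is constructed separately for each $q$. This is consistent with the quantifier order in the statement, where $\mathcal G^{(m)}$ is chosen after $q$ is fixed.
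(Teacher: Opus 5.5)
Your proof is correct and follows essentially the same route as the paper. The base case comes from Proposition~\ref{prop:initial-even-systems}, and the inductive step comes from \eqref{eq:recursive-N0}, that is, Lemma~\ref{lem:LE-N0} with $\ell=\nu(m)$. The only difference is bookkeeping: you carry the closed form $N_0(\mathcal G^{(m)})=1+\frac{m-4}{2}(q-1)$ through an induction on $n(m)$, whereas the paper inducts along the parent chain on point counts and matches the increments using Lemma~\ref{lem:polygonal-parent-difference}.
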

\begin{proof}
By \eqref{eq:Gi-FR}, for each associated family $\mathcal F^{(m)}$
there exists a defining family $\mathcal G^{(m)}$ having
$\mathcal F^{(m)}$ as its associated family.

If $m\in I_2=\{4,6,8,10\}$, the result follows from
Proposition~\ref{prop:initial-even-systems}.

Now let $m\in I_n$ with $n\geq3$, and let
$$
m=m_t\longmapsto m_{t-1}\longmapsto\cdots
\longmapsto m_1\longmapsto m_0
$$
be its parent chain. We proceed by induction along this parent chain. For each $0\leq i\leq t$, let
$\mathcal G^{(m_i)}$ be a defining family whose associated family is
$\mathcal F^{(m_i)}$. We successively prove that
$$
\#S_{\mathcal G^{(m_i)}}(\mathbb F_q)
=
P_{m_i}(q),
\qquad
0\leq i\leq t.
$$

Since $m_0\in I_2=\{4,6,8,10\}$, the case $i=0$ follows from
Proposition~\ref{prop:initial-even-systems}.

Let $1\leq i\leq t$, and suppose that
$$
\#S_{\mathcal G^{(m_{i-1})}}(\mathbb F_q)
=
P_{m_{i-1}}(q).
$$
Write
$
m_i\in I_s
$
and
$
m_{i-1}=\Par(m_i)\in I_r.
$
By \eqref{eq:recursive-N0},
$$
N_0(\mathcal G^{(m_i)})
=
N_0(\mathcal G^{(m_{i-1})})
+
2^{s-1}(q-1).
$$
Therefore, by Theorem~\ref{thm q(q-1)+qN_0},
$$
\#S_{\mathcal G^{(m_i)}}(\mathbb F_q)
-
\#S_{\mathcal G^{(m_{i-1})}}(\mathbb F_q)
=
2^{s-1}q(q-1).
$$
By Lemma~\ref{lem:polygonal-parent-difference},
$$
P_{m_i}(q)-P_{m_{i-1}}(q)
=
2^{s-1}q(q-1).
$$
Hence
$$
\#S_{\mathcal G^{(m_i)}}(\mathbb F_q)
=
\#S_{\mathcal G^{(m_{i-1})}}(\mathbb F_q)
+
2^{s-1}q(q-1).
$$
Using the induction hypothesis
$
\#S_{\mathcal G^{(m_{i-1})}}(\mathbb F_q)
=
P_{m_{i-1}}(q),
$
we get
$$
\#S_{\mathcal G^{(m_i)}}(\mathbb F_q)
=
P_{m_{i-1}}(q)
+
2^{s-1}q(q-1)
=
P_{m_i}(q).
$$
Taking $i=t$ gives
$$
\#S_{\mathcal G^{(m)}}(\mathbb F_q)
=
P_m(q)
=
\frac{(m-2)q^2-(m-4)q}{2}.
$$
The equality of the prime-power subsequences follows.
\end{proof}

\begin{ex}\label{ex:recursive-358}
Consider the parent chain from Example~\ref{ex:parent-chain-358},
$$
358\longmapsto102\longmapsto38\longmapsto6.
$$
By Proposition~\ref{prop:level-of-m},
$$
\begin{aligned}
n(358)&=\left\lfloor\log_2(358-4)\right\rfloor=8,\\
n(102)&=\left\lfloor\log_2(102-4)\right\rfloor=6,\\
n(38)&=\left\lfloor\log_2(38-4)\right\rfloor=5.
\end{aligned}
$$
Since $6\in I_2$, Definition~\ref{def:In} gives $n(6)=2$.
Hence Definition~\ref{def:extension-number} gives
$$
\nu(38)=2,
\qquad
\nu(102)=0,
\qquad
\nu(358)=1.
$$

By Definition~\ref{def:recursive-even-family},
$$
\mathcal F^{(38)}
=
E^2\left(L(\mathcal F^{(6)})\right),
$$
$$
\mathcal F^{(102)}
=
L(\mathcal F^{(38)}),
$$
and
$$
\mathcal F^{(358)}
=
E\left(L(\mathcal F^{(102)})\right).
$$
Thus
$$
\mathcal F^{(6)}
\xrightarrow{\,E^2\circ L\,}
\mathcal F^{(38)}
\xrightarrow{\,L\,}
\mathcal F^{(102)}
\xrightarrow{\,E\circ L\,}
\mathcal F^{(358)},
$$
or equivalently,
$$
\mathcal F^{(358)}
=
(E\circ L)\circ L\circ(E^2\circ L)
\left(\mathcal F^{(6)}\right).
$$

Writing
$$
\mathcal F^{(358)}
=
\left(
F_1^{(358)},\ldots,F_8^{(358)}
\right),
$$
successive applications of Definitions~\ref{def:L-operation}
and~\ref{def:extension} give
\begin{align*}
F_1^{(358)}
&=Y_0^q-Y_0,\\[1mm]
F_2^{(358)}
&=Y_0^{q-1}Y_1(Y_1+Y_0)
 +(1-Y_0^{q-1})(Y_1^q-Y_1),\\[1mm]
F_3^{(358)}
&=Y_0^{q-1}Y_2(Y_2+Y_0)\\
&\quad +(1-Y_0^{q-1})
\left[
Y_1^{q-1}Y_2(Y_2+Y_1)
 +(1-Y_1^{q-1})(Y_2^q-Y_2)
\right],\\[1mm]
F_4^{(358)}
&=Y_0^{q-1}Y_3(Y_3+Y_0)\\
&\quad +(1-Y_0^{q-1})
\left[
Y_1^{q-1}Y_3(Y_3+Y_1)
 +(1-Y_1^{q-1})
\left(
Y_2^{q-1}Y_3(Y_3+Y_2)
 +(1-Y_2^{q-1})(Y_3^q-Y_3)
\right)
\right],\\[1mm]
F_5^{(358)}
&=Y_0^{q-1}Y_4(Y_4+Y_0)\\
&\quad +(1-Y_0^{q-1})
\left[
Y_1^{q-1}Y_4(Y_4+Y_1)
 +(1-Y_1^{q-1})
\left(
Y_2^{q-1}Y_4(Y_4+Y_2)
 +(1-Y_2^{q-1})Y_4
\right)
\right],\\[1mm]
F_6^{(358)}
&=Y_0^{q-1}Y_5(Y_5+Y_0)\\
&\quad +(1-Y_0^{q-1})
\left[
Y_1^{q-1}Y_5(Y_5+Y_1)
 +(1-Y_1^{q-1})
\left(
Y_2^{q-1}Y_5(Y_5+Y_2)
 +(1-Y_2^{q-1})Y_5
\right)
\right],\\[1mm]
F_7^{(358)}
&=Y_0^{q-1}Y_6(Y_6+Y_0)\\
&\quad +(1-Y_0^{q-1})
\left[
Y_1^{q-1}Y_6(Y_6+Y_1)
 +(1-Y_1^{q-1})
\left(
Y_2^{q-1}Y_6(Y_6+Y_2)
 +(1-Y_2^{q-1})Y_6
\right)
\right],\\[1mm]
F_8^{(358)}
&=Y_0^{q-1}Y_7(Y_7+Y_0)
 +(1-Y_0^{q-1})Y_7.
\end{align*}

Thus $\mathcal F^{(358)}$ consists of eight polynomials. By
Theorem~\ref{thm:even-polygonal-realization}, every defining family
$\mathcal G^{(358)}$ whose associated family is
$\mathcal F^{(358)}$ satisfies
$$
\#S_{\mathcal G^{(358)}}(\mathbb F_q)
=
P_{358}(q).
$$
\end{ex}

\section{Polynomial families realizing odd polygonal numbers}
\label{sec:odd-polygonal-surfaces}

Let $m\geq3$ be odd. By Corollary~\ref{cor:polygonal-bound}, no family
$\mathcal G$ of polynomials satisfying
\eqref{eq:defining-polynomials} can satisfy
$$
\#S_{\mathcal G}(\mathbb F_q)=P_m(q)
$$
for every prime power $q$. Since $m+1$ is even, Theorem~\ref{thm:even-polygonal-realization}
provides a defining family $\mathcal G^{(m+1)}$ such that
$$
\#S_{\mathcal G^{(m+1)}}(\mathbb F_q)
=
P_{m+1}(q).
$$
By Corollary~\ref{cor:x-nonzero-fiber},
$S_{\mathcal G^{(m+1)}}(\mathbb F_q)$ has $q(q-1)$ points with
$x\neq0$.
Since
$$
P_{m+1}(q)-P_m(q)
=
\frac{q(q-1)}{2},
$$
it is enough to retain all points with $x=0$ and exactly half of the
points with $x\neq0$. This motivates the auxiliary polynomial introduced
below.

\subsection{The auxiliary polynomial}
\label{subsec:odd-auxiliary-polynomial}

Let $q=p^r$, where $p$ is prime. For $a\in\mathbb F_q$, the absolute
trace of $a$ is
$$
\operatorname{Tr}_{\mathbb F_q/\mathbb F_p}(a)
=
a+a^p+a^{p^2}+\cdots+a^{p^{r-1}}.
$$

Set
$$
\Lambda_q
=
\frac{q(q-1)}{2},
$$
and define
$$
\Theta(X,Y_0)
=
2^{q-1}X\left(X^{\Lambda_q}-1\right)
+
\left(1-2^{q-1}\right)
X^{q-1}
\operatorname{Tr}_{\mathbb F_q/\mathbb F_p}(Y_0).
$$

For a defining family
$
\mathcal G=(G_1,\ldots,G_n)
$
as in \eqref{eq:defining-polynomials}, with
$S_{\mathcal G}(\mathbb F_q)$ as in \eqref{eq:SG}, define
$$
S_{\mathcal G}^{\Theta}(\mathbb F_q)
=
\left\{
(x,y_0,\ldots,y_n)\in S_{\mathcal G}(\mathbb F_q):
\Theta(x,y_0)=0
\right\}.
$$

We recall two standard finite-field counting results.

\begin{lemma}[\cite{IrelandRosen1990}, Proposition 7.1.2]
\label{lem:binomial-solvability}
Let $a\in\mathbb F_q^\times$, let $N\geq1$, and set
$
d=\gcd(N,q-1).
$
Then the equation
$
X^N=a
$
has a solution in $\mathbb F_q$ if and only if
$
a^{(q-1)/d}=1.
$
If it has a solution, then it has exactly $d$ solutions in
$\mathbb F_q$.
\end{lemma}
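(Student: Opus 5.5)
The plan is to reduce everything to a linear congruence modulo $q-1$ by using the cyclic structure of the multiplicative group. The one input I will take as known is that $\mathbb F_q^\times$ is cyclic of order $q-1$; fix a generator $g$.

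First, since $a\neq0$, the element $x=0$ is never a solution of $X^N=a$. So all solutions lie in $\mathbb F_q^\times$.

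Next, write $a=g^k$ with $0\leq k<q-1$. Every $x\in\mathbb F_q^\times$ has the form $x=g^t$, where $t$ is determined modulo $q-1$. The equation $x^N=a$ then becomes $g^{Nt}=g^k$. Because $g$ has order exactly $q-1$, this is equivalent to the congruence
$$
Nt\equiv k \pmod{q-1}.
$$
Counting solutions $x\in\mathbb F_q^\times$ is therefore the same as counting residues $t$ modulo $q-1$ satisfying this congruence.

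By the elementary theory of linear congruences, this congruence is solvable if and only if $d=\gcd(N,q-1)$ divides $k$. When it is solvable, it has exactly $d$ solutions modulo $q-1$: dividing through by $d$ gives a unique residue class modulo $(q-1)/d$, and that class lifts to $d$ classes modulo $q-1$. Since $t\mapsto g^t$ is a bijection from $\mathbb Z/(q-1)$ to $\mathbb F_q^\times$, these give exactly $d$ distinct solutions $x$.

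It remains to translate the condition $d\mid k$ into the criterion in the statement. We have
$$
a^{(q-1)/d}=g^{k(q-1)/d},
$$
and this equals $1$ if and only if $(q-1)\mid k(q-1)/d$. That divisibility holds if and only if $d\mid k$.

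The only point that needs care is the repeated use of the fact that $g$ has order exactly $q-1$. It is used both to pass from equality of powers of $g$ to the congruence, and to get injectivity of $t\mapsto g^t$ on residues modulo $q-1$. Beyond that, the argument consists of standard steps.
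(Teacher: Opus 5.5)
Your proof is correct. It writes $a=g^k$ and $x=g^t$ for a generator $g$ of the cyclic group $\mathbb F_q^\times$, reduces $x^N=a$ to the congruence $Nt\equiv k \pmod{q-1}$, and translates $d\mid k$ into $a^{(q-1)/d}=1$; this is the standard argument of Ireland--Rosen, Proposition 7.1.2, which the paper only cites and does not prove.
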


\begin{lemma}[\cite{mullen2013handbook}, Theorem 2.1.83(7)]
\label{lem:trace-fiber}
Let $F/K$ be a finite extension of finite fields with
$
[F:K]=n
$
and
$
|K|=N.
$
Then, for every $\alpha\in K$,
$$
\#\left\{
\beta\in F:
\operatorname{Tr}_{F/K}(\beta)=\alpha
\right\}
=
N^{n-1}.
$$
\end{lemma}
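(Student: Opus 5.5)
The statement to prove is Lemma~\ref{lem:trace-fiber}: every fibre of $\operatorname{Tr}_{F/K}$ has exactly $N^{n-1}$ elements. The plan is to show that $\operatorname{Tr}_{F/K}\colon F\to K$ is a surjective $K$-linear map. Then every fibre is a coset of the kernel, and counting the kernel finishes the argument.

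\textbf{Linearity.} Write $\sigma(\beta)=\beta^N$ for the Frobenius generator of $\operatorname{Gal}(F/K)$, so that
$$
\operatorname{Tr}_{F/K}(\beta)=\beta+\beta^N+\cdots+\beta^{N^{n-1}}.
$$
Each map $\beta\mapsto\beta^{N^i}$ is additive, because raising to a power of the characteristic is additive. Each such map also fixes $K$, since $c^N=c$ for every $c\in K$. Hence $\operatorname{Tr}_{F/K}(c\beta)=c\operatorname{Tr}_{F/K}(\beta)$ for $c\in K$, and the trace is $K$-linear. Its values lie in $K$ because $\operatorname{Tr}_{F/K}(\beta)^N=\operatorname{Tr}_{F/K}(\beta)$: raising to the $N$-th power permutes the summands cyclically, using $\beta^{N^n}=\beta$.

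\textbf{Surjectivity.} This is the main step. The trace is a polynomial in $\beta$ of degree $N^{n-1}$. Since $N^{n-1}<N^n=|F|$, it cannot vanish at every element of $F$, so $\operatorname{Tr}_{F/K}$ is not identically zero. A nonzero linear functional onto the one-dimensional $K$-space $K$ is surjective.

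\textbf{Counting.} The kernel of $\operatorname{Tr}_{F/K}$ is a $K$-subspace of dimension $n-1$, so it has $N^{n-1}$ elements. For each $\alpha\in K$, surjectivity gives some $\beta_0\in F$ with $\operatorname{Tr}_{F/K}(\beta_0)=\alpha$. The fibre over $\alpha$ is then the coset $\beta_0+\ker\operatorname{Tr}_{F/K}$, which has exactly $N^{n-1}$ elements.
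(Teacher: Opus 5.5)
Your proof is correct and complete. Each step holds: the Frobenius sum is $K$-linear with values in $K$, the trace is surjective because it is a nonzero polynomial function of degree $N^{n-1}<N^n=|F|$, and each fibre is a coset of the $(n-1)$-dimensional kernel. The paper gives no proof of its own here; it simply quotes the result from the Handbook of Finite Fields. Your argument is the standard textbook proof of that fact, so it would make the paper self-contained at this point. The paper only uses the case $K=\mathbb F_2$, $\alpha=0$ (in Lemma~\ref{lem:theta-half-count}), which your argument covers.
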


\begin{lemma}\label{lem:theta-half-count}
For every $y_0\in\mathbb F_q$,
$
\Theta(0,y_0)=0.
$
Moreover,
$$
\#\left\{
(x,y_0)\in\mathbb F_q^\times\times\mathbb F_q:
\Theta(x,y_0)=0
\right\}
=
\frac{q(q-1)}{2}.
$$
\end{lemma}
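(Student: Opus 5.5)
The plan is to split according to the characteristic $p$ of $\mathbb F_q$. The point is that the integer coefficient $2^{q-1}$ in $\Theta$ is read as an element of $\mathbb F_q$. If $p$ is odd, then $2\in\mathbb F_q^\times$, so Fermat's little theorem in $\mathbb F_q^\times$ gives $2^{q-1}=1$. If $p=2$, then $2=0$ in $\mathbb F_q$, so $2^{q-1}=0$. Hence
$$
\Theta(X,Y_0)=
\begin{cases}
X\left(X^{\Lambda_q}-1\right), & p \text{ odd},\\
X^{q-1}\operatorname{Tr}_{\mathbb F_q/\mathbb F_2}(Y_0), & p=2.
\end{cases}
$$
Making this reduction explicit is the only delicate step, since everything afterwards is a routine count. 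The first assertion then follows at once: in both cases $\Theta(0,y_0)=0$, because $X$ divides the first expression and $q-1\geq1$ in the second.

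For $p$ odd, the condition on $(x,y_0)\in\mathbb F_q^\times\times\mathbb F_q$ is simply $x^{\Lambda_q}=1$, with $y_0$ free. I will apply Lemma~\ref{lem:binomial-solvability} with $N=\Lambda_q$ and $a=1$. Since $\gcd(q,q-1)=1$ and $(q-1)/2$ is an integer,
$$
d=\gcd\!\left(q\cdot\tfrac{q-1}{2},\,q-1\right)=\gcd\!\left(\tfrac{q-1}{2},\,q-1\right)=\tfrac{q-1}{2}.
$$
The solvability condition $1^{(q-1)/d}=1$ holds trivially, so there are exactly $(q-1)/2$ admissible values of $x$. Every solution of $x^{\Lambda_q}=1$ is automatically nonzero, so all of them lie in $\mathbb F_q^\times$. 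As a cross-check, $x^{\Lambda_q}=\bigl(x^{(q-1)/2}\bigr)^q$, and since Frobenius is bijective this is the set of nonzero squares. Multiplying by the $q$ free choices of $y_0$ gives $q(q-1)/2$.

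For $p=2$ with $q=2^r$, every $x\neq0$ satisfies $x^{q-1}=1$, so the condition reduces to $\operatorname{Tr}_{\mathbb F_q/\mathbb F_2}(y_0)=0$, with $x\in\mathbb F_q^\times$ free. Lemma~\ref{lem:trace-fiber}, applied with $K=\mathbb F_2$, $N=2$ and $[F:K]=r$, gives $2^{r-1}=q/2$ values of $y_0$. The total count is therefore $(q-1)\cdot q/2$, which again equals $q(q-1)/2$. This completes both cases.
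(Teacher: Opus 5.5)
Your proposal is correct and follows the paper's proof essentially step for step: you split by characteristic, reduce $2^{q-1}$ to $1$ or $0$, and use Lemma~\ref{lem:binomial-solvability} for odd $p$ and Lemma~\ref{lem:trace-fiber} for $p=2$. The only cosmetic difference is that you apply Lemma~\ref{lem:binomial-solvability} directly with $N=\Lambda_q$, using $\gcd(q,q-1)=1$ to get $d=(q-1)/2$, whereas the paper first rewrites $x^{\Lambda_q}=x^{(q-1)/2}$ and then applies the lemma with $N=(q-1)/2$.
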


\begin{proof}
For every $y_0\in\mathbb F_q$, we have
$
\Theta(0,y_0)=0.
$

We consider two cases.

\medskip
\noindent
Case 1: Suppose that $p$ is odd. Then $2\neq0$ in $\mathbb F_q$, and hence
$
2^{q-1}=1.
$
Therefore
$$
\Theta(X,Y_0)
=
X\left(X^{\Lambda_q}-1\right).
$$
For $x\in\mathbb F_q^\times$, the equation
$
\Theta(x,y_0)=0
$
is equivalent to
$
x^{\Lambda_q}=1.
$

Since $p$ is odd, $q$ is odd, and hence $(q-1)/2$ is an integer. Moreover,
$$
\Lambda_q
=
\frac{q(q-1)}{2}
=
\frac{q-1}{2}
+
\frac{q-1}{2}(q-1).
$$
Thus, using $x^{q-1}=1$,
$$
x^{\Lambda_q}
=
x^{(q-1)/2}
\left(x^{q-1}\right)^{(q-1)/2}
=
x^{(q-1)/2}.
$$
Hence
$$
x^{\Lambda_q}=1
$$
is equivalent to
$$
x^{(q-1)/2}=1.
$$
By Lemma~\ref{lem:binomial-solvability}, applied with
$
a=1
$
and
$
N=(q-1)/2,
$
this equation has
$$
\gcd\left(\frac{q-1}{2},q-1\right)
=
\frac{q-1}{2}
$$
solutions in $\mathbb F_q^\times$. Since $y_0$ is arbitrary, the number
of pairs $(x,y_0)$ is
$$
q\cdot\frac{q-1}{2}
=
\frac{q(q-1)}{2}.
$$

\medskip
\noindent
Case 2: Suppose that $p=2$. Then $2=0$ in $\mathbb F_q$, and hence
$
2^{q-1}=0.
$
Therefore
$$
\Theta(X,Y_0)
=
X^{q-1}
\operatorname{Tr}_{\mathbb F_q/\mathbb F_2}(Y_0).
$$
For $x\neq0$, we have $x^{q-1}=1$. Hence
$
\Theta(x,y_0)=0
$
is equivalent to
$$
\operatorname{Tr}_{\mathbb F_q/\mathbb F_2}(y_0)=0.
$$
Since $q=2^r$, Lemma~\ref{lem:trace-fiber}, applied to
$
\mathbb F_q/\mathbb F_2,
$
gives
$$
\#\left\{
y_0\in\mathbb F_q:
\operatorname{Tr}_{\mathbb F_q/\mathbb F_2}(y_0)=0
\right\}
=
2^{r-1}
=
\frac{q}{2}.
$$
Thus there are $q-1$ choices for $x$ and $q/2$ choices for $y_0$.
Therefore the number of pairs $(x,y_0)$ is
$$
(q-1)\frac{q}{2}
=
\frac{q(q-1)}{2}.
$$
\end{proof}

\subsection{Construction for odd values of $m$}
\label{subsec:odd-polygonal-construction}

\begin{defi}\label{def:odd-family}
Let $m\geq3$ be odd, and let
$$
\mathcal G^{(m+1)}
=
\left(G_1^{(m+1)},\ldots,
G_{n(m+1)}^{(m+1)}\right)
$$
be a defining family as in
Theorem~\ref{thm:even-polygonal-realization}.
Define
$$
S_{\mathcal G^{(m+1)}}^\Theta(\mathbb F_q)
=
\left\{
(x,y_0,\ldots,y_{n(m+1)})
\in S_{\mathcal G^{(m+1)}}(\mathbb F_q):
\Theta(x,y_0)=0
\right\}.
$$
\end{defi}

\begin{theorem}\label{thm:odd-polygonal-realization}
For every odd integer $m\geq3$ and every prime power $q$, there exists
a defining family $\mathcal G^{(m+1)}$ such that
$$
\#S_{\mathcal G^{(m+1)}}^\Theta(\mathbb F_q)
=
P_m(q)
=
\frac{(m-2)q^2-(m-4)q}{2}.
$$
Consequently,
$$
\left(
\#S_{\mathcal G^{(m+1)}}^\Theta(\mathbb F_q)
\right)_{q\in\mathcal Q}
=
\left(
P_m(q)
\right)_{q\in\mathcal Q},
$$
the prime-power subsequence of the $m$-gonal numbers.
\end{theorem}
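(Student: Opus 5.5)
We take $\mathcal G^{(m+1)}$ to be the defining family supplied by Theorem~\ref{thm:even-polygonal-realization} for the even integer $m+1\geq4$. We then split $S^\Theta_{\mathcal G^{(m+1)}}(\mathbb F_q)$ into points with $x=0$ and points with $x\neq0$, exactly as in the proof of Theorem~\ref{thm q(q-1)+qN_0}.

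\emph{Points with $x=0$.} By Lemma~\ref{lem:theta-half-count}, $\Theta(0,y_0)=0$ for every $y_0$, so the extra condition $\Theta=0$ removes nothing here. The $x=0$ part of $S_{\mathcal G^{(m+1)}}(\mathbb F_q)$ therefore survives intact. By the proof of Theorem~\ref{thm q(q-1)+qN_0} it has $qN_0(\mathcal G^{(m+1)})$ points. Combining Theorem~\ref{thm q(q-1)+qN_0} with Theorem~\ref{thm:even-polygonal-realization} gives
$$
qN_0(\mathcal G^{(m+1)})=P_{m+1}(q)-q(q-1).
$$

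\emph{Points with $x\neq0$.} Corollary~\ref{cor:x-nonzero-fiber} says that projecting to $(x,y_0)$ is a bijection onto $\mathbb F_q^\times\times\mathbb F_q$. The condition $\Theta(x,y_0)=0$ involves only $x$ and $y_0$, so the surviving points correspond bijectively to pairs $(x,y_0)\in\mathbb F_q^\times\times\mathbb F_q$ with $\Theta(x,y_0)=0$. By Lemma~\ref{lem:theta-half-count} there are exactly $q(q-1)/2$ such pairs.

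Adding the two parts gives
$$
\#S^\Theta_{\mathcal G^{(m+1)}}(\mathbb F_q)=P_{m+1}(q)-q(q-1)+\tfrac{q(q-1)}{2}=P_{m+1}(q)-\tfrac{q(q-1)}{2}.
$$
The difference formula $P_{m'}(q)-P_m(q)=\tfrac{m'-m}{2}q(q-1)$ with $m'=m+1$ shows this equals $P_m(q)$. Expanding $P_m(q)=q((m-2)q-m+4)/2$ gives the stated closed form, and letting $q$ range over $\mathcal Q$ gives the statement about prime-power subsequences.

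The main obstacle is only bookkeeping: we must check that the bijection of Corollary~\ref{cor:x-nonzero-fiber} is compatible with the $\Theta$ condition. This holds because $\Theta$ depends only on $(x,y_0)$. The genuinely nontrivial counting, which handles $p$ odd and $p=2$ separately, is already contained in Lemma~\ref{lem:theta-half-count}.
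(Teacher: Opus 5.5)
Your proposal is correct and follows essentially the same route as the paper. The paper likewise takes $\mathcal G^{(m+1)}$ from the even case, keeps every $x=0$ point because $\Theta(0,y_0)=0$, and uses the bijection onto $\mathbb F_q^\times\times\mathbb F_q$ together with the half-count lemma for $\Theta$ to get $P_{m+1}(q)-q(q-1)/2=P_m(q)$. The only difference is cosmetic: you add the two parts directly, whereas the paper subtracts the removed $q(q-1)/2$ points.
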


\begin{proof}
Let $m\geq3$ be odd, and let $q$ be a prime power.
Since $m+1$ is even,
Theorem~\ref{thm:even-polygonal-realization} gives
a defining family $\mathcal G^{(m+1)}$ such that
$$
\#S_{\mathcal G^{(m+1)}}(\mathbb F_q)
=
P_{m+1}(q)
=
\frac{(m-1)q^2-(m-3)q}{2}.
$$

By Lemma~\ref{lem:theta-half-count},
$\Theta(0,y_0)=0$
for every $y_0\in\mathbb F_q$. Hence all points with $x=0$
remain in $S_{\mathcal G^{(m+1)}}^\Theta(\mathbb F_q)$.

By Corollary~\ref{cor:x-nonzero-fiber},
the points of $S_{\mathcal G^{(m+1)}}(\mathbb F_q)$
with $x\neq0$ are in bijection with
$\mathbb F_q^\times\times\mathbb F_q$.
By Lemma~\ref{lem:theta-half-count}, exactly
$q(q-1)/2$ of these points satisfy $\Theta(x,y_0)=0$.
Consequently,
$$
\#S_{\mathcal G^{(m+1)}}(\mathbb F_q)
-
\#S_{\mathcal G^{(m+1)}}^\Theta(\mathbb F_q)
=
\frac{q(q-1)}{2}.
$$
Therefore
$$
\begin{aligned}
\#S_{\mathcal G^{(m+1)}}^\Theta(\mathbb F_q)
&=
P_{m+1}(q)-\frac{q(q-1)}{2}\\
&=
\frac{(m-2)q^2-(m-4)q}{2}\\
&=
P_m(q).
\end{aligned}
$$
The equality of the prime-power subsequences follows.
\end{proof}

\begin{ex}
For $m=357$, Example~\ref{ex:recursive-358} gives the defining family
$\mathcal G^{(358)}$. By Definition~\ref{def:odd-family},
$$
\mathcal G^{(357)}
=
\left(
G_1^{(358)},\ldots,G_8^{(358)},\Theta
\right).
$$
Hence Theorem~\ref{thm:odd-polygonal-realization} gives
$$
\#S_{\mathcal G^{(357)}}(\mathbb F_q)
=
P_{357}(q).
$$
\end{ex}

\section{Polynomial families realizing centered polygonal numbers}
\label{sec:centered-polygonal-surfaces}

We now consider the prime-power subsequences of the centered polygonal
numbers introduced in Definition~\ref{def:centered-polygonal-number}. For $m\geq3$,
$$
C_m(q)
=
1+\frac{mq(q-1)}{2}.
$$

By Theorem~\ref{thm q(q-1)+qN_0}, the point count of every
defining family $\mathcal G$ satisfying
\eqref{eq:defining-polynomials} is divisible by $q$.
However, for every odd prime power $q$,
$$
C_m(q)\equiv1\pmod q.
$$
Hence $C_m(q)$ cannot be realized directly as the
$\mathbb F_q$-rational point count of a defining family satisfying
\eqref{eq:defining-polynomials} for every prime power $q$.

We therefore first remove the central point and define
$$
C_m^\ast(n)
=
C_m(n)-1
=
\frac{mn(n-1)}{2}.
$$
Thus
$$
C_m^\ast(q)
=
\frac{mq(q-1)}{2}.
$$
For a fixed integer $m\geq3$, set
$$
\mathcal C_m^{\ast,\mathcal Q}
=
\left(
C_m^\ast(q)
\right)_{q\in\mathcal Q}.
$$

Since $C_m^\ast(2)=m$, the argument of
Corollary~\ref{cor:polygonal-bound} shows that
$C_m^\ast(q)$ can be realized as the point count of a
defining family for every prime power $q$ only if $m$
is even. We therefore first consider even $m$.

\subsection{Even centered polygonal numbers without the central point}
\label{subsec:even-centered-polygonal-surfaces}

Let $m\geq4$ be even. Since
$$
C_m^\ast(q)=\frac{m}{2}q(q-1),
$$
Theorem~\ref{thm q(q-1)+qN_0} gives the following criterion.

\begin{lemma}\label{lem:centered-even-N0}
Let $m\geq4$ be even, and let $\mathcal G$ be a defining
family satisfying \eqref{eq:defining-polynomials}. Then
$$
\#S_{\mathcal G}(\mathbb F_q)=C_m^\ast(q)
$$
if and only if
$$
N_0(\mathcal G)=\frac{m-2}{2}(q-1).
$$
\end{lemma}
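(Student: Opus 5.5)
The plan is a direct algebraic comparison using Theorem~\ref{thm q(q-1)+qN_0}. That theorem gives, for any defining family $\mathcal G$ satisfying \eqref{eq:defining-polynomials},
$$
\#S_{\mathcal G}(\mathbb F_q)=q(q-1)+qN_0(\mathcal G).
$$
Since $m$ is even, the target value can be written as
$$
C_m^\ast(q)=\frac{m}{2}q(q-1).
$$
The statement therefore reduces to deciding when
$$
q(q-1)+qN_0(\mathcal G)=\frac{m}{2}q(q-1).
$$

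\emph{Forward direction.} Assume $\#S_{\mathcal G}(\mathbb F_q)=C_m^\ast(q)$. Subtract $q(q-1)$ from both sides to get
$$
qN_0(\mathcal G)=\left(\frac{m}{2}-1\right)q(q-1)=\frac{m-2}{2}\,q(q-1).
$$
Since $q\geq2$ is nonzero in $\mathbb Z$, we may divide by $q$, giving $N_0(\mathcal G)=\frac{m-2}{2}(q-1)$.

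\emph{Converse.} Assume $N_0(\mathcal G)=\frac{m-2}{2}(q-1)$ and substitute into the formula from Theorem~\ref{thm q(q-1)+qN_0}:
$$
q(q-1)+q\cdot\frac{m-2}{2}(q-1)=\frac{m}{2}q(q-1)=C_m^\ast(q).
$$

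There is no real obstacle here; the only point needing care is that the equivalence holds for each fixed prime power $q$ separately, and the cancellation of $q$ is legitimate because it is a positive integer. Evenness of $m$ is used only to ensure $\frac{m-2}{2}$ is an integer, consistent with $N_0$ being a count.
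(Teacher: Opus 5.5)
Your proof is correct and is the paper's argument: the paper just says the lemma follows directly from Theorem~\ref{thm q(q-1)+qN_0}. You have written out the substitution $q(q-1)+qN_0(\mathcal G)=\frac{m}{2}q(q-1)$ and the cancellation of $q$ that this entails.
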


\begin{proof}
The result follows directly from
Theorem~\ref{thm q(q-1)+qN_0}.
\end{proof}

We next determine the restriction on $m$ imposed by the
point-count bound.

\begin{prop}\label{prop:centered-q2-suffices}
Let $n\geq2$ and let $m\geq4$ be even. Then
$$
C_m^\ast(q)\leq q(q-1)+q^{n+1}
$$
for every prime power $q$ if and only if
$$
4\leq m\leq2^{n+1}+2.
$$
\end{prop}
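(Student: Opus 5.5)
The plan is to mirror the proof of Proposition~\ref{prop:q2-suffices}, using the identity
$$
q(q-1)+q^{n+1}-C_m^\ast(q)
=
q(q-1)\left(1+q+\cdots+q^{n-1}-\frac{m-2}{2}\right),
$$
which comes from $q^{n+1}=q+q(q-1)(1+q+\cdots+q^{n-1})$ and $C_m^\ast(q)=\frac{m}{2}q(q-1)$. Since $q(q-1)>0$, the desired inequality holds for a given $q$ exactly when
$$
\frac{m-2}{2}\leq 1+q+\cdots+q^{n-1}.
$$

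For the forward implication, specialize to $q=2$. There the condition reads $\frac{m-2}{2}\leq 2^n-1$, that is, $m\leq 2^{n+1}$. Equivalently, $C_m^\ast(2)=m\leq 2+2^{n+1}$, which is the stated upper bound. The lower bound $4\leq m$ is a standing hypothesis.

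For the converse, assume $4\leq m\leq 2^{n+1}+2$ and take any prime power $q\geq 2$. Then $1+q+\cdots+q^{n-1}\geq 2^n-1$, so it suffices to show $\frac{m-2}{2}\leq 2^n-1$, i.e.\ $m\leq 2^{n+1}$.

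This is where the main obstacle lies. The given bound only yields $\frac{m-2}{2}\leq 2^n$, so for $q=2$ the case $m=2^{n+1}+2$ fails: $C^\ast_{2^{n+1}+2}(2)=2^{n+1}+2$ exceeds $2+2^{n+1}$ by nothing in the displayed form but violates the factored form. I would therefore recheck the $q=2$ arithmetic carefully: $q(q-1)+q^{n+1}=2+2^{n+1}$ and $C_m^\ast(2)=m$, so the condition is $m\leq 2^{n+1}+2$ directly.

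This means the factored identity above has an error, and I would recompute it. We have $q^{n+1}=q+(q-1)(q+q^2+\cdots+q^n)$, so
$$
q(q-1)+q^{n+1}-\tfrac m2 q(q-1)=q(q-1)\Bigl(1+q+\cdots+q^{n-1}+\tfrac1{q-1}-\tfrac m2\Bigr)+\dots
$$
Rather than force a clean factorization, the plan is to argue monotonically. The function $q\mapsto\bigl(q(q-1)+q^{n+1}\bigr)/\bigl(q(q-1)\bigr)=1+\frac{q^n}{q-1}$ is nondecreasing for $q\geq2$ when $n\geq1$, since $\frac{q^n}{q-1}$ increases for $q\geq 2$. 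Hence the inequality $\frac m2\leq 1+\frac{q^n}{q-1}$ is hardest at $q=2$, where it reads $m\leq 2+2^{n+1}$. This gives both directions at once.
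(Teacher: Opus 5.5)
Your reduction is right: dividing by $q(q-1)>0$, the inequality for a given $q$ is equivalent to $\frac m2\le 1+\frac{q^n}{q-1}$. The case $q=2$ gives the forward direction. Showing that $q=2$ is the worst case is the same route as the paper's one-line proof, which invokes $2^n(q-1)\le q^n$ for $n\ge2$, i.e.\ $\frac{q^n}{q-1}\ge\frac{2^n}{1}$.

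The gap is in how you justify that worst case. You assert that $q\mapsto\frac{q^n}{q-1}$ is nondecreasing on $q\ge2$ ``when $n\ge1$'', and the only justification is a restatement of the claim. It is false for $n=1$, since $\frac{q}{q-1}$ strictly decreases. The proposition itself fails for $n=1$: take $m=6\le 2^2+2$ and $q=3$, where $C_6^\ast(3)=18>15=q(q-1)+q^2$. So any valid proof must use the hypothesis $n\ge2$, and yours never does. As written, the one step that carries the converse is unsupported.

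The repair is short. For $n\ge2$ and $q\ge2$ you have $q^n=q^{n-2}\cdot q^2\ge 2^{n-2}\cdot 4(q-1)=2^n(q-1)$, using $q^2-4(q-1)=(q-2)^2\ge0$. Equivalently, the derivative of $q^n/(q-1)$ has numerator $q^{n-1}\bigl((n-1)q-n\bigr)$, and $(n-1)q-n\ge n-2\ge0$ for $q\ge2$. Then $m\le 2^{n+1}+2$ gives $\frac m2\le 2^n+1\le 1+\frac{q^n}{q-1}$, as needed.

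Separately, your first factored identity is off by $q$. The correct form is $q(q-1)+q^{n+1}-C_m^\ast(q)=q+q(q-1)\bigl(1+q+\cdots+q^{n-1}-\frac{m-2}{2}\bigr)$. The ``main obstacle'' paragraph and the incomplete display ending in ``$+\dots$'' rest on that slip and should be deleted from a final write-up.
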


\begin{proof}
The proof is similar to that of Proposition~\ref{prop:q2-suffices},
using $2^n(q-1)\leq q^n$ for $n\geq2$.
\end{proof}

Proposition~\ref{prop:centered-q2-suffices} gives the same bound as
Proposition~\ref{prop:q2-suffices}. Hence we use the sets $I_n$ from
Definition~\ref{def:In}, the parent map $\Par$ from
Definition~\ref{def:parent}, and the operators $L$ and $E$ from
Definitions~\ref{def:L-operation} and~\ref{def:extension}. In
particular,
$
I_2=\{4,6,8,10\}.
$

\medskip
Let $m\in I_n$ with $n\geq3$. Since
$
m-\Par(m)=2^n,
$
Lemma~\ref{lem:centered-even-N0} shows that the required increase in
$N_0$ from $\Par(m)$ to $m$ is
$$
\frac{m-\Par(m)}{2}(q-1)
=
2^{n-1}(q-1).
$$
By Lemma~\ref{lem:LE-N0} and
Definition~\ref{def:extension-number}, the recursive step
$E^{\nu(m)}\circ L$ produces exactly this increase in $N_0$,
as in the even-gonal construction.

Thus, only the initial associated families corresponding to
$
m \in I_2=\{4,6,8,10\}
$
need to be changed.

By Lemma~\ref{lem:centered-even-N0}, the required numbers
of common zeros of the initial associated families
for $m=4,6,8,10$ are
$$
q-1,\qquad 2(q-1),\qquad
3(q-1),\qquad 4(q-1),
$$
respectively.

Define
$$
\rho_1(Y_0,Y_1)=Y_0Y_1-1,
\qquad
\rho_2(Y_0,Y_1)=Y_0-1,
$$
$$
\rho_3(Y_0,Y_1)=Y_1,
\qquad
\rho_4(Y_0,Y_1)=Y_1-\Delta(Y_0-1).
$$
For $m\in I_2$, set
$$
\Psi_m(Y_0,Y_1)
=
\prod_{j=1}^{(m-2)/2}\rho_j(Y_0,Y_1),
$$
and define
\begin{equation}
\mathcal F_C^{(m)}
=
\bigl(Q(Y_0),\Psi_m(Y_0,Y_1)\bigr).
\label{eq:initial-centered-Fm}
\end{equation}

\begin{prop}\label{prop:initial-centered-systems}
Let $m\in\{4,6,8,10\}$, and let $\mathcal F_C^{(m)}$ be defined by
\eqref{eq:initial-centered-Fm}. If $\mathcal G_C^{(m)}$ is a defining
family whose associated family is $\mathcal F_C^{(m)}$, then
$$
\#S_{\mathcal G_C^{(m)}}(\mathbb F_q)
=
C_m^\ast(q)
=
\frac{mq(q-1)}{2}.
$$
\end{prop}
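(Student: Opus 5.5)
The plan is to reduce the statement to a count of affine points, exactly as in the proof of Proposition~\ref{prop:initial-even-systems}. By Lemma~\ref{lem:centered-even-N0}, it suffices to show that
$$
N_0(\mathcal G_C^{(m)})=\frac{m-2}{2}(q-1).
$$
As in the proof of Proposition~\ref{prop:initial-even-systems}, the first polynomial $Q(Y_0)$ vanishes at every $y_0\in\mathbb F_q$. So $N_0(\mathcal G_C^{(m)})$ is the number of zeros of $\Psi_m$ in $\mathbb F_q^2$. Since $\Psi_m$ is a product, its zero set is the union $Z_1\cup\cdots\cup Z_{(m-2)/2}$, where $Z_j$ denotes the zero set of $\rho_j$. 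I will therefore show that each successive $Z_j$ adds exactly $q-1$ new points.

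The first step is to describe each $Z_j$ explicitly.
\begin{enumerate}
\item $Z_1=\{(y_0,y_0^{-1}):y_0\in\mathbb F_q^\times\}$, which has $q-1$ points.
\item $Z_2=\{(1,y_1)\}$, the line $Y_0=1$, which has $q$ points.
\item $Z_3=\{(y_0,0)\}$, the line $Y_1=0$, which has $q$ points.
\item For $Z_4$, I use $\Delta(0)=0$ and $\Delta(a)=1$ for $a\neq0$. This gives $Z_4=\{(1,0)\}\cup\{(y_0,1):y_0\neq1\}$, which has $q$ points.
\end{enumerate}

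The second step computes the union for each $m$ in turn.
\begin{enumerate}
\item For $m=4$, the count is $q-1$.
\item For $m=6$, we have $Z_1\cap Z_2=\{(1,1)\}$. Hence the union has $(q-1)+q-1=2(q-1)$ points.
\item For $m=8$, $Z_3$ misses $Z_1$, because $y_0\cdot0\neq1$. It meets $Z_2$ only at $(1,0)$. So $Z_3$ adds $q-1$ new points, giving $3(q-1)$.
\item For $m=10$, the point $(1,0)$ of $Z_4$ already lies in $Z_2\cap Z_3$. The remaining points $(y_0,1)$ with $y_0\neq1$ lie in no earlier set: they are not in $Z_1$ because $y_0\neq1$, not in $Z_2$ because $y_0\neq1$, and not in $Z_3$ because $1\neq0$. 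So $Z_4$ adds $q-1$ new points, giving $4(q-1)$.
\end{enumerate}

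In all four cases $N_0=\frac{m-2}{2}(q-1)$. Theorem~\ref{thm q(q-1)+qN_0} then gives $q(q-1)+q\cdot\frac{m-2}{2}(q-1)=\frac{mq(q-1)}{2}=C_m^\ast(q)$, which is the claimed count.

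The main obstacle is the inclusion–exclusion bookkeeping for $Z_4$. It rests on evaluating $\Delta(Y_0-1)$ correctly at $y_0=1$ versus $y_0\neq1$, and on confirming that the only overlaps are at $(1,1)$ and $(1,0)$. The counts must also remain valid for $q=2$; they do, since $Z_1=\{(1,1)\}$ there and every overlap computation above is independent of $q$.
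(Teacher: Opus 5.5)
Your proposal is correct and follows essentially the same route as the paper. Both reduce to counting zeros of $\Psi_m$ in $\mathbb F_q^2$, then treat $m=4,6,8,10$ in turn, showing each new factor $\rho_j$ contributes exactly $q-1$ new zeros through the overlaps at $(1,1)$ and $(1,0)$. Both then conclude via Lemma~\ref{lem:centered-even-N0} and Theorem~\ref{thm q(q-1)+qN_0}.
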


\begin{proof}
Since $Q(y_0)=0$ for every $y_0\in\mathbb F_q$,
$
N_0(\mathcal G_C^{(m)})
$
is the number of zeros of
$
\Psi_m(Y_0,Y_1)
$
in $\mathbb F_q^2$.

We consider the four values of $m$.

\begin{enumerate}
\item For $m=4$,
$
\Psi_4(Y_0,Y_1)=Y_0Y_1-1.
$
Its zeros are
$
(u,u^{-1}),
~
u\in\mathbb F_q^\times.
$
Hence
$
N_0(\mathcal G_C^{(4)})=q-1.
$

\item For $m=6$,
$
\Psi_6(Y_0,Y_1)
=
(Y_0Y_1-1)(Y_0-1).
$
The zero sets of the two factors contain $q-1$ and $q$ points,
respectively, and intersect only at $(1,1)$. Therefore
$$
N_0(\mathcal G_C^{(6)})
=
(q-1)+q-1
=
2(q-1).
$$

\item For $m=8$,
$
\Psi_8(Y_0,Y_1)
=
(Y_0Y_1-1)(Y_0-1)Y_1.
$
The zero sets of the three factors contain $q-1$, $q$, and $q$ points, respectively.
The first and second intersect at $(1,1)$, the second and third at
$(1,0)$, and the first and third are disjoint. Hence
$$
N_0(\mathcal G_C^{(8)})
=
(q-1)+q+q-2
=
3(q-1).
$$

\item For $m=10$,
$
\Psi_{10}(Y_0,Y_1)
=
(Y_0Y_1-1)(Y_0-1)Y_1
\bigl(Y_1-\Delta(Y_0-1)\bigr).
$
By the preceding case, the first three factors have $3(q-1)$ zeros.
The last factor vanishes at
$
(1,0)$
and 
$(u,1),
~
u\in\mathbb F_q\setminus\{1\},
$
and hence has $q$ zeros. It is disjoint from the zero set of
$Y_0Y_1-1$ and meets the union of the zero sets of $Y_0-1$ and $Y_1$
only at $(1,0)$. Thus it contributes $q-1$ new zeros, and
$$
N_0(\mathcal G_C^{(10)})
=
3(q-1)+(q-1)
=
4(q-1).
$$
\end{enumerate}

Thus, for every $m\in\{4,6,8,10\}$,
$$
N_0(\mathcal G_C^{(m)})
=
\frac{m-2}{2}(q-1).
$$
The result follows from Lemma~\ref{lem:centered-even-N0}.
\end{proof}

We now use these four associated families as the initial cases and apply
the same recursive step as in
Definition~\ref{def:recursive-even-family}.

\begin{defi}
\label{def:recursive-centered-family}
For every even integer $m\geq12$, define
$$
\mathcal F_C^{(m)}
=
E^{\nu(m)}
\left(
L\bigl(\mathcal F_C^{(\Par(m))}\bigr)
\right).
$$
\end{defi}

By Lemma~\ref{lem:parent-termination}, repeated application of $\Par$
reaches an element of $I_2$. Hence
Definition~\ref{def:recursive-centered-family}, together with the
initial families in \eqref{eq:initial-centered-Fm}, defines
$\mathcal F_C^{(m)}$ for every even $m\geq4$.

The next lemma gives the number of polynomials in these families.

\begin{lemma}
\label{lem:centered-recursive-family-size}
Let $m\in I_n$. Then $\mathcal F_C^{(m)}$ consists of $n$ polynomials.
\end{lemma}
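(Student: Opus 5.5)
The plan is to copy the proof of Lemma~\ref{lem:recursive-family-size} almost verbatim, using induction on $n$. The count of polynomials depends only on the initial families and on the operators $L$ and $E$. It does not depend on which polynomials make up the family, so the centered case needs nothing new.

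\emph{Base case.} If $n=2$, then $m\in I_2=\{4,6,8,10\}$. Here $\mathcal F_C^{(m)}=\bigl(Q(Y_0),\Psi_m(Y_0,Y_1)\bigr)$ by \eqref{eq:initial-centered-Fm}, which consists of two polynomials.

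\emph{Inductive step.} Let $n\geq3$ and $m\in I_n$, and suppose $\Par(m)\in I_r$. By Corollary~\ref{cor:parent-bijection} we have $r<n$. The induction hypothesis, taken as strong induction over all smaller levels, then says that $\mathcal F_C^{(\Par(m))}$ consists of $r$ polynomials. Definition~\ref{def:L-operation} shows that $L$ turns a family of $s$ polynomials into a family of $s+1$ polynomials. Definition~\ref{def:extension} shows that $E^{\ell}$ adds $\ell$ polynomials; this is also recorded in Lemma~\ref{lem:LE-N0}. By Definition~\ref{def:recursive-centered-family}, $\mathcal F_C^{(m)}=E^{\nu(m)}\bigl(L(\mathcal F_C^{(\Par(m))})\bigr)$. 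Since $\nu(m)=n-r-1$ by Definition~\ref{def:extension-number}, this family has
$$
r+1+\nu(m)=r+1+(n-r-1)=n
$$
polynomials.

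There is no real obstacle. The one point to state explicitly is that the induction must be strong: $\Par(m)$ can lie in any $I_r$ with $2\leq r<n$, not only in $I_{n-1}$. Lemma~\ref{lem:parent-termination} guarantees that the recursion is well founded, so the induction is valid.
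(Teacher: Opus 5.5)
Your proof is correct and is essentially the paper's argument. The paper's proof simply notes that the proof of Lemma~\ref{lem:recursive-family-size} carries over, because each initial centered family has two polynomials and the same recursive step $E^{\nu(m)}\circ L$ is used; this is exactly the induction you wrote out, and your remark that the induction must be strong (since $\Par(m)$ can lie in any $I_r$ with $r<n$) makes explicit what the paper leaves implicit.
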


\begin{proof}
The proof is the same as that of
Lemma~\ref{lem:recursive-family-size}, since each initial family
$\mathcal F_C^{(a)}$, $a\in I_2$, consists of two polynomials and
Definition~\ref{def:recursive-centered-family} uses the same recursive
operators.
\end{proof}

We now prove the point-count formula for all even values of $m$.

\begin{theorem}
\label{thm:centered-even-realization}
For every even integer $m\geq4$ and every prime power $q$, there exists
a defining family $\mathcal G_C^{(m)}$ such that
$$
\#S_{\mathcal G_C^{(m)}}(\mathbb F_q)
=
C_m^\ast(q)
=
\frac{mq(q-1)}{2}.
$$
Consequently,
$$
\left(
\#S_{\mathcal G_C^{(m)}}(\mathbb F_q)
\right)_{q\in\mathcal Q}
=
\left(
C_m^\ast(q)
\right)_{q\in\mathcal Q}
=
\mathcal C_m^{\ast,\mathcal Q},
$$
the prime-power subsequence of the centered $m$-gonal numbers without
the central point.
\end{theorem}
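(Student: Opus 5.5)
The plan is to mirror the proof of Theorem~\ref{thm:even-polygonal-realization}, swapping the initial families and the target counts. First, by \eqref{eq:Gi-FR}, every associated family $\mathcal F_C^{(m)}$ comes from at least one defining family $\mathcal G_C^{(m)}$ (for instance take $d_i=1$ and $R_i=0$). Lemma~\ref{lem:centered-even-N0} turns the required point count into a condition on zeros: it suffices to show
$$
N_0(\mathcal G_C^{(m)})=\frac{m-2}{2}(q-1)
$$
for every even $m\geq4$.

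If $m\in I_2=\{4,6,8,10\}$, this is exactly Proposition~\ref{prop:initial-centered-systems}. Otherwise $m\in I_n$ with $n\geq3$. Take the parent chain $m=m_t\mapsto\cdots\mapsto m_0$ from Definition~\ref{def:parent-chain}, with $m_0\in I_2$, and prove the $N_0$ formula for $m_i$ by induction on $i$.

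For the inductive step, write $m_i\in I_s$ and $m_{i-1}=\Par(m_i)\in I_r$. Lemma~\ref{lem:centered-recursive-family-size} says $\mathcal F_C^{(m_{i-1})}$ has $r$ polynomials. Then Lemma~\ref{lem:LE-N0}, applied with $\ell=\nu(m_i)=s-r-1$, gives
$$
N_0(\mathcal G_C^{(m_i)})=N_0(\mathcal G_C^{(m_{i-1})})+2^{s-1}(q-1).
$$
Note that Lemma~\ref{lem:LE-N0} holds for any associated family, so it applies to the centered initial families without change. Since $m_i-m_{i-1}=2^s$, the inductive hypothesis gives
$$
N_0(\mathcal G_C^{(m_i)})=\frac{m_{i-1}-2}{2}(q-1)+\frac{2^s}{2}(q-1)=\frac{m_i-2}{2}(q-1).
$$
Taking $i=t$ and applying Lemma~\ref{lem:centered-even-N0} gives $\#S_{\mathcal G_C^{(m)}}(\mathbb F_q)=C_m^\ast(q)$ for each prime power $q$, and the statement about prime-power subsequences follows.

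No step is really an obstacle. The one thing to check is that Lemma~\ref{lem:LE-N0} uses only the number of polynomials in $\mathcal F$ and the value $N_0(\mathcal G)$, and nothing specific to the polygonal initial families. It does not, since its proof only splits the count by whether $y_0=0$ or $y_0\neq0$.
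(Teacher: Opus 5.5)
Your proposal is correct and is essentially the paper's argument. You reduce to $N_0(\mathcal G_C^{(m)})=\frac{m-2}{2}(q-1)$ via Lemma~\ref{lem:centered-even-N0}, take the base cases from Proposition~\ref{prop:initial-centered-systems}, and propagate along the parent chain using Lemma~\ref{lem:LE-N0} with $\ell=\nu(m_i)$. The paper only says the argument follows that of Theorem~\ref{thm:even-polygonal-realization}, whereas you write the induction out directly on $N_0$ and rightly note that Lemma~\ref{lem:LE-N0} depends only on the number of polynomials and on $N_0$, not on the particular initial families.
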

\begin{proof}
By \eqref{eq:Gi-FR}, for every $\mathcal F_C^{(m)}$ there exists a
defining family $\mathcal G_C^{(m)}$ having $\mathcal F_C^{(m)}$ as
its associated family.

The remaining argument follows that of
Theorem~\ref{thm:even-polygonal-realization}. The initial cases are
given by Proposition~\ref{prop:initial-centered-systems}, and
Definition~\ref{def:recursive-centered-family} uses the same recursive
step. Hence
$$
N_0(\mathcal G_C^{(m)})
=
\frac{m-2}{2}(q-1).
$$
Therefore, by Lemma~\ref{lem:centered-even-N0},
$$
\#S_{\mathcal G_C^{(m)}}(\mathbb F_q)
=
C_m^\ast(q)
=
\frac{mq(q-1)}{2}.
$$
The equality of the prime-power subsequences follows.
\end{proof}

\subsection{Odd centered polygonal numbers without the central point}
\label{subsec:odd-centered-polygonal-surfaces}

Let $m\geq3$ be odd. Since $m+1$ is even,
Theorem~\ref{thm:centered-even-realization} gives a defining family
$\mathcal G_C^{(m+1)}$ such that
$$
\#S_{\mathcal G_C^{(m+1)}}(\mathbb F_q)
=
C_{m+1}^\ast(q).
$$
Moreover,
$$
C_{m+1}^\ast(q)-C_m^\ast(q)
=
\frac{q(q-1)}{2}.
$$
Thus, as in the odd polygonal-number construction, it is enough to
retain all points with $x=0$ and exactly half of the points with
$x\neq0$. We use the polynomial $\Theta$ and the notation
$S_{\mathcal G}^{\Theta}(\mathbb F_q)$ introduced in
Subsection~\ref{subsec:odd-auxiliary-polynomial}.

\begin{defi}
\label{def:centered-odd-family}
Let $m\geq3$ be odd, and let
$$
\mathcal G_C^{(m+1)}
=
\left(
G_{C,1}^{(m+1)},\ldots,
G_{C,n(m+1)}^{(m+1)}
\right)
$$
be a defining family as in
Theorem~\ref{thm:centered-even-realization}. Define
$$
\mathcal G_C^{(m)}
=
\left(
G_{C,1}^{(m+1)},\ldots,
G_{C,n(m+1)}^{(m+1)},\Theta
\right),
$$
and set
$$
S_{\mathcal G_C^{(m)}}(\mathbb F_q)
=
S_{\mathcal G_C^{(m+1)}}^{\Theta}(\mathbb F_q).
$$
\end{defi}

\begin{theorem}
\label{thm:centered-odd-realization}
For every odd integer $m\geq3$ and every prime power $q$, every
polynomial family $\mathcal G_C^{(m)}$ obtained as in
Definition~\ref{def:centered-odd-family} satisfies
$$
\#S_{\mathcal G_C^{(m)}}(\mathbb F_q)
=
C_m^\ast(q)
=
\frac{mq(q-1)}{2}.
$$
Consequently,
$$
\left(
\#S_{\mathcal G_C^{(m)}}(\mathbb F_q)
\right)_{q\in\mathcal Q}
=
\left(
C_m^\ast(q)
\right)_{q\in\mathcal Q}
=
\mathcal C_m^{\ast,\mathcal Q}.
$$
\end{theorem}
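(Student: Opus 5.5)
The proof will mirror that of Theorem~\ref{thm:odd-polygonal-realization}, with Theorem~\ref{thm:centered-even-realization} playing the role of Theorem~\ref{thm:even-polygonal-realization}. Fix an odd $m\geq3$ and a prime power $q$. Since $m+1\geq4$ is even, Theorem~\ref{thm:centered-even-realization} provides $\mathcal G_C^{(m+1)}$, and any such family, with associated family $\mathcal F_C^{(m+1)}$, satisfies
$$
\#S_{\mathcal G_C^{(m+1)}}(\mathbb F_q)=C_{m+1}^\ast(q)=\frac{(m+1)q(q-1)}{2}.
$$
By Definition~\ref{def:centered-odd-family}, $S_{\mathcal G_C^{(m)}}(\mathbb F_q)=S^{\Theta}_{\mathcal G_C^{(m+1)}}(\mathbb F_q)$. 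It therefore suffices to show that imposing $\Theta(x,y_0)=0$ removes exactly $q(q-1)/2$ points.

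\textbf{Points with $x=0$.} Split $S_{\mathcal G_C^{(m+1)}}(\mathbb F_q)$ according to whether $x=0$ or $x\neq0$. Lemma~\ref{lem:theta-half-count} gives $\Theta(0,y_0)=0$ for all $y_0$. Hence every point with $x=0$ survives, and there are $qN_0(\mathcal G_C^{(m+1)})$ of them.

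\textbf{Points with $x\neq0$.} Corollary~\ref{cor:x-nonzero-fiber} says the projection $(x,y_0,\ldots,y_n)\mapsto(x,y_0)$ is a bijection onto $\mathbb F_q^\times\times\mathbb F_q$. Since $\Theta$ depends only on $(x,y_0)$, the surviving points correspond bijectively to the pairs $(x,y_0)\in\mathbb F_q^\times\times\mathbb F_q$ with $\Theta(x,y_0)=0$. By Lemma~\ref{lem:theta-half-count}, there are exactly $q(q-1)/2$ such pairs.

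Combining the two cases gives
$$
\#S_{\mathcal G_C^{(m)}}(\mathbb F_q)=C_{m+1}^\ast(q)-\frac{q(q-1)}{2}=\frac{mq(q-1)}{2}=C_m^\ast(q).
$$
The statement about prime-power subsequences then follows by letting $q$ range over $\mathcal Q$.

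There is no real obstacle here. The only point needing care is that the statement quantifies over \emph{every} family obtained via Definition~\ref{def:centered-odd-family}, and there are many choices of $R_i$ and $d_i$. This causes no trouble: the count for $x=0$ depends only on the associated family $\mathcal F_C^{(m+1)}$, by Theorem~\ref{thm q(q-1)+qN_0}, and the bijection for $x\neq0$ holds for any defining family. So the argument applies uniformly to all such choices.
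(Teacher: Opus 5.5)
Your proposal is correct and follows essentially the same route as the paper. All points with $x=0$ survive because $\Theta(0,y_0)=0$, and Corollary~\ref{cor:x-nonzero-fiber} together with Lemma~\ref{lem:theta-half-count} shows that exactly $q(q-1)/2$ of the points with $x\neq0$ survive, giving $C_{m+1}^\ast(q)-q(q-1)/2=C_m^\ast(q)$; your remark that this is uniform over all admissible choices of $d_i$ and $R_i$ makes explicit what the paper leaves implicit.
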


\begin{proof}
The proof is the same as that of
Theorem~\ref{thm:odd-polygonal-realization}. By
Lemma~\ref{lem:theta-half-count}, all points with $x=0$ remain, while
Corollary~\ref{cor:x-nonzero-fiber} and
Lemma~\ref{lem:theta-half-count} show that exactly
$q(q-1)/2$ of the points with $x\neq0$ remain. Hence
$$
\begin{aligned}
\#S_{\mathcal G_C^{(m)}}(\mathbb F_q)
&=
C_{m+1}^\ast(q)-\frac{q(q-1)}{2}\\
&=
C_m^\ast(q).
\end{aligned}
$$
\end{proof}

\subsection{Recovering the central point}
\label{subsec:recovering-central-point}

Let $\mathcal G_C^{(m)}$ be the polynomial family obtained from
Theorem~\ref{thm:centered-even-realization} when $m$ is even and from
Definition~\ref{def:centered-odd-family} when $m$ is odd. Since
$$
C_m(q)=C_m^\ast(q)+1,
$$
we add one point to $S_{\mathcal G_C^{(m)}}(\mathbb F_q)$.

Let $n$ be determined by
$$
\begin{cases}
m\in I_n, & \text{if $m$ is even},\\
m+1\in I_n, & \text{if $m$ is odd}.
\end{cases}
$$For $n\geq3$,
Proposition~\ref{prop:level-of-m} gives
$$
n=
\begin{cases}
\left\lfloor\log_2(m-4)\right\rfloor,
& \text{if $m$ is even},\\
\left\lfloor\log_2(m-3)\right\rfloor,
& \text{if $m$ is odd}.
\end{cases}
$$

The variables of $\mathcal G_C^{(m)}$ are
$X,Y_0,\ldots,Y_n$. 

\smallskip
Introduce a new variable $Z$. We associate to
$\mathcal G_C^{(m)}$ the set
\begin{equation}
\widetilde S_{\mathcal G_C^{(m)}}(\mathbb F_q)
=
\left\{
(z,x,y_0,\ldots,y_n)\in\mathbb F_q^{n+3}:
\begin{array}{l}
z(z-1)=0,\\
(1-z)R(x,y_0,\ldots,y_n)=0
\text{ for all }R\in\mathcal G_C^{(m)},\\
zx=zy_0=\cdots=zy_n=0
\end{array}
\right\}.
\label{eq:centered-complete-set}
\end{equation}

\begin{theorem}
\label{thm:centered-polygonal-realization}
For every integer $m\geq3$ and every prime power $q$, let
$\widetilde S_{\mathcal G_C^{(m)}}(\mathbb F_q)$ be the set defined in
\eqref{eq:centered-complete-set}. Then
$$
\#\widetilde S_{\mathcal G_C^{(m)}}(\mathbb F_q)
=
C_m(q)
=
1+\frac{mq(q-1)}{2}.
$$
Consequently,
$$
\left(
\#\widetilde S_{\mathcal G_C^{(m)}}(\mathbb F_q)
\right)_{q\in\mathcal Q}
=
\left(
C_m(q)
\right)_{q\in\mathcal Q},
$$
the prime-power subsequence of the centered $m$-gonal numbers.
\end{theorem}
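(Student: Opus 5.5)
The plan is to split $\widetilde S_{\mathcal G_C^{(m)}}(\mathbb F_q)$ according to the value of $z$. The equation $z(z-1)=0$ forces $z\in\{0,1\}$, and these two cases give disjoint subsets. We then count each piece and add.

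First take $z=1$. The conditions $zx=zy_0=\cdots=zy_n=0$ force $x=y_0=\cdots=y_n=0$. The conditions $(1-z)R=0$ are then automatically satisfied because $1-z=0$. Hence this case contributes exactly one point, $(1,0,\ldots,0)$, which plays the role of the central point.

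Next take $z=0$. The conditions $zx=zy_i=0$ are vacuous. The conditions $(1-z)R=0$ reduce to $R(x,y_0,\ldots,y_n)=0$ for every $R\in\mathcal G_C^{(m)}$. So the tuples $(0,x,y_0,\ldots,y_n)$ correspond bijectively to the common zeros of $\mathcal G_C^{(m)}$, that is, to $S_{\mathcal G_C^{(m)}}(\mathbb F_q)$.

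For even $m$ this set has $C_m^\ast(q)$ elements by Theorem~\ref{thm:centered-even-realization}. For odd $m$ the family includes $\Theta$, so its common zero set is $S^{\Theta}_{\mathcal G_C^{(m+1)}}(\mathbb F_q)$, which has $C_m^\ast(q)$ elements by Definition~\ref{def:centered-odd-family} and Theorem~\ref{thm:centered-odd-realization}.

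Adding the two cases gives $1+C_m^\ast(q)=C_m(q)=1+mq(q-1)/2$. The statement about prime-power subsequences follows by letting $q$ range over $\mathcal Q$.

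The only delicate point is bookkeeping in the odd case. The polynomial $\Theta(X,Y_0)$ must be treated as one of the $R\in\mathcal G_C^{(m)}$, even though it is not of the form \eqref{eq:defining-polynomials}. We also need the common zero set of the enlarged family, taken in the variables $x,y_0,\ldots,y_n$, to coincide exactly with the set identified with $S_{\mathcal G_C^{(m)}}(\mathbb F_q)$ in Definition~\ref{def:centered-odd-family}. Both hold directly from the definitions, since $\Theta$ involves only $x$ and $y_0$, so no further counting argument is needed.
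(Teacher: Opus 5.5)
Your proposal is correct and follows the paper's proof essentially step for step. You split by $z\in\{0,1\}$, identify the $z=0$ slice with $S_{\mathcal G_C^{(m)}}(\mathbb F_q)$ (counted by Theorem~\ref{thm:centered-even-realization} or Theorem~\ref{thm:centered-odd-realization}), and note that $z=1$ contributes only $(1,0,\ldots,0)$; your checks that $(1-z)R=0$ holds automatically at $z=1$ and that $\Theta$ is a member of the odd family make explicit what the paper leaves implicit.
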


\begin{proof}
By \eqref{eq:centered-complete-set}, every point satisfies
$$
z(z-1)=0.
$$
Hence $z=0$ or $z=1$.

\begin{enumerate}
\item[(i)] If $z=0$, then
$$
(1-z)R(x,y_0,\ldots,y_n)=0
$$
reduces to
$$
R(x,y_0,\ldots,y_n)=0
$$
for every $R\in\mathcal G_C^{(m)}$. Hence the points with $z=0$ form
$
\{0\}\times S_{\mathcal G_C^{(m)}}(\mathbb F_q).
$

\item[(ii)] If $z=1$, then
$$
zx=zy_0=\cdots=zy_n=0
$$
gives
$$
x=y_0=\cdots=y_n=0.
$$
Hence the only point with $z=1$ is
$
(1,0,\ldots,0).
$
\end{enumerate}

Therefore
$$
\#\widetilde S_{\mathcal G_C^{(m)}}(\mathbb F_q)
=
\#S_{\mathcal G_C^{(m)}}(\mathbb F_q)+1
=
C_m^\ast(q)+1
=
C_m(q),
$$
where the second equality follows from
Theorem~\ref{thm:centered-even-realization} if $m$ is even and from
Theorem~\ref{thm:centered-odd-realization} if $m$ is odd.
\end{proof}

\section{Symbolic codes for polygonal-number associated families}
\label{sec:symbolic-codes}

The associated families $\mathcal F^{(m)}$ are constructed
recursively from the initial families in
\eqref{eq:initial-Fm} by repeated applications of operators 
$L$ and $E$ (Definition~\ref{def:recursive-even-family}).
For large $m$, this involves writing several intermediate
polynomial families. We introduce symbolic codes to
record the recursive steps and determine the final
associated family without explicitly writing the
intermediate families. We first consider the even
polygonal-number case and then extend the procedure
to the odd and centered cases.

\subsection{Symbolic notation and decoding}
\label{subsec:symbolic-codes}

We first define the polynomials represented by the
symbolic entries. 
Recall $\alpha$ and $\gamma$ from
Subsection~\ref{subsec:initial-associated-families}. Set
\begin{equation}
Q_0(Y_0)=Y_0^q-Y_0,
\label{eq:code-Q0}
\end{equation}
and, for $s\geq1$, define
\begin{equation}
Q_s(Y_0,\ldots,Y_s)
=
\alpha(Y_0,Y_s)
+
\gamma(Y_0)Q_{s-1}(Y_1,\ldots,Y_s).
\label{eq:code-Q-recursion}
\end{equation}

Recall $\beta$ from \eqref{eq:beta}.
Set
\begin{equation}
\beta_1(Y_0,Y_1)
=
\beta(Y_0,Y_1)
=
\alpha(Y_0,Y_1)+\gamma(Y_0)Y_1,
\label{eq:code-beta1}
\end{equation}
and, for $s\geq2$, define
\begin{equation}
\beta_s(Y_0,\ldots,Y_s)
=
\alpha(Y_0,Y_s)
+
\gamma(Y_0)\beta_{s-1}(Y_1,\ldots,Y_s).
\label{eq:code-beta-recursion}
\end{equation}

For $a\in I_2=\{4,6,8,10\}$, recall
$\Omega$ and $\ell_j$ from
\eqref{eq:Omega} and \eqref{eq:ell-j}, respectively.
Set
\begin{equation}
\sigma_{a,1}(Y_0,Y_1)
=
\Omega(Y_0,Y_1)
\prod_{j=0}^{(a-4)/2}\ell_j(Y_0,Y_1).
\label{eq:code-seed-polynomials}
\end{equation}
Thus $\sigma_{a,1}$ is the second polynomial of
$\mathcal F^{(a)}$ in \eqref{eq:initial-Fm}.
For $s\geq2$, define
\begin{equation}
\sigma_{a,s}(Y_0,\ldots,Y_s)
=
\alpha(Y_0,Y_s)
+
\gamma(Y_0)\sigma_{a,s-1}(Y_1,\ldots,Y_s).
\label{eq:code-sigma-recursion}
\end{equation}

We introduce the symbolic notation in
Table~\ref{tab:code-entries}. For an entry
at position $t+1$, the bracketed indices, if any,
specify the intermediate variables, while $Y_t$
is the last variable of the corresponding polynomial.

\begin{table}[htbp]
\centering
\renewcommand{\arraystretch}{1.4}
\begin{tabular}{|c|c|c|}
\hline
\textbf{Position}
&
\textbf{Code entry}
&
\textbf{Polynomial represented}
\\
\hline
$1$
&
$Q_0$
&
$Q_0(Y_0)$
\\
\hline
$t+1$
&
$Q$
&
$Q_1(Y_0,Y_t)$
\\
\hline
$t+1$
&
$Q[1,\ldots,s]$
&
$Q_{s+1}(Y_0,Y_1,\ldots,Y_s,Y_t)$
\\
\hline
$t+1$
&
$\beta$
&
$\beta_1(Y_0,Y_t)$
\\
\hline
$t+1$
&
$\beta[1,\ldots,s]$
&
$\beta_{s+1}(Y_0,Y_1,\ldots,Y_s,Y_t)$
\\
\hline
$t+1$
&
$\sigma_a$
&
$\sigma_{a,1}(Y_0,Y_t)$
\\
\hline
$t+1$
&
$\sigma_a[1,\ldots,s]$
&
$\sigma_{a,s+1}(Y_0,Y_1,\ldots,Y_s,Y_t)$
\\
\hline
\end{tabular}
\caption{Interpretation of symbolic entries in
a code of size $n$, where $1\leq t\leq n-1$,
$a\in I_2$, and $1\leq s<t$ for bracketed entries.}
\label{tab:code-entries}
\end{table}

\begin{defi}
\label{def:symbolic-code}
A symbolic code of size $n\geq2$ is an ordered tuple
\begin{equation}
\C=(c_1,\ldots,c_n),
\label{eq:def-symbolic-code}
\end{equation}
where $c_1=Q_0$ and, for $2\leq i\leq n$,
$c_i$ is an entry from Table~\ref{tab:code-entries}
satisfying the restrictions on its position and indices.

Define the decoding map by
\begin{equation}
\operatorname{Dec}(\C)=(F_1,\ldots,F_n),
\label{eq:code-decoding}
\end{equation}
where $F_i$ is the polynomial represented by $c_i$
in Table~\ref{tab:code-entries}.
\end{defi}

\subsection{Operations on symbolic codes}
\label{subsec:code-operations}

We first determine how the operator $L$ acts on
the polynomials represented by symbolic entries.

\begin{lemma}
\label{lem:symbolic-entry-lift}
Under the operator $L$ of
Definition~\ref{def:L-operation}, the polynomial
$Q_0(Y_0)$ is lifted to $Q_1(Y_0,Y_1)$.

Let $R$ denote $Q$, $\beta$, or $\sigma_a$,
where $a\in I_2$. For $0\leq s<t$, the
polynomial
$
R_{s+1}(Y_0,\ldots,Y_s,Y_t)
$
at position $t+1$ is lifted to
$
R_{s+2}(Y_0,\ldots,Y_{s+1},Y_{t+1})
$
at position $t+2$.
\end{lemma}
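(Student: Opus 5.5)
The plan is to unwind Definition~\ref{def:L-operation} entry by entry. Under $L$, the $i$-th polynomial $F_i$ of $\mathcal F$ becomes the $(i+1)$-th polynomial of $L(\mathcal F)$, namely $\alpha(Y_0,Y_i)+\gamma(Y_0)\widehat F_i(Y_1,\ldots,Y_i)$. Here $\widehat F_i$ is obtained by the shift $Y_j\mapsto Y_{j+1}$. So for each symbolic entry it suffices to compute the shifted polynomial and compare the resulting expression with the recursions \eqref{eq:code-Q-recursion}, \eqref{eq:code-beta-recursion}, \eqref{eq:code-sigma-recursion}.

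\textbf{First claim.} $Q_0(Y_0)=Y_0^q-Y_0$ sits at position $1$, so $i=1$. Its shift is $\widehat F_1=Q_0(Y_1)$, and the new second entry is $\alpha(Y_0,Y_1)+\gamma(Y_0)Q_0(Y_1)$. This is exactly $Q_1(Y_0,Y_1)$ by \eqref{eq:code-Q-recursion} with $s=1$.

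\textbf{General claim.} Let $R\in\{Q,\beta,\sigma_a\}$, and consider $F_{t+1}=R_{s+1}(Y_0,\ldots,Y_s,Y_t)$ at position $t+1$, so $i=t+1$. The shift gives $\widehat F_{t+1}=R_{s+1}(Y_1,\ldots,Y_{s+1},Y_{t+1})$. The new entry at position $t+2$ is therefore
\[
\alpha(Y_0,Y_{t+1})+\gamma(Y_0)R_{s+1}(Y_1,\ldots,Y_{s+1},Y_{t+1}).
\]

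The key observation is that the recursions defining $Q_{s+2}$, $\beta_{s+2}$ and $\sigma_{a,s+2}$ depend only on the positional structure of the arguments. Formally, they are polynomial identities in which the last argument plays the role of ``$Y_s$'', and they remain valid after substituting arbitrary variables. Substituting $(Y_0,Y_1,\ldots,Y_{s+1},Y_{t+1})$ for the $s+3$ formal arguments of $R_{s+2}$, the recursion reads
\[
R_{s+2}(Y_0,\ldots,Y_{s+1},Y_{t+1})=\alpha(Y_0,Y_{t+1})+\gamma(Y_0)R_{s+1}(Y_1,\ldots,Y_{s+1},Y_{t+1}).
\]
This matches the displayed expression above.

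\textbf{Edge case.} For $s=0$ the entry is $R_1(Y_0,Y_t)$, where $R_1$ is $Q_1$, $\beta_1$ or $\sigma_{a,1}$. These are base cases rather than recursions, so their definitions must be used only as polynomials in two formal arguments. The shift gives $R_1(Y_1,Y_{t+1})$. Then $\alpha(Y_0,Y_{t+1})+\gamma(Y_0)R_1(Y_1,Y_{t+1})$ equals $R_2(Y_0,Y_1,Y_{t+1})$ by the recursion with $s=2$. For $\beta$ and $\sigma_a$ the recursion starts at $s=2$, which is exactly what is needed here.

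\textbf{Main obstacle.} The only delicate point is bookkeeping: one must be sure that the recursions are read as identities of polynomials in formal arguments (the last slot standing for $Y_s$), so that the substitution $Y_s\mapsto Y_{t+1}$ is legitimate. One must also check the index constraint. The condition $s<t$ becomes $s+1<t+1$, so the lifted entry again satisfies the restrictions of Table~\ref{tab:code-entries} at position $t+2$.
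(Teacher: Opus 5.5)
Your proposal is correct and follows the paper's proof: both read off the lifted entry $\alpha(Y_0,Y_{t+1})+\gamma(Y_0)R_{s+1}(Y_1,\ldots,Y_{s+1},Y_{t+1})$ from Definition~\ref{def:L-operation} and identify it with $R_{s+2}(Y_0,\ldots,Y_{s+1},Y_{t+1})$ using \eqref{eq:code-Q-recursion}, \eqref{eq:code-beta-recursion}, or \eqref{eq:code-sigma-recursion}. Your separate $s=0$ paragraph is already covered by the general argument, and $Q_1$ is not a base case (it comes from the recursion applied to $Q_0$), but neither point affects correctness.
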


\begin{proof}
By Definition~\ref{def:L-operation},
the lift of $Q_0(Y_0)$ is
$$
\alpha(Y_0,Y_1)+\gamma(Y_0)Q_0(Y_1)
=
Q_1(Y_0,Y_1),
$$
by \eqref{eq:code-Q-recursion}.

For the remaining polynomials, the lift of
$R_{s+1}(Y_0,\ldots,Y_s,Y_t)$ is
\begin{align*}
&\alpha(Y_0,Y_{t+1})
+\gamma(Y_0)
R_{s+1}(Y_1,\ldots,Y_{s+1},Y_{t+1})\\
&\qquad=
R_{s+2}(Y_0,\ldots,Y_{s+1},Y_{t+1}),
\end{align*}
by \eqref{eq:code-Q-recursion},
\eqref{eq:code-beta-recursion}, or
\eqref{eq:code-sigma-recursion},
according to the choice of $R$.
\end{proof}

Hence each existing entry moves one position
forward, and its polynomial representation
acquires one additional intermediate variable.
These transformations are summarized in
Table~\ref{tab:code-lift}.

\begin{table}[htbp]
\centering
\renewcommand{\arraystretch}{1.4}
\begin{tabular}{|c|c|}
\hline
\textbf{Original entry}
&
\textbf{Entry after applying $L$}
\\
\hline
$Q_0$ & $Q$\\
\hline
$Q$ & $Q[1]$\\
\hline
$Q[1,\ldots,s]$ & $Q[1,\ldots,s+1]$\\
\hline
$\beta$ & $\beta[1]$\\
\hline
$\beta[1,\ldots,s]$
& $\beta[1,\ldots,s+1]$\\
\hline
$\sigma_a$ & $\sigma_a[1]$\\
\hline
$\sigma_a[1,\ldots,s]$
& $\sigma_a[1,\ldots,s+1]$\\
\hline
\end{tabular}
\caption{Effect of $L$ on existing symbolic
entries, where $s\geq1$ and $a\in I_2$.}
\label{tab:code-lift}
\end{table}

In addition, $L$
introduces $Q_0$ as the first entry.
The operator $E$ adds a new entry $\beta$,
as in Definition~\ref{def:extension}.
We now define the corresponding operations
on symbolic codes.

\begin{defi}
\label{def:code-operations}
Let $\C=(c_1,\ldots,c_n)$ be a symbolic code.
For $1\leq i\leq n$, let $L(c_i)$ denote the
entry specified by Table~\ref{tab:code-lift}.
Define
\begin{equation}
L(\C)
=
\bigl(Q_0,L(c_1),\ldots,L(c_n)\bigr).
\label{eq:code-L}
\end{equation}

Define
$$
E(\C)=(c_1,\ldots,c_n,\beta).
$$
For $\ell\geq1$,
\begin{equation}
E^\ell(\C)
=
\bigl(
c_1,\ldots,c_n,
\underbrace{\beta,\ldots,\beta}_{\ell\text{ entries}}
\bigr),
\label{eq:code-E-ell}
\end{equation}
with $E^0(\C)=\C$.
\end{defi}

\begin{lemma}
\label{lem:code-compatibility}
For every symbolic code $\C$,
\begin{equation}
\begin{aligned}
\operatorname{Dec}(L(\C))
&=L(\operatorname{Dec}(\C)),\\
\operatorname{Dec}(E(\C))
&=E(\operatorname{Dec}(\C)).
\end{aligned}
\label{eq:code-compatibility}
\end{equation}
\end{lemma}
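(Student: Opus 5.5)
The plan is to compare the two sides entry by entry, using the decoding rules of Table~\ref{tab:code-entries} together with Lemma~\ref{lem:symbolic-entry-lift}. Write $\C=(c_1,\ldots,c_n)$ and $\operatorname{Dec}(\C)=(F_1,\ldots,F_n)$, where $F_{t+1}$ is the polynomial represented by $c_{t+1}$ at position $t+1$.

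I begin with the easier identity, the one for $E$. By Definition~\ref{def:code-operations}, $E(\C)=(c_1,\ldots,c_n,\beta)$. The first $n$ entries keep their positions, so they decode to $F_1,\ldots,F_n$. The new entry $\beta$ sits at position $n+1$, so $t=n$, and Table~\ref{tab:code-entries} decodes it to $\beta_1(Y_0,Y_n)=\beta(Y_0,Y_n)$ by \eqref{eq:code-beta1}. This is exactly the polynomial that Definition~\ref{def:extension} appends, so $\operatorname{Dec}(E(\C))=E(\operatorname{Dec}(\C))$.

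For $L$, I compare the two families position by position.

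\emph{First entry.} The first entry of $L(\C)$ is $Q_0$, which decodes to $Y_0^q-Y_0$ by \eqref{eq:code-Q0}. This matches the first polynomial of $L(\operatorname{Dec}(\C))$ in Definition~\ref{def:L-operation}.

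\emph{Later entries.} For $1\le i\le n$, the entry at position $i+1$ of $L(\C)$ is $L(c_i)$. The corresponding polynomial of $L(\operatorname{Dec}(\C))$ is
$$\alpha(Y_0,Y_i)+\gamma(Y_0)\widehat F_i(Y_1,\ldots,Y_i).$$
I split into cases according to the form of $c_i$.

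\begin{enumerate}
\item If $i=1$, then $c_1=Q_0$. Lemma~\ref{lem:symbolic-entry-lift} gives $Q_1(Y_0,Y_1)$, and Table~\ref{tab:code-entries} decodes $L(c_1)=Q$ at position $2$ (so $t=1$) to $Q_1(Y_0,Y_1)$. These agree.
\item If $i\ge2$, write $c_i=R$ or $c_i=R[1,\ldots,s]$ with $R\in\{Q,\beta,\sigma_a\}$, and set $t=i-1$. Treat the unbracketed entry as the case $s=0$, so that $F_i=R_{s+1}(Y_0,\ldots,Y_s,Y_t)$. Lemma~\ref{lem:symbolic-entry-lift} says the lift equals $R_{s+2}(Y_0,\ldots,Y_{s+1},Y_{t+1})$. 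By Table~\ref{tab:code-lift}, $L(c_i)=R[1,\ldots,s+1]$ at position $t+2$, and Table~\ref{tab:code-entries} decodes it to the same polynomial.
\end{enumerate}

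One point needs a careful check: $\widehat F_i$ shifts every variable by one, including the last one $Y_t\mapsto Y_{t+1}$. This is what Lemma~\ref{lem:symbolic-entry-lift} uses. I will also verify that the new code still satisfies the index restriction of Table~\ref{tab:code-entries}. Since $s<t$ gives $s+1<t+1$, the entry $L(c_i)$ is admissible at its new position.

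The main obstacle is the bookkeeping of indices, not any new idea. I expect to spend the most care on two things. The first is the convention that the unbracketed entries $Q$, $\beta$, $\sigma_a$ correspond to $s=0$, under which $\sigma_a$ at position $t+1$ means $\sigma_{a,1}(Y_0,Y_t)$. The second is checking that the recursions \eqref{eq:code-Q-recursion}, \eqref{eq:code-beta-recursion}, \eqref{eq:code-sigma-recursion} are applied with the last variable substituted correctly. Once these conventions are fixed, both identities follow directly.
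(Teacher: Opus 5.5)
Your proposal is correct and follows the same route as the paper. The $L$-identity is checked entry by entry via Lemma~\ref{lem:symbolic-entry-lift} and the lift table, and the $E$-identity by noting that the appended $\beta$ at position $n+1$ decodes to $\beta(Y_0,Y_n)$. Your $s=0$ convention for unbracketed entries and the admissibility check ($s<t \Rightarrow s+1<t+1$) are details the paper leaves implicit.
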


\begin{proof}
The first identity follows from
Lemma~\ref{lem:symbolic-entry-lift}
and \eqref{eq:code-L}.
The second follows from
Definition~\ref{def:extension}, since
$E$ adds $\beta(Y_0,Y_n)$, represented
by $\beta$ at position $n+1$.
\end{proof}

Therefore, the operations on symbolic codes agree with
the corresponding operations on polynomial
families.

\subsection{Recursive codes and correspondence}
\label{subsec:code-construction}

We now assign codes to the associated families
$\mathcal F^{(m)}$ from
Definition~\ref{def:recursive-even-family}.

\begin{defi}
\label{def:recursive-symbolic-codes}
For $a\in I_2=\{4,6,8,10\}$, define
\begin{equation}
\C(a)=(Q_0,\sigma_a).
\label{eq:initial-codes}
\end{equation}

For $m\in I_n$ with $n\geq3$, define recursively
\begin{equation}
\C(m)
=
E^{\nu(m)}
\left(
L\bigl(\C(\Par(m))\bigr)
\right).
\label{eq:recursive-code}
\end{equation}
\end{defi}

By Lemma~\ref{lem:parent-termination},
Definition~\ref{def:recursive-symbolic-codes}
determines $\C(m)$ for every even integer $m\geq4$.

For the parent chain
$$
m=m_t\longmapsto m_{t-1}\longmapsto\cdots
\longmapsto m_1\longmapsto m_0,
$$
where $m_0\in I_2$, iterating
\eqref{eq:recursive-code} gives
\begin{equation}
\C(m)
=
\left(E^{\nu(m_t)}\circ L\right)
\circ\cdots\circ
\left(E^{\nu(m_1)}\circ L\right)
\bigl(\C(m_0)\bigr).
\label{eq:code-parent-recursion}
\end{equation}

\begin{prop}
\label{prop:code-correspondence}
For every even integer $m\geq4$,
\begin{equation}
\operatorname{Dec}(\C(m))
=
\mathcal F^{(m)}.
\label{eq:code-family-correspondence}
\end{equation}
\end{prop}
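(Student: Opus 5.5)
We argue by induction along the parent chain of $m$, in the same way as Theorem~\ref{thm:even-polygonal-realization}, using Lemma~\ref{lem:code-compatibility} to move the decoding map past each operator.

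\emph{Base case $m\in I_2=\{4,6,8,10\}$.} By \eqref{eq:initial-codes}, $\C(m)=(Q_0,\sigma_m)$. By Table~\ref{tab:code-entries} and Definition~\ref{def:symbolic-code}, $\operatorname{Dec}(\C(m))=(Q_0(Y_0),\sigma_{m,1}(Y_0,Y_1))$. Here $Q_0(Y_0)=Y_0^q-Y_0$ is the first polynomial $Q(Y_0)$ of \eqref{eq:initial-Fm}, which vanishes identically on $\mathbb F_q$. By \eqref{eq:code-seed-polynomials}, $\sigma_{m,1}=\sigma_m$. Hence $\operatorname{Dec}(\C(m))=\mathcal F^{(m)}$. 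The only point to check is that the position-$2$ entry $\sigma_a$ is read with $t=1$, so it represents $\sigma_{a,1}(Y_0,Y_1)$ and not a polynomial in other variables.

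\emph{Inductive step.} Let $m\in I_n$ with $n\geq3$, and assume $\operatorname{Dec}(\C(\Par(m)))=\mathcal F^{(\Par(m))}$. This is legitimate because the parent chain terminates (Lemma~\ref{lem:parent-termination}); equivalently, induct on $n(m)$, since $\Par(m)\in I_r$ with $r<n$. We need the operator identity
$$
\operatorname{Dec}\bigl(E^{\ell}(L(\C))\bigr)=E^{\ell}\bigl(L(\operatorname{Dec}(\C))\bigr).
$$
Apply the first identity of Lemma~\ref{lem:code-compatibility} once. Then apply the second identity $\ell$ times, by induction on $\ell$ using $E^{\ell+1}=E\circ E^{\ell}$ for both codes and families. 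Taking $\ell=\nu(m)$ and using \eqref{eq:recursive-code} together with Definition~\ref{def:recursive-even-family} gives
$$
\operatorname{Dec}(\C(m))=E^{\nu(m)}\bigl(L(\mathcal F^{(\Par(m))})\bigr)=\mathcal F^{(m)}.
$$

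\emph{Main obstacle.} The only nontrivial point is that Lemma~\ref{lem:code-compatibility} applies only to valid symbolic codes. We must check that $L(\C)$ and $E(\C)$ are again symbolic codes in the sense of Definition~\ref{def:symbolic-code}, with entries meeting the position and index restrictions of Table~\ref{tab:code-entries}. For $L$, an entry $R[1,\ldots,s]$ at position $t+1$ with $s<t$ becomes $R[1,\ldots,s+1]$ at position $t+2$, and $s+1<t+1$ still holds. The entry $Q_0$ moves to position $2$ as $Q$, and a new $Q_0$ is placed in position $1$. For $E$, the appended entry $\beta$ is unbracketed, so no restriction arises. We also confirm that the lift formula of Lemma~\ref{lem:symbolic-entry-lift} for an unbracketed entry, the case $s=0$, matches $L$ of Definition~\ref{def:L-operation}: the shift $Y_j\mapsto Y_{j+1}$ sends $R_1(Y_0,Y_t)$ to $R_1(Y_1,Y_{t+1})$, which produces $R_2(Y_0,Y_1,Y_{t+1})$. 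Once validity is preserved, induction over the parent chain completes the proof.
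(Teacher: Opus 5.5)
Your proof is correct and follows the paper's own argument. It takes the base case from \eqref{eq:initial-Fm} and \eqref{eq:initial-codes}, then inducts along the parent chain, commuting $\operatorname{Dec}$ past $L$ and $E^{\nu(m)}$ via Lemma~\ref{lem:code-compatibility}; your extra check that $L$ and $E$ send valid symbolic codes to valid ones, so the lemma can be reapplied, is a detail the paper leaves implicit and you carry it out correctly.
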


\begin{proof}
For $a\in I_2$, the result follows from
\eqref{eq:initial-Fm} and \eqref{eq:initial-codes}.

Suppose it holds for $\Par(m)$. By
Lemma~\ref{lem:code-compatibility},
\begin{align*}
\operatorname{Dec}(\C(m))
&=
E^{\nu(m)}
L\left(\operatorname{Dec}(\C(\Par(m)))\right)\\
&=
E^{\nu(m)}
L\left(\mathcal F^{(\Par(m))}\right)\\
&=\mathcal F^{(m)},
\end{align*}
where the last equality follows from
Definition~\ref{def:recursive-even-family}.
The result follows by induction along the parent chain.
\end{proof}

\subsection{Example and further cases}
\label{subsec:code-example}

\begin{ex}
\label{ex:code-358}
Consider $m=358$. By
Example~\ref{ex:recursive-358}, its parent chain is
$$
358\longmapsto102\longmapsto38\longmapsto6,
$$
where
$$
\nu(38)=2,\qquad
\nu(102)=0,\qquad
\nu(358)=1.
$$

Starting from $\C(6)=(Q_0,\sigma_6)$, we obtain
\begin{align*}
\C(38)
&=E^2\bigl(L(\C(6))\bigr)\\
&=\bigl(Q_0,Q,\sigma_6[1],\beta,\beta\bigr).
\end{align*}
Since $\nu(102)=0$,
\begin{align*}
\C(102)
&=L(\C(38))\\
&=\bigl(
Q_0,Q,Q[1],\sigma_6[1,2],
\beta[1],\beta[1]
\bigr).
\end{align*}
Finally,
\begin{align*}
\C(358)
&=E\bigl(L(\C(102))\bigr)\\
&=\bigl(
Q_0,Q,Q[1],Q[1,2],
\sigma_6[1,2,3],
\beta[1,2],\beta[1,2],\beta
\bigr).
\end{align*}

By Proposition~\ref{prop:code-correspondence},
this code determines the associated family
\begin{equation}
\begin{aligned}
\mathcal F^{(358)}
=\bigl(
&Q_0(Y_0),
Q_1(Y_0,Y_1),
Q_2(Y_0,Y_1,Y_2),
Q_3(Y_0,Y_1,Y_2,Y_3),\\
&\sigma_{6,4}(Y_0,Y_1,Y_2,Y_3,Y_4),
\beta_3(Y_0,Y_1,Y_2,Y_5),
\beta_3(Y_0,Y_1,Y_2,Y_6),
\beta_1(Y_0,Y_7)
\bigr).
\end{aligned}
\label{eq:F358-code}
\end{equation}
\end{ex}

\begin{remark}
Let $m\geq3$ be odd. Since $m+1$ is even,
$\C(m+1)$ determines the associated family
$\mathcal F^{(m+1)}$. A corresponding defining
family $\mathcal G^{(m+1)}$ can be chosen by
\eqref{eq:Gi-FR}. By
Definition~\ref{def:odd-family}, the odd
construction is obtained by adjoining the
auxiliary equation $\Theta(X,Y_0)=0$.
Thus no separate code recursion is required
for odd $m$.
\end{remark}

\begin{remark}
The coding procedure extends to the centered
polygonal-number families. Let $\Psi_a$ represent
the second polynomial of $\mathcal F_C^{(a)}$
in \eqref{eq:initial-centered-Fm}, for $a\in I_2$.
Using the same recursive, decoding, and lift rules
as for $\sigma_a$, define
$\C_C(a)=(Q_0,\Psi_a)$ and construct
$\C_C(m)$ by \eqref{eq:recursive-code},
with $\C$ replaced by $\C_C$.
By Definition~\ref{def:recursive-centered-family},
$$
\operatorname{Dec}(\C_C(m))=\mathcal F_C^{(m)}
$$
for every even $m\geq4$.
\end{remark}

\section{Computational resources}
\label{sec:computational-resources}
As $m$ increases, the corresponding polynomial systems become lengthy and difficult to construct and verify by hand. We therefore provide separate SageMath programs for polygonal numbers and for centered polygonal numbers without the central point.

For a given integer $m\geq3$, the appropriate program constructs the corresponding polynomial system and generates the Macaulay2 code used to verify its rational-point count. After executing the code, entering $f(q)$ returns the term indexed by the prime power $q$ in the corresponding $m$-gonal number sequence. Thus, SageMath carries out the construction, while Macaulay2 verifies the resulting point-count formula.

The programs are available at

\begin{center}
\url{https://github.com/Anit-Kuckian/Polygonal-Numbers-Codes.git}
\end{center}

\section{Further directions}
\label{sec:further-directions}

The point-count constructions developed here suggest possible
applications to affine variety codes and related algebraic coding
constructions. We leave the
study of these applications for future work.

\section*{Acknowledgements}
The first author initiated this work while supported by the grant CRG/2022/007047 and later received financial support from the Department of Science and Technology, Government of India, under the WISE Post-Doctoral Fellowship Programme (Reference No.~DST/WISE-PDF/PM-114/2024(G)). The second author carried out part of this work at IIT Gandhinagar as a Sabarmati Bridge Fellow. The third author was also supported by the grant CRG/2022/007047. The authors gratefully acknowledge the support and encouragement received from IIT Gandhinagar.

\section*{Declaration on the Use of AI}
\noindent
The mathematical content of this manuscript, including its ideas, constructions, results, and proofs, was developed by the authors. AI tools were used in a limited supporting role for language refinement and presentation, to assist in generating portions of the SageMath code described in Section~\ref{sec:computational-resources}, and to generate LaTeX code of the figures. All such material was reviewed and verified by the authors.

\bibliographystyle{plain}
\bibliography{ref}

@software{M2,
  author       = {Grayson, D. R. and Stillman, M. E.},
  title        = {{Macaulay2}, a Software System for Research in Algebraic Geometry},
  howpublished = {Available at \url{http://www2.macaulay2.com}},
  url          = {http://www2.macaulay2.com}
}

@article{Gupta2019,
  author  = {Gupta, N. and Sen, S.},
  title   = {On Double {Danielewski} Surfaces and the Cancellation Problem},
  journal = {Journal of Algebra},
  volume  = {533},
  pages   = {25--43},
  year    = {2019},
  doi     = {10.1016/j.jalgebra.2019.05.027}
}

@book{IrelandRosen1990,
  author    = {Ireland, K. and Rosen, M.},
  title     = {A Classical Introduction to Modern Number Theory},
  edition   = {2},
  series    = {Graduate Texts in Mathematics},
  volume    = {84},
  publisher = {Springer},
  address   = {New York},
  year      = {1990},
  doi       = {10.1007/978-1-4757-2103-4}
}

@book{mullen2013handbook,
  title     = {Handbook of Finite Fields},
  author    = {Mullen, Gary L. and Panario, Daniel},
  volume    = {17},
  year      = {2013},
  publisher = {CRC Press},
  address   = {Boca Raton}
}

@book{figurate,
  title={Figurate numbers},
  author={Deza, Michel-marie and Deza, Elena},
  year={2012},
  publisher={World Scientific}
}

@article{ddsfq,
  title={Counting Rational Points on Danielewski and Double Danielewski Surfaces over Finite Fields},
  author={Gupta, Sakshi and Kuckian, Anit and Sengupta, Indranath},
  journal={arXiv preprint arXiv:2605.24821},
  year={2026},
  url={https://arxiv.org/abs/2605.24821}
}

@manual{sagemath,
  Key          = {SageMath},
  Author       = {{The Sage Developers}},
  Title        = {{S}ageMath, the {S}age {M}athematics {S}oftware {S}ystem ({V}ersion 10.9)},
  note         = {{\tt https://www.sagemath.org}},
  Year         = {2026},
}

\end{document}